\documentclass[11pt]{amsart}

\usepackage{amsmath,amssymb,amsthm,mathtools}
\usepackage{hyperref}
\hypersetup{%
  colorlinks,
  bookmarksopen,
  bookmarksnumbered,
  citecolor=blue,
  linkcolor=black,
  pdfstartview=FitH,
}

\usepackage{microtype}
\usepackage{pstricks}
\usepackage{auto-pst-pdf}
\usepackage{pst-node}
\usepackage{microtype}
\usepackage{color}
\newtheorem{lem}{Lemma}[section]

\newtheorem{prop}[lem]{Proposition}
\newtheorem{cor}[lem]{Corollary}
\newtheorem{thm}[lem]{Theorem}

\theoremstyle{definition}
\newtheorem{ques}[lem]{Question}
\newtheorem{defn}[lem]{Definition}
\newtheorem{notn}[lem]{Notation}
\newtheorem{rk}[lem]{Remark}
\numberwithin{equation}{section}
\let\Re\relax\DeclareMathOperator{\Re}{Re}
\let\Im\relax\DeclareMathOperator{\Im}{Im}

\DeclareMathOperator{\df}{tf}
\let\tf\df

\DeclareMathOperator{\conv}{conv}
\DeclareMathOperator{\trace}{trace}

\DeclareMathOperator{\diag}{diag}

\let\epsilon\varepsilon\let\phi\varphi

\newenvironment{smallbmatrix}{\left[\begin{smallmatrix}}{\end{smallmatrix}\right]}

{\let\privatefoo#1\privatefoo[\begin{smallmatrix}}%
  {\end{smallmatrix}\privatefoo]}

\let\sectmark\S
\def\dc#1{\expandafter\def\csname#1\endcsname{\mathcal{#1}}}
\def\db#1{\expandafter\def\csname b#1\endcsname{\mathbb{#1}}}

\def\loopy#1#2{%
  \def#1##1{\def\next{#2{##1}#1}\ifx##1\relax\let\next\relax\fi\next}}

\loopy{\makemathcals}{\dc}\loopy{\makemathbbs}{\db}

\makemathbbs  CRNQZTDFPE\relax
\makemathcals QWERTYUIOPASDFGHJKLZXCVBNM\relax

\makeatletter\newcount\@DT@modctr\newcount\@dtctr
\def\@modulo#1#2{\@DT@modctr=#1\relax\divide \@DT@modctr by #2\relax
\multiply \@DT@modctr by #2\relax\advance #1 by -\@DT@modctr}
\newcommand{\xxivtime}{\@dtctr=\time\divide\@dtctr by 60
\ifnum\@dtctr<10 0\fi\the\@dtctr:\@dtctr=\time\@modulo{\@dtctr}{60}%
\ifnum\@dtctr<10 0\fi\the\@dtctr}

\makeatother

\def\blob{\pscircle*{1.2pt}}%

\def\g#1#2#3{\,%
  \bgroup
  {
    \begin{postscript}\psset{nodesep=0pt,linewidth=.1ex}%
      \ensuremath{{\psmatrix[colsep=1.5ex,rowsep=-1.2ex,mnode=r]%
          \psset{nodesep=0pt,linewidth=.1ex}%
          #1\\#2%
          #3\endpsmatrix\,}}\end{postscript}}\egroup}

\def\gqqq{\g{\blob}{\blob}{\nn11}}
\def\gqe#1{\g{\,&\blob&\,}{\blob&\blob&\blob}{#1}}
\def\gqr#1{\g{\,&\blob&\,&\,}{\blob&\blob&\blob&\blob}{#1}}
\def\gww#1{\g{\blob&\blob}{\blob&\blob}{#1}}
\def\gwe#1{\g{\blob&\blob}{\blob&\blob&\blob}{#1}}
\def\gwr#1{\g{\blob&\blob}{\blob&\blob&\blob&\blob}{#1}}
\def\gee#1{\g{\blob&\blob&\blob}{\blob&\blob&\blob}{#1}}
\def\get#1{\g{\,&\blob&\blob&\blob}{\blob&\blob&\blob&\blob&\blob}{#1}}
\def\gte#1{\g{\blob&\blob&\blob&\blob&\blob}{\,&\blob&\blob&\blob}{#1}}
\def\ger#1{\g{\blob&\blob&\blob}{\blob&\blob&\blob&\blob}{#1}}
\def\grr#1{\g{\blob&\blob&\blob&\blob}{\blob&\blob&\blob&\blob}{#1}}
\def\gtr#1{\g{\blob&\blob&\blob&\blob&\blob}{\blob&\blob&\blob&\blob}{#1}}
\def\grt#1{\g{\blob&\blob&\blob&\blob}{\blob&\blob&\blob&\blob&\blob}{#1}}
\def\gre#1{\g{\blob&\blob&\blob&\blob}{\blob&\blob&\blob}{#1}}

\def\nn#1#2{\ncline{1,#1}{2,#2}}

\def\kqe{\gqe{\nn21\nn22\nn23}}

\def\gwwq{\gww{\nn11\nn12\nn22}}
\let\snakeww\gwwq
\def\vsnakewe{\nn11\nn12\nn22\nn23}
\def\gweq{\gwe{\vsnakewe}}
\let\snakewe\gweq

\def\geeq{\gee{\nn11\nn12\nn22\nn23\nn33}}
\let\snakeee\geeq
\def\veew{\nn11\nn21\nn22\nn23\nn33}
\def\geew{\gee{\veew}}
\def\geee{\gee{\nn11\nn12\nn21\nn22\nn23\nn32}}
\let\trie\geee
\def\geer{\gee{\nn11\nn12\nn21\nn22\nn23\nn32\nn33}}
\def\geet{\gee{\nn11\nn12\nn21\nn23\nn32\nn33}}
\let\loopee\geet

\def\snakeer{\ger{\nn11\nn12\nn22\nn23\nn33\nn34}}

\def\grew{\gre{\nn11\nn21\nn22\nn32\nn23\nn43}}

\def\looprr{\grr{\nn11\nn12\nn21\nn23\nn32\nn34\nn43\nn44}}

\def\snakerr{\grr{\nn11\nn12\nn22\nn23\nn33\nn34\nn44}}
\def\snakert{\grt{\nn11\nn12\nn22\nn23\nn33\nn34\nn44\nn45}}

\def\vrrz{\nn11\nn12\nn22\nn32\nn33\nn34\nn44}
\def\grrz{\grr{\vrrz}}
\def\vrrx{\veew\nn34\nn44}
\def\grrx{\grr{\vrrx}}
\def\grrc{\grr{\veew\nn24\nn44}}
\def\grrr{\grr{\nn11\nn21\nn22\nn32\nn23\nn43\nn44}}

\def\getq{\get{\nn21\nn22\nn32\nn33\nn34\nn44\nn45}}

\def\schur{\bullet}

\newcommand{\hsnorm}[1]{%
  \left\vert\kern-0.9pt\left\vert\kern-0.9pt\left\vert #1
      \right\vert\kern-0.9pt\right\vert\kern-0.9pt\right\vert}

\title{Norms of idempotent Schur multipliers}
\author{Rupert H. Levene}%
\email{rupert.levene@ucd.ie}%
\address{School of Mathematical Sciences\\University College Dublin\\Belfield\\Dublin~4\\Ireland}

\keywords{idempotent Schur multiplier, normal masa bimodule
  map, Hadamard product, norm, bipartite graph}
\subjclass{47A30, 15A60, 05C50}

\begin{document}

\begin{abstract}
  Let~$\D$ be a masa in~$\B(\H)$ where~$\H$ is a separable Hilbert
  space.  We find real %
  numbers~$\eta_0<\eta_1<\eta_2<\dots<\eta_6$ so that for every
  bounded, normal~$\D$-bimodule map~$\Phi$ on~$\B(\H)$,
  either~$\|\Phi\|>\eta_6$ or~$\|\Phi\|=\eta_k$ for some $k\in
  \{0,1,2,3,4,5,6\}$. When~$\D$ is totally atomic, these maps are the
  idempotent Schur multipliers and we characterise those with
  norm~$\eta_k$ for~$0\leq k\leq 6$. We also show that the Schur
  idempotents which keep only the diagonal and superdiagonal of
  an~$n\times n$ matrix, or of an~$n\times (n+1)$ matrix, both have
  norm~$\frac2{n+1}\cot(\frac\pi{2(n+1)})$, and we consider the
  average norm of a random idempotent Schur multiplier as a function
  of dimension. Many of our arguments are framed in the combinatorial
  language of bipartite
  graphs.  %
\end{abstract}
\maketitle
\tableofcontents

\section{Introduction}\label{sec:intro}

Let~$\bF$ be either $\bR$ or~$\bC$, and
let~$m,n\in\bN\cup\{\aleph_0\}$. If~$A=[a_{ij}]$ and~$X=[x_{ij}]$ are
$m\times n$ matrices with entries in~$\bF$, then the Schur product
of~$A$ and~$X$ is their entrywise product:
\[ A\schur X=[a_{ij}x_{ij}].\] This is also known as the Hadamard
product. Let~$\B=\B(\ell^2_n,\ell^2_m)$ be the space of matrices
defining bounded linear operators $\ell^2_n\to\ell^2_m$, where
$\ell^2_k$ is the $k$-dimensional Hilbert space of square-summable
$\bF$-valued sequences. An $m\times n$ matrix~$A$ with entries
in~$\bF$ is called a Schur multiplier if~$X\mapsto A\schur X$
leaves~$\B$ invariant. It then follows that Schur multiplication
by~$A$ defines a bounded linear map~$\B\to \B$, so the Schur norm
of~$A$ given by
\[ \|A\|_\schur=\sup\{ \|A\schur X\|_\B\colon {{X\in\B,\ \|X\|_\B\leq
    1}}\}\] is finite.  Under matrix addition, the Schur product
$\bullet$ and the norm $\|\cdot\|_\bullet$, the set of all $m\times n$
Schur multipliers forms a unital commutative semisimple Banach
algebra. Several properties of Schur multipliers and the
norm~$\|\cdot\|_{\bullet}$ are known; see for
example~\cite{bennett,mathias,dav-don}. Here, we focus on the norms of
the idempotent elements of this algebra: those Schur multipliers~$A$
for which every entry of~$A$ is either~$0$ or~$1$.

If~$S\subseteq \bF$, then we write $M_{m,n}(S)$ for the set of
all~$m\times n$ matrices with entries in~$S$.  For $m,n\in\bN$,
consider the finite sets of non-negative real numbers
\[ \N(m,n)=\{\|A\|_\schur \colon A\in M_{m,n}(\{0,1\})\}.\] We will
see in Remark~\ref{rk:real-complex} below that this set does not
depend on whether~$\bF=\bR$ or~$\bF=\bC$. Adding rows or columns of
zeros to a matrix does not change its Schur norm, so if $n\leq n'$ and
$m\leq m'$, then $\N(m,n)\subseteq \N(m',n')$. %
We will be interested in the set
\[ \N=\N(\aleph_0,\aleph_0)\] consisting of the norms of all
idempotent Schur multipliers on~$\B(\ell^2)$.
Every element of~$\N$ is the supremum of a sequence
in~$\bigcup_{m,n\in\bN}\N(m,n)$, obtained by considering the Schur
norms of the upper-left hand corners of the corresponding infinite
$0$--$1$ matrix.

It has been known for some time that~$\N$ is closed under
multiplication (consider $A_1\otimes A_2$) and under suprema (consider
$\bigoplus_{i} A_i$), that $\N$ is not bounded
above~\cite{kwapien-pel} and that~$\N$ contains accumulation
points~\cite{bcd}. %
On the other hand, many basic properties of~$\N$ seem to be
unknown. For example: is $\N$ closed?  Does $\N$ have non-empty
interior?  Might we have $\N \supseteq [a,\infty)$ for some $a\geq 0$?
Or, in the opposite direction, is~$\N$ actually countable?

We say that a non-empty open interval $(a,b)$ is a gap in~$\N$ if
$a,b\in \N$ but $(a,b)\cap \N=\emptyset$.  The idempotent elements~$p$
of any Banach algebra satisfy \[\|p\|=\|p^2\|\leq \|p\|^2,\] so
if~$\|p\|\leq 1$ then~$\|p\|\in \{0,1\}$.  In particular, this shows
that $(0,1)$ is a gap in~$\N$. However, $\N$ contains further gaps, a
perhaps unexpected phenomenon.
Indeed, Livschits~\cite{livschits} proves that
\[\{0,1,\sqrt{4/3}\}\subseteq \N\subseteq \{0,1\}\cup
[\sqrt{4/3},\infty),\] so the open interval~$(1,\sqrt{4/3})$ is also a
gap in~$\N$. Livschits' theorem has since been generalised by
Katavolos and Paulsen~\cite{kat-paulsen}, and has been recently used by
Forrest and Runde to describe certain ideals of the Fourier algebra of
a locally compact group~\cite{forrest-runde}.

We will show that there are at least four further
gaps:
\begin{thm}\label{thm:gaps}
  Consider the real numbers $\eta_0<\eta_1<\eta_2<\eta_3<\eta_4<\eta_5<\eta_6$ given by 
  \begin{gather*}
  \eta_0=0,\quad
  \eta_1=1,\quad
  \eta_2=\sqrt{\frac43},\quad
  \eta_3=\frac{1+\sqrt2}2,\\
  \eta_4=\frac1{15}\sqrt{169+38\sqrt{19}},\quad
  \eta_5=\sqrt{\frac 32},\quad
  \eta_6=\frac25\sqrt{5+2\sqrt5}.
\end{gather*}
We have
  \[
  \{\eta_0,\eta_1,\eta_2,\eta_3,\eta_4,\eta_5,\eta_6\}\subseteq \N\subseteq\{\eta_0,\eta_1,\eta_2,\eta_3,\eta_4,\eta_5\}\cup [\eta_6,\infty),\]
  so $(\eta_{j-1},\eta_{j})$ is a gap in~$\N$ for $1\leq j\leq 6$.
\end{thm}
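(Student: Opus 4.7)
The plan is to identify each idempotent Schur multiplier $A\in M_{m,n}(\{0,1\})$ with its bipartite support graph $G(A)$ (whose edges record the positions of the $1$'s) and to exploit two basic monotonicity facts: for any submatrix $A'$ of $A$, $\|A'\|_\bullet\le\|A\|_\bullet$; and the norm is unchanged by permuting rows/columns, adjoining rows/columns of zeros, or taking block diagonal sums (where $\|A\oplus A'\|_\bullet=\max(\|A\|_\bullet,\|A'\|_\bullet)$). Combined with the description of elements of $\N$ as suprema of finite corners, this reduces the theorem to a combinatorial statement about finite connected bipartite graphs.

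For the containment $\{\eta_0,\ldots,\eta_6\}\subseteq\N$, I would exhibit explicit bipartite graphs realising each value. Four of them are already accounted for by the result advertised in the abstract: $\eta_1,\eta_2,\eta_3,\eta_6$ are precisely $\frac{2}{n+1}\cot\frac{\pi}{2(n+1)}$ for $n=1,2,3,4$, realised by the $n\times n$ or $n\times(n+1)$ ``diagonal plus superdiagonal'' idempotent whose norm is computed directly for each small $n$. For $\eta_4$ and $\eta_5$ I would search for small exceptional bipartite graphs: $\eta_5^2=3/2$ and $\eta_4^2=(169+38\sqrt{19})/225$ should each arise as the operator norm squared of a concrete small matrix (or equivalently as the largest eigenvalue of a small positive-semidefinite matrix), so one can display the relevant graphs and verify the norms by diagonalising $AA^T$ by hand or by exhibiting an optimal Gram factorisation.

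The heart of the proof is the upper bound $\N\cap[0,\eta_6)\subseteq\{\eta_0,\ldots,\eta_5\}$, which I would handle by processing the six gaps in turn. Given an idempotent $A$ with $\|A\|_\bullet<\eta_k$, reduce to a minimal ``core'' graph $H$ in which no vertex can be deleted without strictly decreasing the norm. For each gap $(\eta_{k-1},\eta_k)$ the goal is to identify a short list of ``witness'' bipartite graphs whose Schur norms exceed $\eta_k$; by submatrix monotonicity such witnesses are forbidden as subgraphs of any core with norm below $\eta_k$. A case analysis — typically exploiting that a degree-one pendant vertex with the right neighbour can be excised without changing the norm, and that low-girth or high-degree local configurations already produce one of the witnesses — should then show that a core avoiding all witnesses is isomorphic to one of an explicit finite list, each member of which realises some $\eta_j$ with $j<k$.

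The main obstacle will be carrying out this forbidden-subgraph analysis for the higher gaps, where the numerical separations are extremely narrow (all four of $\eta_3,\eta_4,\eta_5,\eta_6$ lie in the band $1.20<\eta<1.24$). For those gaps one must guess the correct witness graphs, compute their Schur norms precisely enough to certify they exceed $\eta_k$, and then complete the combinatorial classification of witness-free cores. I expect the computation of $\eta_4$ to be the single hardest step: its awkward algebraic form suggests it appears as the largest root of a low-degree but non-trivial polynomial coming from the smallest exceptional core not already covered by the superdiagonal family, and verifying both that this value is attained and that no other value lies strictly between $\eta_3$ and $\eta_5$ will require the most delicate spectral estimates in the argument.
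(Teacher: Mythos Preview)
Your overall strategy---encode idempotents as bipartite graphs, exhibit explicit small graphs realising each $\eta_k$, and run a forbidden-induced-subgraph case analysis to rule out norms in each interval $(\eta_{k-1},\eta_k)$---is exactly the paper's approach, and your identification of $\eta_1,\eta_2,\eta_3,\eta_6$ with the path family is correct.

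Two places where your sketch drifts from what actually works. First, the reduction you want is not to a ``minimal core where no vertex can be deleted without decreasing the norm'': such minima are neither unique nor easy to reason about (e.g.\ $\Sigma(n,n)<\Sigma(n,n+1)<\Lambda(n+1)$ can all share a norm). The paper instead passes to the \emph{twin-free} quotient (delete duplicate rows/columns), which is canonical and preserves the norm; the case analysis is then carried out on connected twin-free graphs, using the fact that two twins in an induced subgraph of a twin-free $G$ must be distinguished by some further vertex of $G$. Your ``pendant vertex can be excised'' heuristic is not used and is not generally valid. Second, the remark about ``diagonalising $AA^T$'' is a red flag: the Schur multiplier norm $\|A\|_\bullet$ has nothing to do with the spectrum of $AA^T$ (the all-ones matrix already shows this). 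The paper computes each exact norm by producing both a Haagerup factorisation $A=S^*R$ giving $\|A\|_\bullet\le c(S)c(R)$ and an explicit (co)isometry $U$ with $\|A\bullet U\|$ matching; for the witness graphs used to close the higher gaps one only needs the lower bound, obtained from a single well-chosen test matrix. With those two corrections your plan is the paper's plan.
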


Since it is fundamental to many of the calculations that follow, we
recall here the connection between the problem of finding
$\|A\|_{\schur}$ and factorisations $A=S^*R$. If~$m,n\in\bN$ and $A\in
M_{m,n}(\bC)$, the well-known Haagerup estimate (essentially attributed to
Grothendieck in~\cite{pisier-book}) states
\[ \|A\|_{\schur} \leq \|W\|\,\|V\|\quad\text{where}\quad A\schur
X=\sum_{j=1}^k W_jXV_j\text{ for all $X\in M_{m,n}(\bC)$}.\] Here $k$
is a natural number, $W$ is a block row of $m\times m$ matrices
$W_1,W_2,\dots,W_k$ and $V$ is a block column of $n\times n$ matrices
$V_1,V_2,\dots,V_k$; the norms of $V$ and $W$ are computed by allowing
them to act as linear operators between Hilbert spaces of the
appropriate finite dimensions. Moreover, the norm $\|A\|_{\schur}$ is
the minimum of these estimates $\|W\|\,\|V\|$. Stated in this
generality, the same is true for an arbitrary elementary operator
on~$M_{m,n}(\bC)$; for Schur multipliers, the minimum is attained by a
row $W$ and a column $V$ with $k\leq \min\{m,n\}$ for which the
entries of~$W$ and~$V$ are all \emph{diagonal} matrices. We can then
rewrite the Haagerup estimate in the compact form
\[ \|A\|_{\schur} \leq c(S) c(R) \quad \text{where}\quad A=S^*R\] by
taking $R$ to be the $k\times n$ matrix whose rows %
are the diagonals of the entries of~$V$, and $S$ to be the $k\times m$
matrix whose rows %
are the complex conjugates of the diagonals of the entries of~$W$, and
defining $c(R)$ and $c(S)$ to be the maximum of the $\ell^2$-norms of
the columns of the corresponding matrices~$R$ and~$S$. This notation
comes from~\cite{ang-cow-nar,cowen-et-al}.
\medskip

The structure of this paper is as follows. We will use the
combinatorial language of bipartite graphs to describe idempotent
Schur multipliers, and this is explained in
Section~\ref{sec:bipartite}. Section~\ref{sec:basicresults} briefly
recalls some basic results about the norms of general Schur
multipliers, and casts them in this language. Section~\ref{sec:snakes}
is concerned with the calculation of the norms of the idempotent Schur
multipliers corresponding to simple paths; these are the maps which
keep only the main diagonal and superdiagonal elements of a
matrix. Somewhat unexpectedly, we get the same answer in the $n\times
n$ and the $n\times (n+1)$ cases. In Section~\ref{sec:calcs} we
compute or estimate the norms of some ``small'' idempotent Schur
multipliers. Section~\ref{sec:proofmain} uses these results and simple
combinatorial arguments to characterise the Schur idempotents with
norm~$\eta_k$ for~$1\leq k\leq 6$, and hence to prove
Theorem~\ref{thm:gaps}. Using work of Katavolos and
Paulsen~\cite{kat-paulsen}, this allows us to show in
Section~\ref{sec:cbn} that these gaps persist in the set of norms of
all bounded, normal, idempotent masa bimodule maps on~$\B(\H)$
where~$\H$ is a separable Hilbert space.
Finally, in Section~\ref{sec:random} we estimate the average Schur
norm of a random Schur idempotent, in which each entry is chosen
independently to be~$1$ with probability~$p$ and~$0$ with
probability~$1-p$.

\section{Bipartite graphs}\label{sec:bipartite}

Let~$m,n\in\bN\cup \{\aleph_0\}$, and
consider an $m\times n$ matrix~$A=[a_{ij}]$ where each~$a_{ij}\in
\{0,1\}$. To~$A$ we associate an undirected countable bipartite
graph~$G=G(A)$, specified as follows. The vertex set~$V(G)$ is the
disjoint union of two sets, $R$ and~$C$, where $|R|=m$ and
$|C|=n$. Fixing enumerations $R=\{r_1,r_2,\dots\}$ and
$C=\{c_1,c_2,\dots\}$, we define the edge set of~$G$ to be
\[ E(G)=\big\{ (r_i,c_j) \colon a_{ij}=1\big\}.\] 
For example, if
\[ A=
\begin{bmatrix}
  1&1&0&0\\0&1&1&0\\0&0&0&1
\end{bmatrix},
\]
then the corresponding graph is
\[ G(A)=\ger{\nn11\nn12\nn22\nn23\nn34}\] where we have drawn the set
of ``row vertices'' $R=\{r_1,r_2,r_3\}$ above the ``column vertices''
$\{c_1,c_2,c_3,c_4\}$.  In general, $G$ will be a bipartite graph with
bipartition~$(R,C)$, which simply means that~$R\cap C=\emptyset$ and
every edge in~$G$ joins an element of~$R$ to an element of~$C$.  We
call such a graph an $(R,C)$-bipartite graph. Clearly the map
$A\mapsto G(A)$ is a bijection from the set of all $m\times n$
matrices of $0$s and~$1$s onto $\Gamma(R,C)$, the set of all
$(R,C)$-bipartite graphs. We remark in passing that in the linear
algebra and spectral graph theory literature, $A$ is called the
biadjacency matrix of~$G(A)$.

We will write $A=M(G)$ to mean that $G=G(A)$, and %
we adopt the shorthand
\[ \|G\|:=\|M(G)\|_{\schur}.\] In particular, if~$R$ and~$C$ are
countably infinite sets, then \[\N=\{\|G\|\colon G\in \Gamma(R,C)\}\setminus\{\infty\}.\]

More generally, if~$X$ and~$Y$ are any sets and~$G\subseteq X\times
Y$, then we may think of~$G$ as a bipartite graph whose vertex
set~$V(G)$ is the disjoint union of~$X$ and~$Y$, and whose edge set
is~$E(G)=G$. We write $\Gamma(X,Y)$ for the power set of~$X\times Y$,
viewed as the collection of all such bipartite graphs.

If~$G\in \Gamma(X,Y)$ and $G'\in \Gamma(X',Y')$, then we say that the
graphs $G$ and $G'$ are isomorphic if there is an isomorphism of
bipartite graphs from~$G$ to~$G'$. This means that there is a
bijection $\theta\colon V(G)\to V(G')$ which either maps $X$ onto $X'$
and $Y$ onto~$Y'$ or maps $X$ onto $Y'$ and $Y$ onto $X'$, so
that~$\theta$ induces a bijection from~$E(G)$ onto~$E(G')$.
We do not distinguish between isomorphic graphs, so for example we
write $G=G'$ if $G$ and $G'$ are merely isomorphic.

If $G_0\in \Gamma(X_0,Y_0)$ and~$G\in\Gamma(X,Y)$, then $G_0$ is an
induced subgraph of~$G$ if $X_0\subseteq X$, $Y_0\subseteq Y$ and for
$x_0\in X_0$ and $y_0\in Y_0$ we have
\[ (x_0,y_0)\in E(G_0)\iff (x_0,y_0)\in E(G).\] In other
words,~$G_0=G\cap (X_0\times Y_0)$; we will abbreviate this as
$G_0=G[X_0,Y_0]$.

If we merely have
\[ (x_0,y_0)\in E(G_0)\implies (x_0,y_0)\in E(G),\] so that $G_0$ may
be obtained by removing some edges from an induced subgraph of~$G$,
then we say that $G_0$ is a subgraph of~$G$.  We will write
$G_0\leq G$ or $G\geq G_0$ to mean that $G_0$ (or a graph isomorphic
to~$G_0$) is an induced subgraph of~$G$; and we will write
$G_0\subseteq G$ to mean that $G_0$ (or a graph isomorphic to~$G_0$)
is a subgraph of~$G$. Similarly, we write $G_0< G$ to mean
that~$G_0\leq G$ but~$G_0$ is not isomorphic to~$G$.

\begin{defn}
  Let~$G$ be a graph and let~$v$ be a vertex of~$G$. The set~$N(v)$ of
  neighbours of~$v$ in~$G$ consists of all vertices joined to~$v$ by
  an edge of~$G$.  The degree~$\deg(v)$ of~$v$ in~$G$ is the
  cardinality of~$N(v)$. If the vertices of~$G$ have bounded degree,
  then we write
  \[\deg(G)=\max_{v\in V(G)}\deg(v),\]
  and we write $\deg(G)=\infty$ otherwise.
  
  We say that vertices~$v,w$ in~$G$ are
  twins in~$G$ if $N(v)=N(w)$. A graph~$G$ is
  twin-free if no pair of distinct vertices are
  twins.
\end{defn}

\begin{prop}\label{prop:dupe-free}
  Any graph~$G$ has a maximal twin-free induced
  subgraph~$\df(G)$, which is unique up to graph isomorphism.
  If~$G$ is bipartite, then so is~$\df(G)$.
\end{prop}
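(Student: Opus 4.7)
The plan is to construct $\df(G)$ explicitly as the induced subgraph of~$G$ on a set of twin-class representatives. I would first observe that the relation ``$v$ and $w$ are twins'' is an equivalence relation on~$V(G)$. Choosing a set $T\subseteq V(G)$ containing exactly one vertex from each equivalence class, I set $\df(G):=G[T]$. The bipartite claim is then immediate: if~$G$ has bipartition~$(R,C)$, then $G[T]$ has bipartition $(R\cap T, C\cap T)$.

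The central structural fact I would use is that $N_G(v)$ is a union of twin classes for every $v\in V(G)$: if $u\in N_G(v)$ and $u$ is a twin of~$u'$, then $v\in N_G(u)=N_G(u')$, so $u'\in N_G(v)$. From this, the twin-freeness of $\df(G)$ follows at once. Suppose $v,w\in T$ have the same neighbours in~$\df(G)$. Each of $N_G(v)$ and $N_G(w)$ is then the union of the twin classes whose unique $T$-representative lies in this common neighbourhood, so $N_G(v)=N_G(w)$; that makes $v$ and~$w$ twins, hence $v=w$.

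For maximality, I would observe that any twin-free induced subgraph $H$ of~$G$ meets each twin class in at most one vertex: two distinct vertices $v,w\in V(H)$ in a common twin class would satisfy $N_H(v)=N_G(v)\cap V(H)=N_G(w)\cap V(H)=N_H(w)$, contradicting twin-freeness of~$H$. The map sending each $v\in V(H)$ to the $T$-representative of its twin class is therefore injective and, by the union-of-twin-classes observation, adjacency-preserving, yielding $H\leq \df(G)$. Uniqueness up to isomorphism then follows by applying the same reasoning to a second choice of representatives~$T'$: the natural bijection $T\to T'$ matching twin classes is a graph isomorphism. The most delicate step will be the twin-freeness verification; the union-of-twin-classes lemma is what keeps that argument from devolving into a case analysis based on whether $v$ or~$w$ arises as one of its own neighbours.
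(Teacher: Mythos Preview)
Your proof is correct and follows essentially the same approach as the paper: define $\df(G)$ as the induced subgraph on a complete set of twin-class representatives, and show that any twin-free induced subgraph meets each twin class at most once, hence embeds into $\df(G)$. Your explicit ``neighbourhoods are unions of twin classes'' observation makes the twin-freeness and adjacency-preservation steps more transparent than the paper's rather terse treatment, which simply asserts these points.
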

\begin{proof}
  Being twins is an equivalence relation on the vertices
  of~$G$. If we choose a complete set of equivalence class
  representatives, then the corresponding induced subgraph of~$G$ is
  twin-free, and by construction it is maximal with respect
  to~$\leq$ among the twin-free induced subgraphs of~$G$. Passing from
  one choice of equivalence class representatives to another produces
  an isomorphism of graphs. On the other hand, if~$v$ and~$w$ are any
  two distinct vertices in a twin-free induced subgraph $S\leq
  G$, then $v$ and $w$ are not twins in~$S$, so they cannot be
  twins in~$G$. So the vertices of~$S$ all lie in different
  equivalence classes, so $S$ is an induced subgraph of one of the
  maximal induced subgraphs we have described. Since any subgraph of a
  bipartite graph is bipartite, the second assertion is trivial.
\end{proof}

\begin{rk}
  Note that $M(\tf(G))$ is obtained from $M(G)$ by repeatedly deleting
  duplicate rows and columns.
\end{rk}

Let~$G$ be any graph. If~$v,v'$ are distinct vertices of~$G$, then a
path in~$G$ from~$v$ to $v'$ of length~$k$ is a finite sequence
$(v_0,v_1,v_2,\dots,v_k)$ of vertices of~$G$, where~$v=v_0$ and
$v'=v_k$, so that~$v_j$ is joined by an edge in~$G$ to~$v_{j+1}$ for
$1\leq j<k$. This is a simple path if no vertex appears twice. The
distance between~$v$ and~$v'$ is the smallest possible length of such
a path in~$G$. Being joined by some path in~$G$ is an equivalence
relation on the vertices of~$G$; by a connected component of~$G$ we
mean an equivalence class for this relation, and we say that~$G$ is
connected if it is a connected component of itself.

It is easy to see that:
\begin{lem}\label{lem:df-conn}
  A graph~$G$ is connected if and only if~$\df(G)$ is connected.\qed
\end{lem}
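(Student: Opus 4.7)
The lemma hinges on a single observation: because twins have identical neighbourhoods, an adjacency at a vertex $u$ is automatically inherited by every twin of $u$. Both implications follow by a path-lifting argument using this. Fix $\df(G)$ as the induced subgraph on a choice of representatives of the twin equivalence classes, and for each $u\in V(G)$ write $\pi(u)\in V(\df(G))$ for the chosen representative, so that $\pi(u)=u$ whenever $u\in V(\df(G))$.

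For the forward direction, assume $G$ is connected and take $v,w\in V(\df(G))$. Choose a path $v=u_0,u_1,\dots,u_k=w$ in $G$. Applying the identity $N(u_i)=N(\pi(u_i))$ on each endpoint of an edge $u_i\sim u_{i+1}$ shows successively that $\pi(u_i)\sim u_{i+1}$ and then $\pi(u_i)\sim\pi(u_{i+1})$ in $G$; since $\df(G)$ is an induced subgraph, these adjacencies persist in $\df(G)$. Hence $\pi(u_0),\pi(u_1),\dots,\pi(u_k)$ is a walk in $\df(G)$ from $v$ to $w$, which contains the required path.

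For the converse, assume $\df(G)$ is connected. Any $u\in V(G)$ with $u\ne \pi(u)$ is a twin of $\pi(u)$ and so shares every neighbour with it; picking any such neighbour $x$ gives a length-two path $u$--$x$--$\pi(u)$ in $G$. Given $v,w\in V(G)$, concatenating a short path $v\to\pi(v)$ in $G$, a path $\pi(v)\to\pi(w)$ in $\df(G)\subseteq G$, and a short path $\pi(w)\to w$ in $G$ yields a path in $G$ from $v$ to $w$. The only subtlety is the edge case of isolated vertices, which are twins with empty neighbourhood and so cannot be connected by the common-neighbour device above; in the bipartite setting of this paper such vertices correspond to zero rows or columns of the biadjacency matrix and may be discarded without affecting any Schur norm, so this issue is harmless.
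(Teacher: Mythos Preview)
Your argument is correct and is exactly the routine verification the paper has in mind; the paper simply states the lemma with a \qed\ and no further comment. Your observation about isolated vertices is apt: the converse direction genuinely fails for an edgeless graph on two or more vertices (since $\df(G)$ is then a single point, hence connected, while $G$ is not), but as you say this degeneracy never matters for the Schur-norm considerations of the paper.
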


The size $|G|$ of a graph~$G$ is the cardinality of its vertex set. We
say that~$G$ is finite if~$|G|<\infty$. Let~$\F(G)$ be the set
\[ \F(G) = \{F\leq G\colon \text{$F$ is finite, connected and
  twin-free}\}.\]
We will use the following
observation in Section~\ref{sec:cbn}.

\begin{lem}\label{lem:finite-connected-subgraphs}
  Let~$X,Y$ be sets and let~$G\in \Gamma(X,Y)$ be a connected
  bipartite graph. 
  If~$\F(G)$ contains finitely many non-isomorphic bipartite graphs,
  then~$\df(G)$ is finite.
\end{lem}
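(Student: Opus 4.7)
The plan is to bound $|V(\tf(G))|$ in terms of the finitely many isomorphism classes in~$\F(G)$. Let~$N$ denote the maximum vertex count of any member of~$\F(G)$, and set $H=\tf(G)$; by Lemma~\ref{lem:df-conn} and Proposition~\ref{prop:dupe-free}, $H$ is connected and twin-free. I will show $|V(H)|\le N$, arguing by contradiction: I assume that $N+1$ distinct vertices $v_1,\dots,v_{N+1}\in V(H)$ exist and produce a member of~$\F(G)$ with strictly more than $N$ vertices, contradicting the choice of~$N$.

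The construction is as follows. Since~$H$ is twin-free, for each pair $i<j$ I pick a vertex $u_{ij}\in V(H)$ that is $H$-adjacent to exactly one of~$v_i$ and~$v_j$. Since~$H$ is connected, I then pick, for each pair of vertices in the finite set $W_0=\{v_1,\dots,v_{N+1}\}\cup\{u_{ij}:i<j\}$, a path in~$H$ joining them; let~$W$ be the (finite) union of~$W_0$ with the vertex sets of all these paths. By construction $H[W]$ is a finite, connected induced subgraph of~$H$. The crucial observation is the identity $N_{H[W]}(v)=N_H(v)\cap W$ for $v\in W$, which guarantees that each $u_{ij}\in W$ continues to distinguish~$v_i$ from~$v_j$ inside~$H[W]$; hence $v_1,\dots,v_{N+1}$ lie in $N+1$ distinct twin-classes of~$H[W]$, and so $|V(\tf(H[W]))|\ge N+1$. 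Since $\tf(H[W])$ is twin-free by definition, connected by Lemma~\ref{lem:df-conn}, and an induced subgraph of~$H[W]$ hence of~$G$, it belongs to~$\F(G)$, yielding the required contradiction.

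I do not anticipate a serious obstacle. The only genuinely non-trivial point is recognising that twin-equivalence on a vertex set can only get finer as the ambient induced subgraph grows, which is what lets the distinguishing vertices~$u_{ij}$ survive the restriction to~$H[W]$; everything else is routine manipulation of the definitions of~$\tf$ and~$\F$, together with the two earlier results already cited.
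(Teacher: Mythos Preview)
Your proof is correct and follows essentially the same approach as the paper's. Both arguments pick more vertices in $\tf(G)$ than the maximum size $N$ of a graph in $\F(G)$, add distinguishing witnesses, enlarge to a finite connected induced subgraph using connectivity of $\tf(G)$, and then pass to its twin-free reduction to obtain a member of $\F(G)$ of size exceeding $N$. The only cosmetic difference is that the paper picks its initial vertices from one side of the bipartition (assuming without loss of generality that this side is infinite), whereas you allow them to be arbitrary vertices of $\tf(G)$; this changes nothing of substance.
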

\begin{proof}
  Suppose instead that $\df(G)=G[S,T]$ where~$S\subseteq X$
  and~$T\subseteq Y$ and~$S$ is infinite.  Let~$A$ be a finite subset
  of~$S$ with~$|A|>|F|$ for every~$F\in \F(G)$. Since~$G[S,T]$ is
  twin-free, for any pair~$a_1,a_2$ of distinct vertices in~$A$ there
  is a vertex~$t=t(a_1,a_2)\in T$ so that one of~$(a_1,t)$
  and~$(a_2,t)$ is an edge of~$G$, and the other is
  not. Consider \[B=\{t(a_1,a_2)\colon a_1,a_2\in A,\ a_1\ne a_2\}.\]
  Since~$\df(G)$ is connected by Lemma~\ref{lem:df-conn}, we can find
  finite sets~$A',B'$ with $A\subseteq A'\subseteq S$ and~$B\subseteq
  B'\subseteq T$ so that $G[A',B']$ is connected. Consider
  $F=\df(G[A',B'])$. By construction, $F\in \F(G)$. However, $|F|\geq
  |A|$ since no two vertices in~$A$ are twins in~$G[A',B']$, so~$F$
  cannot be (isomorphic to) an element of~$\F(G)$, a
  contradiction.%
\end{proof}

\section{Basic results}\label{sec:basicresults}

If~$A$ and~$B$ are matrices, then we will
write
\[ A\simeq B\] to mean that $B=UAV$ for some permutation
matrices~$U,V$; in other words, permuting the rows and columns of~$A$
yields $B$.

The following facts about the norms of Schur multipliers are
well-known.
\begin{prop}\label{prop:basic-matrices}
  Let~$A$ and~$B$ be matrices with countably many rows and columns.
  \begin{enumerate}
  \item $\|A\|_\schur=\|A^t\|_{\schur}$
  \item If~$A\simeq B$, then $\|A\|_\schur=\|B\|_\schur$.
  \item If~$B$ can be obtained from~$A$ by deleting some rows
    or columns, then $\|B\|_{\schur}\leq \|A\|_{\schur}$.
  \item $\|A_1\oplus A_2\oplus A_3\oplus\dots\|_\schur=\sup_{j}\|A_j\|_{\schur}$
  \item $\|A\|_{\schur}=\left\|\begin{smallbmatrix}
          A&A\\A&A
        \end{smallbmatrix}\right\|_{\schur}$
    \item If~$B$ can be obtained from~$A$ by duplicating rows or
      columns, then $\|B\|_{\schur}=\| A\|_{\schur}$.
  \end{enumerate}
\end{prop}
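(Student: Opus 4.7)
The plan is to use two complementary tools: the definition $\|A\|_{\schur} = \sup\{\|A\schur X\| : \|X\|\leq 1\}$ for the ``compression-type'' items (iii) and (iv), and the Haagerup factorisation $A=S^*R$ with $\|A\|_{\schur} = \min c(S)c(R)$ (described in the introduction) for the ``duplication-type'' items (v) and (vi). Items (i) and (ii) drop out from direct algebraic identities.

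For (i), observe that $(A\schur X)^t = A^t \schur X^t$ and that $X\mapsto X^t$ is an isometry on bounded operators (since $X^t = \overline{X^*}$), so the suprema defining the two Schur norms coincide. For (ii), writing $B=UAV$ with $U,V$ permutation matrices, a direct index computation gives $B\schur X = U(A\schur U^*XV^*)V$, and unitarity of $U$ and $V$ finishes the job. For (iii), extend any test matrix $Y$ for $B$ by zero to a test matrix $X$ for $A$ with $\|X\|=\|Y\|$; then $B\schur Y$ sits as a block of $A\schur X$ with the other blocks zero, yielding $\|B\schur Y\|\leq \|A\|_{\schur}\|Y\|$.

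For (iv), writing $D = \bigoplus_j A_j$, the product $D\schur X$ annihilates all off-diagonal blocks, leaving a block-diagonal operator whose norm is $\sup_j \|A_j\schur X_{jj}\| \leq \sup_j\|A_j\|_{\schur}\cdot\|X\|$; the reverse inequality follows from (iii) since each $A_j$ is a submatrix of $D$. For (vi), suppose $B$ duplicates row $i_0$ of $A=S^*R$, and form $S'$ by duplicating the $i_0$-th column of $S$; then $B=(S')^*R$ is a factorisation of $B$, and $c(S')=c(S)$ since repeating a column leaves the maximum $\ell^2$-column-norm unchanged. Minimising over factorisations gives $\|B\|_{\schur}\leq \|A\|_{\schur}$, with the reverse direction from (iii); the case of duplicating a column is symmetric, or follows from (i). Finally, (v) is the special case where every row and every column of $A$ is duplicated, after a permutation (allowed by (ii)) collecting the duplicated indices into $2\times 2$ blocks.

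None of these steps presents a genuine obstacle — the proposition is essentially a catalogue of structural identities that unpack the definitions. The most delicate point is the factorisation argument in (vi): recognising that duplicating a row of $A$ corresponds to duplicating a column of $S$ in any factorisation $A=S^*R$, and that this preserves $c(S)$, is what makes the duplication inequality essentially free.
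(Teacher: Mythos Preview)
Your argument is correct. For items (i)--(iv) it coincides with the paper's one-line justification that these follow from elementary properties of the operator norm. The genuine difference is in (v) and (vi), where you and the paper reverse the logical order and use different (though closely related) tools. The paper proves (v) first by identifying $S_B$ for $B=\begin{smallbmatrix}A&A\\A&A\end{smallbmatrix}$ with the two-fold ampliation $S_A^{(2)}$ and invoking the Haagerup--Smith theorem $\|S_A\|_{cb}=\|S_A\|$ directly; it then deduces (vi) by replacing the number~$2$ with an arbitrary countable cardinal and combining with (iii). You instead prove (vi) first via the factorisation characterisation $\|A\|_\schur=\min c(S)c(R)$, observing that duplicating a row of~$A$ corresponds to duplicating a column of~$S$ without changing $c(S)$, and then recover (v) as the special case where every row and column is duplicated once. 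Both routes ultimately rest on the same deep fact---the factorisation formula is equivalent to the cb-norm equality---but your packaging is arguably more concrete and avoids the operator-space language of ampliations, at the cost of relying on the factorisation result (which the paper states only for finite matrices, though it holds in the countable case as well; if one is cautious one can run your argument with an $\epsilon$-approximate factorisation and lose nothing).
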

\begin{proof}
  Statements (1)--(4) all follow easily from properties of the
  operator norm~$\|\cdot\|_\B$. %
  For (5), let us write $S_A\colon \B\to\B$, $X\mapsto A\schur X$ for
  the mapping of Schur multiplication by~$A$.  The two-fold ampliation
  $S_A^{(2)}\colon M_2(\B)\to M_2(\B)$ of~$S_A$ (in the sense of
  operator space theory) may be naturally identified with $S_B$ where
  $B=\begin{smallbmatrix}
          A&A\\A&A
        \end{smallbmatrix}$. 
      Now \[\|A\|_{\schur}=\|S_A\|\leq
      \|S_A^{(2)}\|=\|B\|_{\schur}\leq
      \|S_A\|_{cb}=\|S_A\|=\|A\|_\schur\] where the equality
      $\|S_A\|_{cb}=\|S_A\|$ is a theorem commonly attributed to an
      unpublished manuscript of Haagerup 
      (see~\cite[p.~115]{paulsen-book}, for example) and is also
      established in~\cite{smith91}. We therefore have equality,
      hence~(5).  Replacing the number~$2$ in this argument with some
      other countable cardinal and using statement~(3) then yields a
      proof of statement (6).
\end{proof}

Specialising to idempotent Schur multipliers and restating in terms of
bipartite graphs, we have: %
\begin{prop}\label{prop:basic-graphs}
  Let~$R$ and~$C$ be countable sets, and let $G\in \Gamma(R,C)$.
  \begin{enumerate}
  \item If~$G'\in\Gamma(R',C')$ and $G'$ is isomorphic to~$G$, then
    $\|G'\|=\|G\|$.
  \item If~$G_0\leq G$, then $\|G_0\|\leq \|G\|$.
  \item The norm of $G$ is the supremum of the norms of the
    connected components of~$G$.
  \item $\|G\|=\|\df(G)\|$.
  \end{enumerate}
\end{prop}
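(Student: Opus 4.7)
The plan is to derive each of the four statements by translating to the biadjacency matrix $M(G)$ and invoking the corresponding item of Proposition~\ref{prop:basic-matrices}. Almost nothing beyond bookkeeping is needed; the only mildly subtle point is that an isomorphism of bipartite graphs is allowed to swap the two sides of the bipartition, and we must account for this in~(1).

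For~(1), suppose $\theta\colon V(G)\to V(G')$ is a bipartite isomorphism. If $\theta$ maps $R$ onto $R'$ and $C$ onto $C'$, then $M(G')$ is obtained from $M(G)$ by applying the row permutation induced by $\theta|_R$ and the column permutation induced by $\theta|_C$; hence $M(G)\simeq M(G')$, and Proposition~\ref{prop:basic-matrices}(2) gives the claim. If instead $\theta$ maps $R$ onto $C'$ and $C$ onto $R'$, then the same procedure yields $M(G)^t\simeq M(G')$, and we combine Proposition~\ref{prop:basic-matrices}(1) with~(2).

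For~(2), after invoking~(1) we may assume $G_0=G[R_0,C_0]$ for some $R_0\subseteq R$ and $C_0\subseteq C$; then $M(G_0)$ is obtained from $M(G)$ by deleting the rows indexed by $R\setminus R_0$ and the columns indexed by $C\setminus C_0$, so Proposition~\ref{prop:basic-matrices}(3) applies. For~(3), enumerate the connected components as $G_i\in\Gamma(R_i,C_i)$, where $\{R_i\}$ partitions $R$ and $\{C_i\}$ partitions $C$ (isolated vertices contributing singleton parts with an empty counterpart). Since there are no edges between different components, reordering the rows by the index $i$ and similarly the columns shows that $M(G)$ is permutation-equivalent to $\bigoplus_i M(G_i)$, where components consisting of a single isolated vertex contribute a zero row or column; applying Proposition~\ref{prop:basic-matrices}(2) and~(4) (with the trivial observation that zero rows/columns do not affect the supremum) yields the statement. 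For~(4), the remark following Proposition~\ref{prop:dupe-free} says precisely that $M(G)$ is obtained from $M(\df(G))$ by duplicating rows and columns, so Proposition~\ref{prop:basic-matrices}(6) gives $\|G\|=\|\df(G)\|$.

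There is no real obstacle; the whole proposition is a formal translation, and the trickiest moment is handling the bipartition-swapping isomorphism in~(1) by passing to the transpose of the biadjacency matrix.
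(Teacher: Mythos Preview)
Your proof is correct and follows essentially the same approach as the paper: each item is derived from the corresponding assertion of Proposition~\ref{prop:basic-matrices} via the biadjacency matrix. Your write-up is in fact more careful than the paper's terse version, explicitly handling the bipartition-swapping case in~(1) via the transpose and spelling out the block-diagonal reduction in~(3).
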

\begin{proof}
  (1) follows from assertions~(1) and~(2) of
  Proposition~\ref{prop:basic-matrices}.
  For $j=2,3$, assertion~($j$) here is a rewording of
  assertion~($j+1$) of~Proposition~\ref{prop:basic-matrices}.
  (4) follows easily using %
  the proof of Proposition~\ref{prop:dupe-free} and
  Proposition~\ref{prop:basic-matrices}(6).
\end{proof}

\begin{rk}
  It is natural to ask whether Proposition~\ref{prop:basic-graphs}(2)
  generalises to to all subgraphs, and not merely induced
  subgraphs. In other words, is following implication valid?
  \[ G_0 \subseteq G\stackrel{\text{?}}{\implies} \|G_0\|\leq \|G\|\]
  The answer is no. The complete graph $K$
  in~$\Gamma(\aleph_0,\aleph_0)$ corresponds to the matrix of all
  $1$s, which has Schur multiplier norm~$1$ since it gives the
  identity mapping, but as is well-known~\cite{kwapien-pel}, the
  upper-triangular subgraph $T\subseteq K$ whose matrix is
  \[ M(T)=
  \begin{bmatrix}
    1&1&1&\dots\\
    0&1&1&\dots\\
    0&0&1&\dots\\
    \vdots &\vdots &\vdots &\ddots
  \end{bmatrix}\] has $\|T\|=\infty$. Note that~$T$ is twin-free,
  but~$K$ is certainly not. In view of
  Proposition~\ref{prop:basic-graphs}(4), we might then ask whether
  this implication holds provided either~$G$ alone, or both~$G$
  and~$G_0$, are required to be twin-free. Again, the answer is
  no; a counterexample is given by~(7) and~(8) of
  Proposition~\ref{prop:smallnorms} below.
\end{rk}

\begin{rk}\label{rk:real-complex}
  We now explain why our results are identical regardless of whether
  we choose~$\bF=\bR$ or~$\bF=\bC$; we are grateful to an anonymous
  referee for providing the following simple argument. Let~$\B_\bF$ be
  the space of bounded linear maps from $\ell^2_{n,\bF}$ to
  $\ell^2_{m,\bF}$, the corresponding $\ell^2$ spaces with entries
  in~$\bF$. For a Schur multiplier~$A\in M_{m,n}(\bF)$, we temporarily
  write~$\|A\|_{\schur,\bF}$ for the norm of the map $\B_{\bF}\to
  \B_{\bF}$, $B\mapsto A\schur B$. %
  Given $X\in M_{m,n}(\bC)$, $\alpha\in\bC^m$ and~$\beta\in\bC^n$,
  write $\alpha_i=|\alpha_i|v_i$ and $\beta_j=|\beta_j|w_j$
  where~$v_i,w_j\in\bT$, and let~$\widetilde x_{ij} =
  \Re(x_{ij}v_iw_j)$. We have $\Re(x_{ij}\alpha_i\beta_j)=\widetilde
  x_{ij}|\alpha_i|\,|\beta_j|$, and the matrix~$\widetilde
  X=[\widetilde x_{ij}]\in M_{m,n}(\bR)$ has norm~$\|\widetilde
  X\|_{M_{m,n}(\bR)}\leq \|X\|_{M_{m,n}(\bC)}$. So
  \begin{align*}
  \|A\|_{\schur,\bC} &= \sup\left\{\left|\sum_{i,j} a_{ij}x_{ij}\alpha_i\beta_j\right|\colon \|X\|_{M_{m,n}(\bC)},\,\|\alpha\|_{\bC^m},\,\|\beta\|_{\bC^n}\leq 1\right\}
  \\
  &= \sup\left\{\sum_{i,j} a_{ij}\Re(x_{ij}\alpha_i\beta_j)\colon \|X\|_{M_{m,n}(\bC)},\,\|\alpha\|_{\bC^m},\,\|\beta\|_{\bC^n}\leq 1\right\}
  \\
  &\leq \sup\left\{\sum_{i,j} a_{ij}\widetilde x_{ij}\alpha_i\beta_j\colon \|\widetilde X\|_{M_{m,n}(\bR)},\,\|\alpha\|_{\bR^m},\,\|\beta\|_{\bR^n}\leq 1\right\}
  \\
  &=\|A\|_{\schur,\bR}.
  \end{align*}
  Since the reverse inequality is trivial, we have equality.

  For~$\bF=\bC$, this allows a slight simplification of the formula
  defining the norm of a Schur multiplier with real entries.  Indeed,
  any Schur multiplier~$A\in M_{m,n}(\bR)$ has
  \[\|A\|_{\schur}=\sup_{X\in O(m,n)} \|A\schur X\|_{\B_{\bR}}\]
  where~$O(m,n)$ is the set of extreme points of the unit ball
  of~$\B_{\bR}$: the set of isometries in~$\B_\bR$ if $n\leq m$, or
  the set of coisometries if~$n\geq m$.  %
\end{rk}

\section{Norms of simple paths}\label{sec:snakes}

\begin{defn}
  For~$n\in\bN$, the $n$-cycle~$\Lambda(n)$ is the maximal cycle
  in~$\Gamma(n,n)$; equivalently, it is connected and every vertex has
  degree~$2$. For example, $\Lambda(3)=\loopee$.
  
  The $(n,n)$ path~$\Sigma(n,n)$ and the $(n,n+1)$
  path~$\Sigma(n,n+1)$ are the maximal simple paths in~$\Gamma(n,n)$
  and~$\Gamma(n,n+1)$, respectively; for example,
  $\Sigma(3,3)=\snakeee$ and $\Sigma(3,4)=\snakeer$. %
\end{defn}

For~$n\in\bN$, we write $\theta_n=\frac{\pi}{2n}$.
By~\cite[Example~4.7]{dav-don}, we have
\begin{equation}%
  \label{eq:davdon-loops} \|\Lambda(n)\|=
  \begin{cases}
  \frac{2}{n}\cot\theta_{n}&\text{if~$n$ is even,}\\
  \frac{2}{n}\csc\theta_{n}&\text{if~$n$ is odd.}
\end{cases}
\end{equation}
The main result of this section is:
\begin{thm}\label{thm:snakes}
  For every~$n\in\bN$,  \[\|\Sigma(n,n)\|=\|\Sigma(n,n+1)\|=\frac
  2{n+1}\cot \theta_{n+1}.\]
\end{thm}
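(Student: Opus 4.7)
Plan: I would prove the theorem by sandwiching
\[\frac{2}{n+1}\cot\theta_{n+1} \leq \|\Sigma(n,n)\| \leq \|\Sigma(n,n+1)\| \leq \frac{2}{n+1}\cot\theta_{n+1}.\]
The middle inequality is immediate: $\Sigma(n,n)$ is obtained from $\Sigma(n,n+1)$ by deleting the rightmost column-vertex, so $\Sigma(n,n)\leq \Sigma(n,n+1)$ and Proposition~\ref{prop:basic-graphs}(2) applies.

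For the upper bound $\|\Sigma(n,n+1)\| \leq \frac{2}{n+1}\cot\theta_{n+1}$, my first attempt is to use the induced containment $\Sigma(n,n+1)\leq \Lambda(n+1)$, which comes from removing any one of the $2(n+1)$ vertices of the cycle and seeing an alternating path on $2n+1$ vertices. When $n+1$ is even, \eqref{eq:davdon-loops} immediately gives $\|\Sigma(n,n+1)\|\leq \|\Lambda(n+1)\|=\frac{2}{n+1}\cot\theta_{n+1}$, and that half of the proof is free. When $n+1$ is odd, however, the cycle bound $\|\Lambda(n+1)\|=\frac{2}{n+1}\csc\theta_{n+1}$ is strictly too large, and a direct Haagerup factorisation is needed. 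I would try $M(\Sigma(n,n+1))=S^*R$ with $k=n+1$, taking as rows of $S$ and $R$ an appropriately weighted discrete sine eigenbasis on the row-side and column-side of the path, with weights of the form $\sqrt{|1+e^{i(2k-1)\theta_{n+1}}|}$, modelled on the Fourier factorisation used by Davidson and Donsig in the cycle case. If the ansatz is right, orthogonality of the sine basis produces the factorisation identity, and the product $c(S)\,c(R)$ collapses to $\frac{2}{n+1}\cot\theta_{n+1}$ via a closed-form identity of the type $\sum_k |\cos((2k-1)\theta_{n+1})|=\cot\theta_{n+1}$.

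For the reverse inequality $\|\Sigma(n,n)\|\geq \frac{2}{n+1}\cot\theta_{n+1}$, I would produce a test matrix $X$ of operator norm at most~$1$ for which $M(\Sigma(n,n))\schur X$ has operator norm equal to the target. The natural candidate is a rank-one matrix $X=uv^*$ whose entries on the diagonal and superdiagonal are built from products of sine eigenfunctions $i\mapsto \sin(ki\theta_{n+1})$ of the path Laplacian, tuned so that $M(\Sigma(n,n))\schur X$ is bidiagonal with principal singular value exactly $\frac{2}{n+1}\cot\theta_{n+1}$, the singular vectors being directly readable from the same sine basis.

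The principal obstacle is locating the correct trigonometric ansatz, both for the factorisation $S^*R$ and for the test matrix $X$, and then verifying that the various trigonometric sums that arise collapse to the closed form $\cot\theta_{n+1}$. Once the correct choices are in hand, everything reduces to a finite explicit computation. The coincidence of the $n\times n$ and $n\times (n+1)$ norms is then a direct consequence of the sandwich: the factorisation upper bound does not feel the extra column-vertex, while the sine-based lower bound saturates already on the smaller matrix.
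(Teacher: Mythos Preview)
Your sandwich $\tfrac{2}{n+1}\cot\theta_{n+1}\le\|\Sigma(n,n)\|\le\|\Sigma(n,n+1)\|\le\tfrac{2}{n+1}\cot\theta_{n+1}$ is exactly the shape of the paper's argument, and your upper-bound strategy is sound in outline: the paper also produces an explicit Haagerup factorisation $\tilde S^*\tilde R=M(\Sigma(n,n+1))$ with $c(\tilde S)c(\tilde R)=\tfrac{2}{n+1}\cot\theta_{n+1}$. (The paper does this uniformly in~$n$ via a block-rotation matrix~$W$ rather than splitting on the parity of~$n+1$, but your parity split, using $\Lambda(n+1)$ when $n$ is odd and a direct factorisation when $n$ is even, would also work once the trigonometric bookkeeping is carried out.)

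The lower bound, however, has a genuine gap. You propose a \emph{rank-one} test matrix $X=uv^*$ with $\|X\|_{\B}\le1$ and ask that $\|M(\Sigma(n,n))\schur X\|_{\B}$ hit the target $\tfrac{2}{n+1}\cot\theta_{n+1}>1$. This is impossible: for any $0$--$1$ matrix~$A$ and unit vectors $u,v$,
\[
\|A\schur(uv^*)\|_{\B}^2\le \|A\schur(uv^*)\|_{HS}^2=\sum_{i,j}a_{ij}\,|u_i|^2|v_j|^2\le \|u\|^2\|v\|^2=1,
\]
so a rank-one operator-norm test can never witness a Schur norm exceeding~$1$. (Rank-one tests \emph{do} work in the trace-class picture, since rank-ones are the extreme points of the $\C_1$ unit ball; but that is not what you wrote, and making it precise requires computing $\|M(\Sigma(n,n))\schur(uv^*)\|_1$ exactly, which is a different and not obviously easier calculation.)

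The paper closes this gap by a different mechanism: having built the factorisation $B=S^*R$, it invokes the Angelos--Cowen--Narayan criterion and shows, via the trigonometric identity in Proposition~\ref{prop:conv}, that
\[
\conv\{R_jR_j^*: j\}\ \cap\ \conv\{S_iS_i^*: i\}\ \ne\ \emptyset.
\]
This convex-hull intersection forces the existence of an \emph{orthogonal} matrix~$U$ and unit vectors $x,y$ with $\langle(B\schur U^t)x,y\rangle=c(S)c(R)$, certifying that the Haagerup estimate is attained. Thus the optimal witness is full-rank, not rank-one, and the lower bound comes from the same factorisation data as the upper bound rather than from an independently guessed test matrix.
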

Before the proof, we make some remarks.

\begin{rk}
  Observe that while $\Sigma(n,n)< \Sigma(n,n+1)<\Lambda(n+1)$,
  the norms of the first two graphs are equal for every~$n$ and all
  three have equal norm for odd~$n$. However, these assertions do
  not follow from any of the easy observations of
  Proposition~\ref{prop:basic-graphs} since these graphs are all
  connected and twin-free.
\end{rk}
\begin{ques}
  Is there a combinatorial characterisation of the connected twin-free
  bipartite graphs~$G_0< G$ with $\|G_0\|=\|G\|$?
\end{ques}

\begin{rk}
  Theorem~\ref{thm:snakes} improves the following bounds of
  Popa~\cite{popa-thesis}:
  \[\frac1n\left(\csc\left(\frac{\pi}{4n+2}\right)-1\right)\leq
  \|\Sigma(n,n)\|\leq \frac2{n+1}\cot \theta_{n+1}.\] She establishes
  the upper bound using results of Mathias~\cite{mathias}, and the
  lower bound using some eigenvalue formulae due to Yueh~\cite{yueh}.
\end{rk}

The following corollary is also noted in~\cite{popa-thesis}.  Another
proof can be found by applying a theorem of
Bennett~\cite[Theorem~8.1]{bennett} asserting that the norm of a
Toeplitz Schur multiplier~$A$ is the total variation of the Borel
measure~$\mu$ on~$\bT$ with~$a_{i-j}=\hat
\mu(i-j)$.%

\begin{cor}\label{cor:4/pi}
  The infinite matrix $A$ with $a_{ij}=1$ if $j\in \{i,i+1\}$ and
  $a_{ij}=0$ otherwise has $\|A\|_{\schur}=4/\pi$.
\end{cor}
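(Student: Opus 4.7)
The plan is to realise $A$ as the bi-adjacency matrix of an infinite bipartite graph whose upper-left $n\times(n+1)$ corners are precisely the simple paths $\Sigma(n,n+1)$ of Theorem~\ref{thm:snakes}, and then to obtain $\|A\|_{\schur}$ as the limit of $\|\Sigma(n,n+1)\|$ as $n\to\infty$.

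In detail, let $A_n$ denote the upper-left $n\times(n+1)$ submatrix of $A$. A direct check shows that $G(A_n)\cong \Sigma(n,n+1)$ (the edges $(r_i,c_i)$ and $(r_i,c_{i+1})$ for $1\le i\le n$ string the $2n+1$ vertices into a single simple path), so Theorem~\ref{thm:snakes} gives
\[ \|A_n\|_{\schur}=\frac{2}{n+1}\cot\theta_{n+1}. \]
Proposition~\ref{prop:basic-matrices}(3) yields $\|A_n\|_{\schur}\le\|A\|_{\schur}$ for every $n$, while the observation recorded in the introduction---that the Schur norm of an infinite $0$--$1$ matrix equals the supremum of the Schur norms of its finite upper-left corners---supplies the matching bound $\|A\|_{\schur}\le\sup_n\|A_n\|_{\schur}$. (If desired, one can verify this last inequality directly: for any contraction $Y\in\B(\ell^2)$ and $P_k$ the projection onto the span of the first $k$ basis vectors, $\|A\schur Y\|=\sup_{k}\|P_k(A\schur Y)P_k\|$, and each term is bounded by $\|A_k\|_{\schur}$ after suitable padding.)

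It then suffices to evaluate the limit. Since $\sin\theta_m\sim\theta_m$ and $\cos\theta_m\to 1$ as $m\to\infty$,
\[ \lim_{n\to\infty}\frac{2}{n+1}\cot\theta_{n+1}=\lim_{m\to\infty}\frac{2}{m}\cdot\frac{\cos\theta_m}{\sin\theta_m}=\lim_{m\to\infty}\frac{2}{m}\cdot\frac{2m}{\pi}=\frac{4}{\pi}. \]
Monotonicity of the sequence $(\|\Sigma(n,n+1)\|)_n$---immediate from $\Sigma(n,n+1)\le\Sigma(n+1,n+2)$ via Proposition~\ref{prop:basic-graphs}(2), or directly from the explicit formula---then combines with the two-sided bound above to give $\|A\|_{\schur}=4/\pi$.

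There is no substantive obstacle: given Theorem~\ref{thm:snakes}, this is essentially a one-line corollary whose only analytic content is the limit calculation $\frac{2}{n+1}\cot\frac{\pi}{2(n+1)}\to 4/\pi$. The only mildly technical point is the passage from finite truncations to the infinite matrix, but this is already noted in the introduction.
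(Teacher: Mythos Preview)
Your proof is correct and essentially identical to the paper's: the paper takes the $n\times n$ upper-left corners (which realise $\Sigma(n,n)$) rather than your $n\times(n+1)$ corners (which realise $\Sigma(n,n+1)$), but since Theorem~\ref{thm:snakes} assigns both the value $\frac{2}{n+1}\cot\theta_{n+1}$, the two routes coincide. The only quibble is that in your parenthetical justification the $k\times k$ corner $P_k(A\schur Y)P_k$ is controlled by \emph{deleting} a column from your $A_k$, not by ``padding''; the inequality you need is still just Proposition~\ref{prop:basic-matrices}(3).
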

\begin{proof}
  The Schur multiplier norm of~$A$ is the supremum of the Schur
  multiplier norms of its $n\times n$ upper left-hand corners~$A_n$,
  and $G(A_n)=\Sigma(n,n)$. Hence
  \[ 
  \|A\|_{\schur}= \sup_{n\ge1}\|A_n\|_{\schur}=\sup_{n\ge1}
  \frac{2}{n+1}\cot\theta_{n+1}=\frac
  4\pi.\qedhere\]
\end{proof}

Recall that~$\N$ denotes the set of norms of all (bounded) Schur
idempotents.

\begin{rk}
  $\N$ is not discrete: its accumulation points include $2$
  by~\cite{bcd}, $\sqrt 2$ by Remark~\ref{rk:sqrt2} below, and $4/\pi$
  by Corollary~\ref{cor:4/pi}. By Theorem~\ref{thm:gaps}, the infimum
  of the set of accumulation points of~$\N$ is in the
  interval~$[\eta_6,4/\pi]$.
\end{rk}

\begin{ques}
  Is this infimum equal to~$4/\pi$?
\end{ques}

\begin{ques}
  Is~$\N$ closed? Does it have non-empty interior? Are there any limit
  points from above which are not limit points from below?
\end{ques}

We turn now to the proof of Theorem~\ref{thm:snakes}, which will
occupy us for the rest of this section.  Fix~$n\in\bN$. For~$j\in\bZ$, write
\[\kappa(j)=\cos(j\theta_{n+1})\quad\text{and}\quad \lambda(j)=\sin(j\theta_{n+1})\]
where as above, $\theta_{n+1}=\tfrac\pi{2(n+1)}$.
 Clearly,~$\lambda(j)=0\iff j\in 2(n+1)\bZ$.  The
following useful identity, valid for~$N\in\bN$, $f\in
\{\kappa,\lambda\}$ and $a,d\in\bZ$ with $\lambda(d)\ne0$, is an
immediate consequence of the formulae in~\cite{knapp}.
\begin{equation}%
  \label{eq:bigstar}
  \sum_{j=0}^{N} f(a+2dj) = \frac{\lambda((N+1)d)}{\lambda(d)} f(a+Nd)
\end{equation}

\begin{lem}\label{lem:altzero}
  Let $a\in\bZ$ and let~$f,g,h\in \{\pm \kappa,\pm\lambda\}$. 
  \begin{enumerate}
  \item If $m\in2\bZ$ and~$|m|\leq 2n$, then
    \[\displaystyle\sum_{j=0}^{2n+1} (-1)^j f(a+mj)=0.\]
  \item If~$s,t\in\bZ$ with $\max\{|s|,|t|\}\leq
    n-1$ and~$s\equiv t\pmod 2$, then
    \[\displaystyle \sum_{j=0}^{2n+1}(-1)^j f(a+2j)g(sj)h(tj)=0=\sum_{j=0}^{2n+1}(-1)^j g(sj)h(tj).\]
  \end{enumerate}
\end{lem}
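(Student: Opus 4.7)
The plan is to reduce both parts to the geometric-sum identity~$(\bigstar)$ by repeated application of the product-to-sum formulas. The key observation is
\[ (-1)^j = \cos(j\pi) = \kappa\bigl(2(n+1)j\bigr), \]
which lets us absorb the alternating sign into the trigonometric factors.

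For part~(1), write $m = 2m'$ with $|m'| \le n$. A single product-to-sum step gives
\[ (-1)^j f(a+2m'j) = \tfrac12\bigl(f_+(a + 2d_+\, j) + f_-(a + 2d_-\, j)\bigr), \]
where $d_\pm = m' \pm (n+1)$ and $f_\pm \in \{\pm\kappa, \pm\lambda\}$ are determined by the choice $f = \kappa$ or $f = \lambda$. The hypothesis $|m'| \le n$ forces $|d_\pm| \in \{1,2,\dots,2n+1\}$, so $d_\pm \theta_{n+1} \in (-\pi,\pi)\setminus\{0\}$ and in particular $\lambda(d_\pm) \ne 0$. Applying $(\bigstar)$ with $N = 2n+1$ then produces the factor $\lambda((2n+2) d_\pm) = \sin(d_\pm \pi) = 0$ in the numerator, so both resulting sums vanish.

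For part~(2), tackle the second sum first. The product-to-sum formulas express $g(sj) h(tj)$ as a linear combination of terms of the form $\kappa((s\pm t) j)$ and $\lambda((s\pm t)j)$. Because $s \equiv t \pmod 2$, the integers $s\pm t$ are even, and $|s\pm t| \le 2(n-1) \le 2n$; thus each piece is an instance of part~(1) with $a=0$ and $m = s\pm t$, and the whole sum vanishes. For the first sum, apply the same reduction to $g(sj) h(tj)$ and then combine $f(a+2j)$ with each of the resulting factors by a further product-to-sum step. This produces terms of the form $f'\bigl(a + (2 \pm (s\pm t))j\bigr)$ in which the new step $2 \pm (s\pm t)$ is even and has absolute value at most $2 + 2(n-1) = 2n$, so part~(1) again closes the argument.

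The main obstacle is purely bookkeeping: at each reduction one must check that the hypotheses of~$(\bigstar)$ and of part~(1) are preserved, i.e.\ that the step sizes remain even and bounded in absolute value by~$2n$ and that the denominators $\lambda(d)$ never vanish. The bound $\max(|s|,|t|) \le n-1$ in part~(2) is \emph{tight} in this scheme: it leaves exactly enough room to absorb the extra shift of~$2$ coming from $f(a+2j)$ and still stay within $|m| \le 2n$.
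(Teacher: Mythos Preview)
Your proof is correct. For part~(2) your argument coincides with the paper's: both expand the product $g(sj)h(tj)$ (and then $f(a+2j)$ times each resulting term) via product-to-sum, landing on a linear combination of terms $f_k(a+m_kj)$ with $m_k$ even and $|m_k|\le 2n$, to which part~(1) applies.

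For part~(1) you take a genuinely different route. The paper simply splits the sum by the parity of~$j$,
\[\sum_{j=0}^{2n+1}(-1)^j f(a+mj)=\sum_{j=0}^n f(a+2mj)-\sum_{j=0}^n f(a+m+2mj),\]
and applies~$(\bigstar)$ with $d=m$ and $N=n$ to each half (the case $m=0$ being trivial); the numerator $\lambda((n+1)m)$ vanishes because $m$ is even. Your device of writing $(-1)^j=\kappa(2(n+1)j)$ and absorbing it by a product-to-sum step is equally valid and arguably more uniform, since it treats the alternating sign on the same footing as the other trigonometric factors; the price is the extra range check $1\le|d_\pm|\le 2n+1$. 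Both arguments ultimately rest on~$(\bigstar)$, and neither is materially shorter than the other.
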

\begin{proof}
  \begin{enumerate}
  \item We have
    \begin{align*}
      \sum_{j=0}^{2n+1} (-1)^j f(a+mj)
      =
      \sum_{j=0}^n f(a+2mj) - \sum_{j=0}^n f(a+m + 2mj).
    \end{align*}
    If~$m=0$ then this difference is clearly~$0$. If~$m\ne 0$, then
    $|m|<2(n+1)$ gives $\lambda(m)\ne0$ and $\lambda((n+1)m)=0$
    since~$m$ is even, so by equation ~\eqref{eq:bigstar} we have \[\sum_{j=0}^n
    f(a+2mj)=\sum_{j=0}^n f(a+m + 2mj)=0. \]
  \item Using the product-to-sum trigonometric identities, we can write
    \[ f(a+2j)g(sj)h(tj) = \frac14\sum_{k=0}^3f_k(a+m_kj)\]
    where~$f_k\in \{\pm \kappa,\pm \lambda\}$ and \[m_k = 2+(-1)^ks +
    (-1)^{\lfloor k/2\rfloor} t.\] Since~$m_k$ is even and~$|m_k|\leq
    2+ |s|+|t|\leq 2n$, the first equality follows from~(1). The
    second equality is proven using a simplification of the same
    argument.\qedhere
  \end{enumerate}
\end{proof}

Let~$\rho$ be the $2\times 2$ rotation matrix\[\rho=
\begin{bmatrix}
  \kappa(1)&-\lambda(1)\\\lambda(1)&\kappa(1)
\end{bmatrix}.\]Note that for~$s\in\bZ$, we have
\[ \rho^s =
\begin{bmatrix}
  \kappa(s)&-\lambda(s)\\\lambda(s)&\kappa(s)
\end{bmatrix}\] so that, in particular, each entry of~$\rho^s$ is
of the form~$g(s)$ for some~$g\in \{\kappa,\pm\lambda\}$.  Define
an~$n\times n$ orthogonal matrix~$W$ by
\[ W=
\begin{cases}
\rho\oplus\rho ^{3}\oplus \rho^5\oplus\dots\oplus\rho^{n-1 }  &\text{if~$n$ is even,}\\
[1]\oplus \rho^2\oplus\rho ^{4}\oplus\dots\oplus\rho^{n-1}  &\text{if~$n$ is odd.}
\end{cases}\] Here, $[1]$ is the $1\times 1$ matrix whose entry
is~$1$ and $\oplus$ is the block-diagonal direct sum of matrices.
Let $v$ be the $n\times 1$ vector
\[
v=
\begin{cases}
  [1\ 0\ 1\ 0\ \dots\ 1\ 0]^*&\text{if~$n$ is even,}\\
  [1\ 1\ 0\ 1\ 0\ \dots\ 1\ 0]^*&\text{if~$n$ is odd.}
\end{cases}
\]
For $j\in\bZ$, let $q_j=W^j v$, and
consider the rank one operators \[ Q_j = q_jq_j^*.\]
We write $\conv(S)$ for the convex hull of a subset~$S$ of a vector space.
\begin{prop}\label{prop:conv}
  Consider the real numbers~$t_j=\kappa(1)-\kappa(3+4j)$ for~$0\leq j\leq n$.
  \begin{enumerate}
  \item $t_j>0$ for~$0\leq j< n$ and $t_n=0$.\\[-3pt]
  \item $\displaystyle\sum_{j=0}^{2n+1}(-1)^j \kappa(a+2j)Q_j=0=\displaystyle\sum_{j=0}^{2n+1}(-1)^j Q_j$
    for any~$a\in\bR$.\\[6pt]
  \item
    $\displaystyle\sum_{j=0}^{n-1}t_jQ_{2j}=\sum_{j=0}^{n-1}t_jQ_{2(n-j)-1}$.\\[-3pt]
  \item $\conv(\{Q_0,Q_2,\dots,Q_{2(n-1)}\})\cap
    \conv(\{Q_1,Q_3,\dots,Q_{2n-1}\})\ne\emptyset$.
  \end{enumerate}
\end{prop}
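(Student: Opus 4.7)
The plan is to treat the four parts in cascade: (1) is a trigonometric verification, (2) follows from the block structure of~$W$ combined with Lemma~\ref{lem:altzero}(2), and (3)--(4) are algebraic consequences of~(2). For~(1), the identity $(4n+4)\theta_{n+1}=2\pi$ yields $(4n+3)\theta_{n+1}=2\pi-\theta_{n+1}$, so $t_n=0$; for $0\le j<n$ the angle $(3+4j)\theta_{n+1}$ lies strictly between $\theta_{n+1}$ and $2\pi-\theta_{n+1}$, hence its cosine is strictly less than $\kappa(1)$, giving $t_j>0$.

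The substantive step is~(2). Each entry of $\rho^s$ has the form $g(s)$ for some $g\in\{\kappa,\pm\lambda\}$, and $W$ is block-diagonal with exponents $s$ running through $\{1,3,\dots,n-1\}$ when $n$ is even, and through $\{0,2,\dots,n-1\}$ when $n$ is odd (counting the initial $[1]=\rho^0$ block as $s=0$). The $0,1$-pattern of $v$ is arranged so that $v$ restricts to the first basis vector of each block; therefore every entry of $W^jv$ has the form $g(sj)$. It follows that every entry of $Q_j=(W^jv)(W^jv)^*$ is a product $g(sj)h(tj)$ with $s,t$ sharing a common parity and satisfying $\max\{|s|,|t|\}\le n-1$, which is exactly the hypothesis of Lemma~\ref{lem:altzero}(2); we apply that lemma entrywise, with $f=\kappa$ for the first identity and without the $f$ factor for the second. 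Getting this bookkeeping right in both parity cases of $n$ is the main obstacle.

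For~(3), re-index the right-hand side by $k=n-j$; using $(4n+4)\theta_{n+1}=2\pi$, the coefficient $\kappa(3+4(n-k))$ rewrites as $\kappa(4k+1)$. Setting $m=2j$ on the left ($m$ even, $0\le m\le 2n-2$) and $m=2k-1$ on the right ($m$ odd, $1\le m\le 2n-1$), the difference LHS~$-$~RHS becomes $\sum_{m=0}^{2n-1}(-1)^m(\kappa(1)-\kappa(2m+3))Q_m$. The missing terms at $m=2n,2n+1$ vanish individually since $\kappa(4n+3)=\kappa(4n+5)=\kappa(1)$, so extending the sum to $m=0,\dots,2n+1$ expresses this difference as $\kappa(1)\sum_m(-1)^mQ_m-\sum_m(-1)^m\kappa(2m+3)Q_m$, which is zero by~(2) (the second sum requires $a=3$). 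Finally~(4) is immediate: $T=\sum_{j=0}^{n-1}t_j$ is strictly positive by~(1), so dividing the identity of~(3) by~$T$ realises one operator as both a convex combination of $\{Q_0,Q_2,\dots,Q_{2(n-1)}\}$ and a convex combination of $\{Q_1,Q_3,\dots,Q_{2n-1}\}$.
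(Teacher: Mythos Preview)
Your proof is correct. Parts~(1), (2) and~(4) are essentially identical to the paper's treatment (the paper dismisses~(1) as ``clear'' and argues~(2) by the same entrywise application of Lemma~\ref{lem:altzero}(2), noting that the block exponents satisfy $s_k\equiv n-1\pmod2$ and $|s_k|\le n-1$).

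Your argument for~(3), however, differs from the paper's. The paper first uses the covariance $W^{\ell}Q_jW^{-\ell}=Q_{j+\ell}$: conjugating the identity $\sum_{j}(-1)^j\kappa(1+2j)Q_j=0$ by $W^{-1}$ shifts the index to obtain $\sum_{j=-1}^{2n}(-1)^{j+1}\kappa(3+2j)Q_j=0$, and after splitting into even and odd $j$ and invoking the periodicity $Q_{-1}=Q_{2n+1}$ (from $W^{2(n+1)}=(-1)^{n+1}I$) this yields $\sum_{j=0}^n\kappa(3+4j)Q_{2j}=\sum_{j=0}^n\kappa(3+4j)Q_{2(n-j)-1}$; subtracting the corresponding identity with constant coefficient $\kappa(1)$ then gives~(3). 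You instead rewrite $\text{LHS}-\text{RHS}$ directly as the single alternating sum $\sum_{m=0}^{2n-1}(-1)^m(\kappa(1)-\kappa(2m+3))Q_m$, observe that the $m=2n,2n+1$ terms vanish because $\kappa(4n+3)=\kappa(4n+5)=\kappa(1)$, and then apply~(2) with $a=3$. Your route is a little more elementary in that it never uses the conjugation shift or the periodicity $Q_{-1}=Q_{2n+1}$; the paper's route, on the other hand, makes the underlying symmetry (the $W$-action permuting the $Q_j$) more visible. Both are short and valid.
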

\begin{proof}
  \begin{enumerate}
  \item This is clear.
  \item The~$k$th entry of~$q_j$ has the form $g_k(s_kj)$
    where~$g_k\in \{\kappa,\pm\lambda\}$ and~$s_k\in\bZ$ with
    $|s_k|\leq n-1$ and $s_k\equiv n-1\pmod2$. Hence the~$(k,\ell)$
    entry of~$Q_j$ is $g_k(s_kj)g_\ell(s_\ell j)$, so the claim
    follows from Lemma~\ref{lem:altzero}(2).
  \item For~$\ell\in\bZ$, we have~$W^\ell Q_j W^{-\ell} =
    Q_{j+\ell}$. By~(2),
    \[ W^{-1} \left(\sum_{j=0}^{2n+1}(-1)^j \kappa(1+2j)Q_j\right)W =
    \sum_{j=-1}^{2n} (-1)^{j+1}\kappa(3+2j) Q_j=0.\] Rearranging,
    reindexing and using the identity $\kappa(4(n+1)-x)=\kappa(x)$ gives    
    \begin{align*}
      \sum_{j=0}^n \kappa(3+4j)Q_{2j} = \sum_{j=0}^n \kappa(3+4j)Q_{2(n-j)-1}.
    \end{align*}
    We have~$W^{2(n+1)}=(-1)^{n+1}I$, so
    \[Q_{-1}=W^{2(n+1)}Q_{-1}W^{-2(n+1)}=Q_{2n+1}.\] By the second
    equality in~(2),
    \[ \sum_{j=0}^n \kappa(1)Q_{2j} = \sum_{j=0}^n \kappa(1)Q_{2(n-j)-1}.\]
    Taking differences gives
    \[ \sum_{j=0}^n t_j Q_{2j} = \sum_{j=0}^n t_j Q_{2(n-j)-1}.\]
    Since~$t_n=0$, this is the desired identity.
  \item This is immediate from~(1) and~(3).\qedhere
  \end{enumerate}
\end{proof}

For~$1\leq j\leq n-1$, let
\[ r_j =
\begin{cases}
  \sqrt{\tfrac2{n+1}}&\text{if~$j=1$ and~$n$ is odd},\\
  \sqrt{\tfrac{4\kappa(j)}{n+1}}&\text{otherwise}.
\end{cases}\]
Let $r$ be the $n\times 1$ vector
\[
r=
\begin{cases}
  [r_1\ 0\ r_3\ 0\ \dots\ r_{n-1}\ 0]^*&\text{if~$n$ is even,}\\
  [r_1\ r_2\ 0\ r_4\ 0\ \dots\ r_{n-1}\ 0]^*&\text{if~$n$ is odd.}
\end{cases}
\]
A calculation using equation~\eqref{eq:bigstar} gives
\[ \|r\|^2= \frac2{n+1}\cot \theta_{n+1}.\] 
Consider the rank one operators
\[ P_j=W^jr(W^jr)^*\] for~$j\in\bZ$. 

\begin{rk}\label{rk:PQ}
  The diagonal matrix
  \[D=
  \begin{cases}
    \diag(r_1,r_1,r_3,r_3,\dots,r_{n-1},r_{n-1})&\text{if $n$ is even}\\
    \diag(r_1,r_2,r_2,r_4,r_4,\dots,r_{n-1},r_{n-1})&\text{if $n$ is odd}
  \end{cases}
\] commutes with~$W$
  and $Dv=r$; hence $DQ_jD=P_j$. Since~$D$ is invertible, it follows
  that for any finite collection of scalars~$t_j$ we have
  \[ \sum_j t_jP_j=0\iff \sum_j t_j Q_j=0.\]
\end{rk}

Let~$R=[r\ W^2r\ W^4r\ \dots\ W^{2(n-1)}r]$ and let~$S=WR$.  Also
let~$\tilde R=[R\ W^{2n}r]$ and let~$\tilde S=W\tilde R$. Let us
write~$X_i$ for the $i$th column of a matrix~$X$. Since~$W$ is an
isometry, for $1\leq i,j\leq n+1$ we have
\[ \|\tilde R_j\|^2=\|\tilde S_i\|^2=\|r\|^2%
= \frac2{n+1}\cot\theta_{n+1}.\] Let
$\tilde B$ be the $(n+1)\times (n+1)$ matrix whose $(i,j)$ entry is
  \[b_{ij}=
  \begin{cases}
    1&\text{if~$j-i\in \{0,1\}$,}\\
    (-1)^{n+1}&\text{if~$(i,j)=(n+1,1)$,}\\
    0&\text{otherwise}.
  \end{cases}
  \]
  Let~$B$ be the upper-left $n\times n$ corner of~$\tilde B$ and
  let~$B'$ consist of the first~$n$ rows of~$\tilde B$. Observe that
  $G(B)=\Sigma(n,n)$, $G(B')=\Sigma(n,n+1)$ and if~$n$ is odd,
  then~$G(\tilde B)=\Lambda(n+1)$.

\begin{prop}\label{prop:upperbound}
  We have $S^*R=B$ and $\tilde S^*\tilde R=\tilde B$.
\end{prop}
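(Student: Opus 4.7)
The plan is to verify each matrix entry by direct expansion, reducing the task to a trigonometric sum identity. Since $S_i = WR_i = W^{2i-1}r$ and $R_j = W^{2(j-1)}r$, and $W$ is real orthogonal, we have
\[
(S^*R)_{ij} = r^T W^{2(j-i)-1} r,
\]
with the analogous formula for $(\tilde S^*\tilde R)_{ij}$ valid for $1 \le i, j \le n+1$. Writing $\psi(s) := r^T W^s r$, the proposition reduces to the assertion that, for odd $s$, we have $\psi(\pm 1) = 1$, $\psi(s) = 0$ whenever $3 \le |s| \le 2n - 1$, and $\psi(\pm(2n+1)) = (-1)^{n+1}$. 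The orthogonality of $W$ gives $\psi(-s) = \psi(s)$, and the identity $W^{2(n+1)} = (-1)^{n+1}I$ from the proof of Proposition~\ref{prop:conv}(3) yields $\psi(\pm(2n+1)) = (-1)^{n+1}\psi(\pm 1)$, so it suffices to evaluate $\psi(s)$ for $s \in \{1, 3, 5, \ldots, 2n-1\}$.

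To compute $\psi(s)$, I would expand using the block-diagonal structure of $W$ and the support pattern of $r$. After substituting the prescribed values of $r_j^2$ and applying the product-to-sum identity $2\kappa(a)\kappa(b) = \kappa(a+b) + \kappa(a-b)$, $\psi(s)$ becomes a $\tfrac{2}{n+1}$-scaled linear combination of sums of the form $\sum_k \kappa((2k-1)u)$ (if $n$ is even) or $\sum_k \kappa(2ku)$ (if $n$ is odd), taken at $u = s \pm 1$, plus an additive constant of $1$ in the $n$-odd case. Each of these sums is evaluated directly by~\eqref{eq:bigstar}: whenever $\lambda(u) \ne 0$, the closed form reduces to $\lambda(nu)/(2\lambda(u))$ or $\lambda(nu)/(2\lambda(u)) - \tfrac{1}{2}$, respectively.

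The key trigonometric simplification is that for a non-zero even integer $u = 2c$ with $0 < |c| < n+1$, we have $\lambda(nu) = (-1)^{c+1}\lambda(u)$, so the ratio $\lambda(nu)/\lambda(u)$ equals $\pm 1$. A short case analysis on the parity of $\ell := (s-1)/2$ then shows that in both parities of $n$, the bracket in the formula for $\psi(s)$ vanishes identically whenever $s$ is odd with $3 \le |s| \le 2n - 1$. For $s = \pm 1$, one of the arguments is $u = 0$, contributing the constant $n/2$ for $n$ even or $(n-1)/2$ for $n$ odd; these values combine with the remaining terms (and, for $n$ odd, the additive constant $1$) and with the prefactor $\tfrac{2}{n+1}$ to give $\psi(\pm 1) = 1$. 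The main obstacle is simply the careful case analysis between the parities of $n$ and, for $\tilde S^*\tilde R$, the boundary argument $u = \pm 2(n+1)$ where $\lambda(u) = 0$ and~\eqref{eq:bigstar} does not apply directly; this is handled by a short direct computation using $\cos((2k-1)\pi) = -1$ or $\cos(2k\pi) = 1$.
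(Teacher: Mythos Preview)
Your proposal is correct and follows essentially the same route as the paper: both reduce the $(i,j)$ entry to the quantity $\langle W^{k}r,r\rangle$ for odd $k$, use $W^{2(n+1)}=(-1)^{n+1}I$ and the evenness of this function to restrict to $k\in\{1,3,\dots,2n-1\}$, expand via the block structure of $W$ and product-to-sum, evaluate with~\eqref{eq:bigstar}, and finish with the simplification $\lambda(n(k\pm1))/\lambda(k\pm1)=\pm1$ giving cancellation. One small remark: after your reduction to $s\in\{1,3,\dots,2n-1\}$ the argument $u=\pm 2(n+1)$ never actually occurs (the only degenerate case is $u=0$ at $s=1$), so the final sentence about that boundary is unnecessary.
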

\begin{proof}
  Since~$S^*R$ is the upper-left $n\times n$ corner of~$\tilde
  S^*\tilde R$, it suffices to show that~$\tilde S^* \tilde R=\tilde
  B$. Let~$k=|2(i-j)+1|$, a positive odd integer. We have
  \[(\tilde S^* \tilde R)_{i,j} = \langle \tilde R_j,\tilde S_i\rangle
  = \langle W^{2(j-1)}r,W^{2(i-1)+1}r\rangle=\langle
  W^{k}r,r\rangle.\] Since~$W^{2(n+1)}=(-1)^{n+1}I$, the $(n+1,1)$
  entry of~$\tilde S^*\tilde R$ is
  \[ \langle W^{2n+1}r,r\rangle = (-1)^{n+1}\langle W^*r,r\rangle =
  (-1)^{n+1}\langle Wr,r\rangle.\] It therefore only remains to show that
  \[
  \langle W^k r,r\rangle =
  \begin{cases}
    1&\text{if }k=1\\
    0&\text{if $k$ is odd with $3\leq k\leq 2n-1$}.
  \end{cases}
  \]
  We prove this by direct calculation, giving the details for
  even~$n$; the calculation for odd~$n$ is very similar.  We have
  \begin{align*}
    \langle Wr,r\rangle &= \sum_{j=0}^{n/2-1} r_{1+2j}^2 \kappa(1+2j) 
    \\
    &= \frac4{n+1} \sum_{j=0}^{n/2-1} \kappa(1+2j)^2\\
    &= \frac2{n+1} \sum_{j=0}^{n/2-1} 1+\kappa(2+4j)\\
    &=\frac2{n+1}\left(\frac n2 + \frac12\right) = 1.
  \end{align*}
  Here we have used equation~\eqref{eq:bigstar} to perform the summation in the
  penultimate line.  If~$3\leq k\leq 2n-1$ and~$k$ is odd, then
  \begin{align*}
    \langle W^kr,r\rangle &=
    \sum_{j=0}^{n/2-1} r_{1+2j}^2\, \kappa(k(1+2j))\\
    &=
    \frac 4{n+1} \sum_{j=0}^{n/2-1} \kappa(1+2j)\,\kappa(k(1+2j))\\
    &=
    \frac 2{n+1} \sum_{j=0}^{n/2-1} \kappa((k-1)(1+2j))+\kappa((k+1)(1+2j))\\
    &=\frac1{n+1}\left(
    \frac{ \lambda(n(k-1))}{\lambda(k-1)} + \frac{\lambda(n(k+1))}{\lambda(k+1)}\right).
  \end{align*}
  Since~$k$ is odd, $(k\pm1)(n+1)\theta_{n+1}=\frac{k\pm1}2 \pi$ is an
  integer multiple of~$\pi$ and
  \[\lambda(n(k\pm1))=\lambda((k\pm1)(n+1)-(k\pm1))=(-1)^{(k\mp1)/2}\lambda(k\pm1),\]
  so
  \[\langle W^kr,r\rangle=\frac{1}{n+1}\left( (-1)^{(k+1)/2} +
    (-1)^{(k-1)/2}\right)=0.\qedhere\]
\end{proof}

\begin{proof}[Proof of Theorem~\ref{thm:snakes}]
  By Proposition~\ref{prop:conv} and Remark~\ref{rk:PQ}, there are two
  sets of positive scalars~$\{a_j\}_{j=1}^n$ and~$\{b_j\}_{j=1}^n$,
  each summing to~$1$, so that
  \[\sum_{j=1}^n a_j R_jR_j^* = \sum_{j=1}^n b_j S_jS_j^*.\]
  The $n\times n$ diagonal matrices~$X=\diag(\sqrt a_j)$
  and~$Y=\diag(\sqrt b_j)$ have
  \[ RX(RX)^* = SY(SY)^*,\] so there is a unitary matrix~$U$
  with~$RX=SYU$. (Indeed,~$B$ and~$Y$ are both invertible, so~$SY$ is
  invertible and~$U=RX(SY)^{-1}$ has real entries and is an orthogonal
  matrix).  As shown in~\cite{ang-cow-nar}, this implies that the
  factorisation~$B=S^*R$ attains the Haagerup bound. Indeed, the unit
  vectors~$x=[\sqrt a_j]_{1\leq j\leq n}$ and~$y=[\sqrt{b_j}]_{1\leq
    j\leq n}$ satisfy
  \begin{align*}
    \langle (B\schur U^t)x,y\rangle &= \trace((SY)^*RXU) =
  \trace(S^*SYY^*) \\&= \sum_{j=1}^n \|S_j\|^2|y_j|^2 =
  c(S)^2=c(S)c(R),
\end{align*}
so by Proposition~\ref{prop:upperbound},
\begin{align*}
  \tfrac2{n+1}\cot\theta_{n+1} = c(S)c(R) &\leq \|B\schur U^t\|\\&\leq
\|B\|_{\schur} \leq \|\tilde B\|_\schur \leq c(\tilde S)c(\tilde
R)=\tfrac 2{n+1}\cot\theta_{n+1}.
\end{align*}
Hence $\|\Sigma(n,n)\|=\|B\|_\schur=\|\Sigma(n,n+1)\|=\|\tilde
B\|_\schur=\frac 2{n+1}\cot \theta_{n+1}$.
\end{proof}
\goodbreak

\section{Calculations and estimates of small norms}\label{sec:calcs}

In this section, we calculate or estimate the norms of some particular
idempotent Schur multipliers. Our first result is
Proposition~\ref{prop:smallnorms}, in which we find the exact norms of
some idempotent Schur multipliers in low dimensions.  We then find
lower bounds for the norms of some other Schur idempotents
which we will use to establish Theorem~\ref{thm:gaps} in the following
section.%

\begin{prop}\label{prop:smallnorms}\leavevmode
  \begin{enumerate}
  \item $\|\gqqq\|=\eta_1=1$.
  \item $\|\snakeww\|=\|\snakewe\|=\eta_2=\sqrt{4/3}\approx 1.15470$.

  \item $\|\geeq\|=\|\snakeer\|=\|\looprr\|=\eta_3=\frac{1+\sqrt2}2\approx 1.20711$.

  \item $\|\geew\|=\|\getq\|=\eta_4=\frac1{15}\sqrt{169+38\sqrt{19}}\approx 1.21954$.

  \item\label{prop:smallnorms:sqrt32}
    $\|\grew\|=\eta_5=\sqrt{3/2}\approx 1.22474$.

  \item $\|\snakerr\|=\|\snakert\|=\eta_6=\tfrac25\sqrt{5+2\sqrt5}\approx 1.23107$.

  \item\label{prop:smallnorms:trie}
    $\|\geee\|=\frac1{15}(9+4\sqrt6)\approx 1.25320$.

  \item $\|\geer\|=9/7\approx 1.28571$.

  \item $\|\geet\|=4/3\approx 1.33333$.
  \end{enumerate}
\end{prop}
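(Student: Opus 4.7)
The plan is to dispose quickly of those graphs in the list that are maximal paths or cycles, and then to construct explicit Haagerup factorisations for the remaining graphs.

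First, I will apply Theorem~\ref{thm:snakes} and the Davidson--Donsig formula~\eqref{eq:davdon-loops}. The graph $\gqqq$ is the path $\Sigma(1,1)$, and each of $\snakeww, \snakewe, \geeq, \snakeer, \snakerr, \snakert$ is a maximal simple path $\Sigma(n,n)$ or $\Sigma(n,n+1)$ with $n\in\{2,3,4\}$; Theorem~\ref{thm:snakes} evaluates these to $\frac{2}{n+1}\cot\theta_{n+1}$, which simplify to $\eta_1,\eta_2,\eta_3$, and $\eta_6$ respectively. The graphs $\looprr=\Lambda(4)$ and $\geet=\Lambda(3)$ are cycles, so~\eqref{eq:davdon-loops} gives $\|\looprr\|=\frac{1}{2}\cot\theta_4=\eta_3$ and $\|\geet\|=\frac{2}{3}\csc\theta_3=\frac{4}{3}$. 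This dispatches parts (1), (2), (3), (6) and (9).

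For each remaining graph $G$---namely $\geew$ and $\getq$ in (4), $\grew$ in (5), $\geee$ in (7), and $\geer$ in (8)---I will produce an explicit factorisation $M(G)=S^*R$ with $c(S)c(R)$ equal to the stated norm, together with a matching lower bound. The efficient route mirrors the argument at the end of the proof of Theorem~\ref{thm:snakes}: find probability weights $x,y$ and an orthogonal matrix $U$ satisfying $RX=SYU$ with $X=\diag(\sqrt{x_j})$ and $Y=\diag(\sqrt{y_j})$, so that the trace identity $\langle(M(G)\schur U^t)x,y\rangle=c(S)c(R)$ forces $\|M(G)\|_\schur=c(S)c(R)$ in one stroke. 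Concretely, I will make a low-rank ansatz (rank at most $\min\{m,n\}$) for the columns of $R$ and $S$ respecting the support pattern of $M(G)$, impose the saturation equation, and solve for the few free parameters. I note also that $\geew$ is an induced subgraph of $\getq$ (take the three middle columns), so Proposition~\ref{prop:basic-graphs}(2) reduces part (4) to proving the upper bound for $\getq$ and the lower bound for $\geew$.

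For the $3\times 3$ graphs in (7) and (8), the analysis reduces to maximising the top singular value of a $3\times 3$ matrix with a small number of free entries, an elementary calculation. The main obstacle will be part (4): the nested radical $\eta_4=\frac{1}{15}\sqrt{169+38\sqrt{19}}$ signals that the optimising parameter is algebraic of degree at least four over~$\bQ$ (satisfying an irreducible polynomial with coefficients in $\bQ(\sqrt{19})$), so the ansatz must be chosen judiciously and the resulting algebra handled with care. Coaxing the two non-isomorphic graphs $\geew$ and $\getq$ into producing precisely the same irrational optimum is the delicate point of the proposition.
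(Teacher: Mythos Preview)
Your plan is correct and mirrors the paper's proof almost exactly: the path and cycle cases are dispatched via Theorem~\ref{thm:snakes} and~\eqref{eq:davdon-loops}, and for each remaining graph the paper exhibits an explicit factorisation together with an orthogonal (or coisometric) witness for the lower bound, using the same induced-subgraph reduction $\geew\leq\getq$ in part~(4). The only cosmetic difference is that for the upper bound in~(4) the paper packages the Haagerup factorisation as a rank-$3$ positive semidefinite completion $\begin{smallbmatrix}P&B\\B^*&Q\end{smallbmatrix}$ and invokes the Paulsen--Power--Smith criterion, rather than displaying $S$ and~$R$ separately.
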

\begin{proof}
  (1) is trivial, and assertions (2), (3) and~(6) are consequences of
  Theorem~\ref{thm:snakes} and equation~\eqref{eq:davdon-loops}.
  \begin{enumerate}\addtocounter{enumi}{3}
    
  \item 

    Let $B=\begin{smallbmatrix}
      1 & 0 & 0 & 1 & 0 \\
      1  & 1 & 1 &  0 & 0 \\
      0 & 0 & 1 & 0 & 1
    \end{smallbmatrix}\simeq M(\getq)$.
    Consider the matrices \[ %
    P=\begin{bmatrix}
      \eta  & \alpha  & \beta\\
      \alpha  & \eta  & \alpha\\
      \beta  & \alpha  & \eta \\
    \end{bmatrix}\quad\text{and}\quad
    Q=
    \begin{bmatrix}
      \eta  & \gamma  & \delta  & \sigma  & \tau  \\
      \gamma  & \eta  & \gamma  & -\sigma  & -\sigma  \\
      \delta  & \gamma  & \eta  & \tau  & \sigma  \\
      \sigma  & -\sigma  & \tau  & 2 \sigma  & \alpha  \\
      \tau & -\sigma & \sigma & \alpha & 2 \sigma
    \end{bmatrix}
    \] where $\eta=\eta_4$ and
  \begin{align*}
  \alpha&=  \frac{1}{15} \sqrt{139-22 \sqrt{19}},&
  \qquad
  \beta&=-\frac{1}{15} \sqrt{24-2 \sqrt{19}},\\
  \gamma&=\frac{2}{15} \sqrt{16+2\sqrt{19}},&
  \delta&=\frac{1}{15} \sqrt{424-82 \sqrt{19}},\\
  \sigma&=\frac1{15}\sqrt{61+2 \sqrt{19}},
  &
  \tau&=-\frac{1}{15} \sqrt{256-58 \sqrt{19}}.
\end{align*}
One can check with a computer algebra system that~$C=\left[\begin{smallmatrix} P & B \\ B^* & Q
  \end{smallmatrix}\right]$ has rank~$3$ and
its non-zero eigenvalues are positive, so~$C$ is positive
semidefinite.  The maximum diagonal entry of~$C$
is $\max\{\eta,2\sigma\}=\eta$, so~$\|\getq\|\leq \eta$ by~\cite{pps}
(see also~\cite[Exercise~8.8(v)]{paulsen-book}).

On the other hand,
    \[ U=\frac1{15}
    \begin{bmatrix}
        8+\sqrt{19} & -\sqrt{74-2 \sqrt{19}} & -7+\sqrt{19} \\
        \sqrt{74-2 \sqrt{19}} & 1+2 \sqrt{19} & \sqrt{74-2 \sqrt{19}} \\
        -7+\sqrt{19} & -\sqrt{74-2 \sqrt{19}} & 8+\sqrt{19}
      \end{bmatrix}
    \]
    is orthogonal, and if~$B=
    \begin{smallbmatrix}
        1 & 0 & 0\\
  1  & 1 & 1 \\
  0 & 0 & 1
    \end{smallbmatrix}\simeq M(\geew)$, then
    $\|B\schur U\|=\eta\leq \|\geew\|$. Since~$\geew\leq \getq$, this
    shows that $ \eta\leq \|\geew\|\leq \|\getq\|\leq \eta$ and we
    have equality.

  \item 
    Let
    \[
    S=\frac1{2\cdot 54^{1/4}}
    \begin{smallbmatrix}
      2\sqrt6&2\sqrt6&2\sqrt6\\-2\sqrt3&\sqrt3&\sqrt3\\
      0&3&-3
    \end{smallbmatrix}
    \quad\text{and}\quad
    R=\frac1{54^{1/4}}
    \begin{smallbmatrix}
      3&1&1&1\\
      0&-2\sqrt2&\sqrt2&\sqrt2\\
      0&0&\sqrt6&-\sqrt6    
    \end{smallbmatrix}.
    \]
    Then $S^*R=\begin{smallbmatrix}
        1&1&0&0\\1&0&1&0\\1&0&0&1
      \end{smallbmatrix}\simeq M(\grew)$, so $\|\grew\|\leq
    c(S)c(R)=\sqrt{3/2}$. On the other
    hand, consider
    \[ V=
    \frac14\begin{bmatrix}
      \sqrt 5 & 3 & -1 & -1 \\
      \sqrt 5 & -1 & 3 & -1 \\
      \sqrt 5 & -1 & -1 & 3 
    \end{bmatrix}.
    \]
    It is easy to see that~$V$ is a coisometry with $
    \|\begin{smallbmatrix}
        1&1&0&0\\1&0&1&0\\1&0&0&1
      \end{smallbmatrix}\schur V\|=\sqrt{3/2}$. Hence
    $\|\grew\|\geq \sqrt{3/2}$.

    \stepcounter{enumi}
  \item Consider
    \[ S=
    \begin{bmatrix} 1&1&1/2\\a&-a&b\\-a&a&c
    \end{bmatrix} \quad\text{and}\quad R=
    \begin{bmatrix} 1&1&1/2\\a&-a&b\\a&-a&-c
    \end{bmatrix}
    \] where
    \[a=\sqrt{\tfrac1{15}(-3+2\sqrt6)},\ b=\tfrac{1}{2}
    \sqrt{\tfrac{1}{15} (3+8 \sqrt{6})}\text{ and }
    c=\sqrt{\tfrac{1}{30} (9+4 \sqrt{6})}.\] Since $a(b+c)=\frac12$ and $b^2-c^2=-\tfrac14$, we have $S^*R=
    \begin{smallbmatrix} 1&1&1\\1&1&0\\1&0&0
      \end{smallbmatrix}\simeq M(\trie)$, so $\|\trie\|\leq
    c(S)c(R)=\frac1{15}(9+4\sqrt6)$.  On the other hand, calculations may
    be performed to show that the matrix
    \[ U=\frac1{15} \begin{bmatrix}
       9-\sqrt{6} & \sqrt{54-6 \sqrt{6}} & 2 \sqrt{21
          + 6 \sqrt6} \\ \sqrt{54-6 \sqrt{6}} & 3 (1+\sqrt{6}) & -2
        \sqrt{27-3 \sqrt{6}} \\ 2 \sqrt{21 + 6 \sqrt6} & -2 \sqrt{27-3
          \sqrt{6}} & 3-2 \sqrt{6}
      \end{bmatrix}
      \] is orthogonal, and $\|
    \begin{smallbmatrix} 1&1&1\\1&1&0\\1&0&0
      \end{smallbmatrix}\schur U\|=\frac1{15}(9+4\sqrt6)$.

  \item
    Let
    \[ S=\frac1{\sqrt{14}}
      \begin{bmatrix}
        3 & 4 & 3 \\
        -\sqrt{2} & \sqrt{2} & -\sqrt{2} \\
        -\sqrt{7} & 0 & \sqrt{7}
      \end{bmatrix}
    \quad\text{and}\quad
    R=\frac1{\sqrt{14}}
      \begin{bmatrix}
        3 & 4 & 3 \\
        \sqrt{2} & -\sqrt{2} & \sqrt{2} \\
        -\sqrt{7} & 0 & \sqrt{7}
      \end{bmatrix}
    .\]
    Then $S^*R=
    \begin{smallbmatrix}
      1&1&0\\1&1&1\\0&1&1
    \end{smallbmatrix}=M(\geer)$, so $\|\geer\|\leq c(S)c(R)=9/7$.
  The matrix
    \[ U=\frac17
      \begin{bmatrix}
        3 & 2 \sqrt{6} & -4 \\
        2 \sqrt{6} & 1 & 2 \sqrt{6} \\
        -4 & 2 \sqrt{6} & 3
      \end{bmatrix}
    \]
    is orthogonal, and $\|\begin{smallbmatrix}
      1&1&0\\1&1&1\\0&1&1
    \end{smallbmatrix}\schur U\|=9/7\leq \|\geer\|$.
  \item This is proven in~\cite[Theorem~2.1]{bcd}, and is also a
    consequence of equation~\eqref{eq:davdon-loops}.\qedhere
  \end{enumerate}
\end{proof}

\begin{rk}
  Since $M(\geee)\simeq
\begin{smallbmatrix}
        1&1&1\\0&1&1\\0&0&1
      \end{smallbmatrix}$, part~(\ref{prop:smallnorms:trie}) of the preceding result
    gives the norm of the upper-triangular truncation map on the
    $3\times 3$ matrices. This result has previously been stated in
    \cite[p.~131]{ang-cow-nar}, but a detailed calculation does not
    appear in that reference.
\end{rk}

\begin{rk}\label{rk:sqrt2}
  Proposition~\ref{prop:smallnorms}(\ref{prop:smallnorms:sqrt32}) may be generalised to show that
  \[\|[\mathbf 1\ I_n]\|_{\schur}=\sqrt{\frac{2n}{n+1}}\]
  where~$\mathbf 1$ is the $n\times 1$ vector of all ones and~$I_n$ is
  the $n\times n$ identity matrix.
  We omit the details.
\end{rk}

\begin{prop}\label{prop:grrc}
  $\|\grrc\|>\|\trie\|$.
\end{prop}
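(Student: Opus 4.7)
The plan is to exhibit an explicit orthogonal matrix $U\in M_4(\bR)$ for which $\|M(\grrc)\schur U\|$ strictly exceeds $\|\trie\|=(9+4\sqrt{6})/15$; since $\|\grrc\|\ge \|M(\grrc)\schur U\|$, this will prove the proposition. My choice of $U$ is guided by the combinatorial symmetry of $\grrc$: the three pendant edges $r_j$--$c_j$ for $j\in\{1,3,4\}$ hang off the hub row $r_2$ and are freely permuted by $S_3$, while column $c_2$ plays a distinguished role (being attached only to the hub). I therefore parameterise an $S_3$-equivariant $U$ by five real numbers $\alpha,\beta,\gamma,\delta,\epsilon$: $\alpha$ on the diagonal of the $3\times3$ pendant block, $\beta$ on its off-diagonal, and $\gamma,\delta,\epsilon$ on the interactions with row $r_2$ and column $c_2$. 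Row-orthonormality of $U$ reduces these to a one-parameter family parameterised by $\alpha\in[1/3,1]$, with $\beta=\alpha-1$ and $\gamma^2=\delta^2=(1-\alpha)(3\alpha-1)$ (the denominator in the expression for $\delta^2$ simplifies identically to $1$ via $3\gamma^2+(3\alpha-2)^2=1$).

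Next I would pick the convenient value $\alpha=5/6$, giving $\beta=-1/6$, $\gamma=\delta=1/2$, $\epsilon=-1/2$ and producing the orthogonal matrix
\[ U_0 \;=\; \tfrac{1}{6}\begin{smallbmatrix}5&3&-1&-1\\3&-3&3&3\\-1&3&5&-1\\-1&3&-1&5\end{smallbmatrix}. \]
Orthogonality of $U_0$ is a direct verification of six inner products. Taking the Schur product zeros out nine entries, leaving $M(\grrc)\schur U_0 = \tfrac{1}{6}\begin{smallbmatrix}5&0&0&0\\3&-3&3&3\\0&0&5&0\\0&0&0&5\end{smallbmatrix}$; moreover the matrix $(M(\grrc)\schur U_0)(M(\grrc)\schur U_0)^T$ retains the $S_3$-symmetry on positions $\{1,3,4\}$, so its top eigenvector has the form $(x,y,x,x)^T$. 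Substituting this ansatz reduces the eigenvalue problem to a $2\times2$ linear system whose characteristic polynomial is $\mu^2-61\mu+225$, and extracting the larger root (and dividing by the scaling factor $36$) gives $\|M(\grrc)\schur U_0\|^2=(61+\sqrt{2821})/72$.

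The final step is the algebraic comparison $(61+\sqrt{2821})/72 > (177+72\sqrt6)/225 = \|\trie\|^2$. Cross-multiplying and cancelling reduces this to $225\sqrt{2821}+981 > 5184\sqrt6$, which I would close using separate rational bounds on the two surds: $53^2=2809<2821$ yields $\sqrt{2821}>53$, so the left side exceeds $225\cdot53+981=12906$, while $49^2=2401>2400$ yields $\sqrt6<49/20$, so the right side is strictly less than $5184\cdot 49/20=12700.8$. I expect the main obstacle to be the narrowness of the margin $\|\grrc\|^2-\|\trie\|^2\approx 0.014$: many natural trial orthogonals---the $4\times4$ Hadamard, or any block-diagonal orthogonal---give Schur norms strictly below $\|\trie\|$, so the $S_3$-symmetric ansatz must be tuned close to its numerical optimum $\alpha\approx0.841$. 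Happily, $\alpha=5/6$ is close enough to that optimum while keeping all of the arithmetic rational.
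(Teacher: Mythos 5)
Your argument is correct and is essentially the paper's: you exhibit an explicit rank-one-reflection orthogonal matrix, compute the top eigenvalue of the Schur product times its transpose exactly, and then compare algebraically with $\|\trie\|^2=\tfrac{1}{225}(177+72\sqrt6)$. In fact your matrix $M(\grrc)\schur U_0$ coincides with the paper's $B\schur U$ up to row/column permutations and diagonal sign changes, so both certificates yield the identical lower bound $\tfrac16\sqrt{\tfrac12\left(61+\sqrt{2821}\right)}$.
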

\begin{proof}
  The matrix~$U=\displaystyle\frac16
  \begin{smallbmatrix}
    3&3&3&3\\
    3&-5&1&1\\
    3&1&-5&1\\
    3&1&1&-5
  \end{smallbmatrix}$ has~$U=2P-I$ where~$P$ is the rank one
  projection onto the linear span of~$\begin{smallbmatrix}
    3\\1\\1\\1
  \end{smallbmatrix}$,  so~$U$ is orthogonal. Clearly~$B=
  \begin{smallbmatrix}
    1&1&1&1\\
    0&1&0&0\\
    0&0&1&0\\
    0&0&0&1
  \end{smallbmatrix}$ has $M(\grrc)\simeq B$, and~$Y=6B\schur U$ has
  \begin{align*}
    \|Y\|^2&=%
    \left\|\begin{smallbmatrix}
        3&3&3&3\\
        0&-5&0&0\\
        0&0&-5&0\\
        0&0&0&-5
      \end{smallbmatrix}
      \begin{smallbmatrix}
        3&0&0&0\\
        3&-5&0&0\\
        3&0&-5&0\\
        3&0&0&-5
      \end{smallbmatrix}\right\|=\left\|
      25I + 
      \begin{smallbmatrix}
        11&-15&-15&-15\\-15&0&0&0\\-15&0&0&0\\-15&0&0&0
      \end{smallbmatrix}\right\|.
  \end{align*}
  Since the norm of~$YY^*$ is its spectral radius, a 
  calculation gives
  \[ \|Y\|^2=\tfrac12(61+\sqrt{2821}).\]
  Hence
    \[\|\grrc\|=\|B\|_{\schur}\geq \tfrac16\|Y\| = \tfrac16\sqrt{\tfrac12(61+\sqrt{2821})}>\tfrac1{15}(9+4\sqrt6) = \|\trie\|.\qedhere\]
\end{proof}

\begin{prop}
  \label{prop:grrx}
  $\|\grrx\|>\|\snakerr\|$.
\end{prop}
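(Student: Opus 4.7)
The plan is to follow the template of Proposition~\ref{prop:grrc}: exhibit an explicit orthogonal matrix $U \in M_4(\bR)$ such that $Y := M(\grrx) \schur U$ satisfies $\|Y\| > \eta_6$. Since $\|U\| = 1$, this gives $\|\grrx\| \geq \|Y\| > \eta_6 = \|\snakerr\|$. Fixing the representation
\[ M(\grrx) \simeq \begin{bmatrix} 1&0&0&0\\1&1&1&0\\0&0&1&1\\0&0&0&1\end{bmatrix}, \]
a quick audit shows that the monotonicity of Proposition~\ref{prop:basic-graphs}(2) alone will not suffice: every $3\times 3$ induced subgraph has at most $5$ edges (up to isomorphism the richest of these are $\snakeee$ and $\geew$, of norms $\eta_3$ and $\eta_4$, both strictly less than $\eta_6$), and the same holds for the proper $3\times 4$ and $4\times 3$ induced subgraphs, none of which matches a graph we have already shown to exceed $\eta_6$. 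Thus an explicit witness $U$ is required.

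By analogy with Proposition~\ref{prop:grrc}, I would first try~$U$ of the reflection form $U = 2vv^* - I$ for a unit vector $v \in \bR^4$, which is automatically orthogonal. With this choice
\[ Y_{ij} = M(\grrx)_{ij}\bigl(2 v_i v_j - \delta_{ij}\bigr), \]
so $\|Y\|^2 = \lambda_{\max}(Y^*Y)$ is a polynomial function of $v$ on the unit sphere. I would find the optimal $v$ numerically and then replace its components with their exact algebraic values (expected to involve nested radicals over~$\bQ(\sqrt 5)$, since $\eta_6$ itself does). The matrix $Y^*Y$ has zero entries in the positions $(1,4)$ and $(2,4)$, so its characteristic polynomial factors and the maximal eigenvalue admits a closed-form expression, analogous to the $\tfrac{1}{2}(61+\sqrt{2821})$ that appears in the preceding proof.

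The main obstacle is not conceptual but algebraic. Since $\|\grrx\|$ appears to lie quite close to~$\eta_6$ (both are near~$1.231$), a purely numerical calculation will not establish strictness, and one must verify in exact arithmetic that the closed-form value of $\|Y\|^2$ exceeds $\eta_6^2 = \tfrac{4}{25}(5 + 2\sqrt 5)$. A computer algebra system, as already invoked in the earlier proofs of this section, will essentially be unavoidable.

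If the reflection ansatz $U = 2vv^* - I$ turns out to be too restrictive---that is, if the maximiser of $X \mapsto \|M(\grrx) \schur X\|$ over the orthogonal group of~$\bR^4$ is not a reflection---I would instead parametrise $U$ as a general $4\times 4$ real orthogonal matrix, for instance via a product of Givens rotations in the six planes of $\bR^4$, and optimise numerically over those six angles before extracting algebraic values. In either case, the output of the plan is an exact factorisation/identity witnessing $\|Y\|^2 > \eta_6^2$, and the proof then reads as the simple two-line computation $\|\grrx\| \geq \|M(\grrx)\schur U\| = \|Y\| > \eta_6 = \|\snakerr\|$.
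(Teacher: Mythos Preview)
Your plan is sound and would work, but the paper takes a different, more economical route. Rather than searching for an orthogonal witness~$U$ in the operator norm (the template you borrow from Proposition~\ref{prop:grrc}, and which the paper does use for Propositions~\ref{prop:grrr} and~\ref{prop:grrz}), here the paper passes to the predual: it exhibits a rank-one operator~$xy^*$ with $\|xy^*\|_1 = 1$ and verifies that $\|B^t \schur (xy^*)\|_1 > 1.235 > \eta_6$, where $\|\cdot\|_1$ is the trace-class norm. Since Schur multiplication by~$B$ on~$\B$ is the Banach-space adjoint of Schur multiplication by~$B^t$ on~$\C_1$, this yields $\|B\|_\schur > \eta_6$ immediately.

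The economy lies in the choice of~$x$ and~$y$: they are precisely the unit vectors constructed in the proof of Theorem~\ref{thm:snakes} for $n=4$, i.e.\ the vectors that certify $\|\snakerr\| = \eta_6$. No fresh optimisation is needed---one simply re-evaluates a witness already in hand against the slightly perturbed pattern~$B$. Your approach would also succeed (the true norm is about $1.241$, so a rigorous numerical bound such as ``$>1.235$'' already separates it from $\eta_6 \approx 1.231$; your worry that exact arithmetic is unavoidable is somewhat overstated), but it trades the paper's recycling of known data for a new search-and-verify step. A minor point: your claim that the characteristic polynomial of $Y^*Y$ factors from the vanishing of the $(1,4)$ and $(2,4)$ entries alone is optimistic, since the fourth row and column remain coupled to the third.
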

\begin{proof}
  Consider the unit vectors~$x$ and~$y$ appearing in the proof of
  Theorem~\ref{thm:snakes} in the case~$n=4$. It turns out that
  \[ x=\frac1{\sqrt5}
  \begin{bmatrix}
    \sqrt{\tfrac12(3-\sqrt5)}\\
    \sqrt{\frac12(1+\sqrt5)}\\
    \sqrt2\\
    1
  \end{bmatrix}\quad\text{and}\quad
  y=\frac1{\sqrt5}
  \begin{bmatrix}
    1\\\sqrt{2}\\
    \sqrt{\frac12(1+\sqrt5)}\\
    \sqrt{\tfrac12(3-\sqrt5)}
  \end{bmatrix},\]
  and that the matrix~$B=
  \begin{smallbmatrix}
    1&1&0&0\\
    0&1&1&0\\
    0&0&1&1\\
    0&0&1&0
  \end{smallbmatrix}\simeq M(\grrx)$ has 
  $\|B^t\schur (xy^*)\|_1>1.235>\|\snakerr\|$, where~$\|\cdot\|_1$ is
  the trace-class norm. It is well-known and easy to see
  that~$S_B\colon \B\to \B$, $A\mapsto B\schur A$ is the dual
  of~$T_{B^t}\colon \C_1\to \C_1$, $C\mapsto B^t\schur C$, the mapping
  of Schur multiplication by~$B^t$ on the trace-class operators~$\C_1$
  (viewed as the predual of~$\B$). Since~$x$ and~$y$ are unit vectors,
  $\|xy^*\|_1=1$ and so
  $\|B\|_{\schur}=\|S_B\|=\|T_{B^t}\|>\|\snakerr\|$.
\end{proof}

\begin{prop}\label{prop:grrr}
  $\|\grrr\|>\|\snakerr\|$.
\end{prop}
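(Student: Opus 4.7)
The plan is to adapt the argument of Proposition~\ref{prop:grrx}, exploiting the duality between Schur multiplication on $\B(\ell^2_4)$ and on the trace-class operators $\C_1$. Setting $B \simeq M(\grrr)$, we have $\|B\|_\schur = \|S_B\| = \|T_{B^t}\|$, where $T_{B^t}\colon \C_1 \to \C_1$ sends $C \mapsto B^t \schur C$. Hence it suffices to exhibit a single trace-class operator $A$ with $\|A\|_1 \leq 1$ such that $\|B^t \schur A\|_1 > \eta_6$.

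First I would try the same rank-one candidate $A = xy^*$ used in Proposition~\ref{prop:grrx}, where $x,y$ are the explicit unit vectors coming from the proof of Theorem~\ref{thm:snakes} in the case $n=4$; recall these have entries involving $\sqrt 5$ and $\sqrt{(1\pm\sqrt 5)/2}$, and $\|xy^*\|_1 = \|x\|\,\|y\| = 1$. Then $M := B^t \schur (xy^*)$ is a $4\times 4$ matrix supported on the seven positions indicated by the transposed edge set of~$\grrr$, with real algebraic entries. The Schur norm bound will follow from computing $\|M\|_1 = \mathrm{tr}(\sqrt{M^*M})$ by diagonalising the real symmetric $4\times 4$ matrix $M^*M$ and summing the square roots of its eigenvalues, verifying that the result exceeds $\eta_6 = \tfrac 25\sqrt{5+2\sqrt 5} \approx 1.23107$.

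The main obstacle is the explicit evaluation of the trace-class norm: the characteristic polynomial of $M^*M$ is quartic with algebraic coefficients and need not factor neatly, so the final strict inequality must be verified with a controlled numerical bound (for example by computer algebra, or by bracketing each singular value rigorously via Sturm sequences or explicit witnesses). If this particular choice of $x,y$ does not yield a sufficient margin over $\eta_6$, a backup strategy is to optimise $A$ within a parametric family, either by tilting $x,y$ slightly or by allowing $A$ to be of rank~$2$; the duality formulation provides ample flexibility, and one expects the excess $\|\grrr\| - \eta_6$ to be of the same modest order of magnitude as the excess obtained in Proposition~\ref{prop:grrx} for $\grrx$.
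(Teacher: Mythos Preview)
Your strategy is sound in principle and takes a genuinely different route from the paper. The paper does \emph{not} use the trace-class duality here: instead it exhibits an explicit orthogonal matrix~$U$ (built from the coisometry~$V$ of Proposition~\ref{prop:smallnorms}(\ref{prop:smallnorms:sqrt32}) by appending a fourth row) and computes $\|B\schur U\|$ directly via the characteristic polynomial of $(B\schur U)(B\schur U)^*-9I$, obtaining a closed-form lower bound $\tfrac14\sqrt{9+\lambda}$ that visibly exceeds~$\eta_6$. This is the same ``primal witness'' pattern used for~$\grrc$ and~$\grrz$; only for~$\grrx$ does the paper switch to the dual trace-norm witness you are adapting.

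Your approach has the merit of reusing the vectors already computed in Theorem~\ref{thm:snakes}, but as you yourself note, it is a plan rather than a proof: you have not verified that the particular~$x,y$ give a trace norm exceeding~$\eta_6$ for this~$B$, and the quartic for the singular values of $B^t\schur(xy^*)$ will be messier than the cubic the paper obtains. The paper's direct approach buys an explicit algebraic witness with a clean calculation; yours buys uniformity with Proposition~\ref{prop:grrx} at the cost of a computation you have deferred. Either can be made to work, but to turn your proposal into a proof you must actually carry out the singular-value estimate (or switch to an explicit orthogonal~$U$ as the paper does).
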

\begin{proof}
  Consider
  \[ B=
  \begin{bmatrix}
    1&1&0&0\\1&0&1&0\\1&0&0&1\\0&0&0&1
  \end{bmatrix}\quad\text{and}\quad
  U=\frac14\begin{bmatrix}
    \sqrt5&3&-1&-1\\\sqrt5&-1&3&-1\\\sqrt5&-1&-1&3\\-1&\sqrt5&\sqrt5&\sqrt5
  \end{bmatrix}.\] The matrix~$U$ is orthogonal, and $M(\grrr)\simeq
  B$.  Now
  \begin{align*}
    16\|B\schur U\|^2&=\left\|
      \begin{bmatrix}
        15&3\sqrt5&3\sqrt5&3\sqrt5\\3\sqrt5&9&0&0\\3\sqrt5&0&9&0\\3\sqrt5&0&0&14
      \end{bmatrix}\right\| = \left\|9I + Z%
    \right\|
  \end{align*}
  where~$Z=\begin{smallbmatrix}
    6&\sqrt5&\sqrt5&\sqrt5\\\sqrt5&0&0&0\\\sqrt5&0&0&0\\\sqrt5&0&0&5
  \end{smallbmatrix}$, which has characteristic
  polynomial $p(x)=x(x^3-11x-105x+450)$. By estimating the roots
  of~$p(x)$, one can show that~$\|B\schur U\|=\frac14\sqrt{9+\lambda}$
  where~$\lambda$ is the largest root of~$p(x)$, and that
  $\|\grrr\|\geq \|B\schur U\|>\|\snakerr\|$.
\end{proof}

\begin{prop}\label{prop:grrz}
  $\|\grrz\|> \|\snakerr\|$.
\end{prop}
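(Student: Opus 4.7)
The strategy follows the pattern of Propositions \ref{prop:grrc}, \ref{prop:grrx} and \ref{prop:grrr}: I would exhibit a concrete contractive test object and show that the corresponding Hadamard-masked norm strictly exceeds $\|\snakerr\|=\tfrac{2}{5}\sqrt{5+2\sqrt5}$. Structurally, $\grrz$ is a tree on eight vertices whose two degree-$3$ vertices $r_{3}$ and $c_{2}$ are adjacent, whereas $\snakerr$ is a simple path with maximum degree~$2$. So $\grrz$ is substantially richer, and the Haagerup-type factorisation should have enough room to detect this extra branching.

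My first attempt would mimic the dual approach used in Proposition \ref{prop:grrx}: take the extremal unit vectors
\[
x=\tfrac{1}{\sqrt5}\bigl[\,\sqrt{\tfrac12(3-\sqrt5)},\,\sqrt{\tfrac12(1+\sqrt5)},\,\sqrt2,\,1\,\bigr]^{T},\quad y=\tfrac{1}{\sqrt5}\bigl[\,1,\,\sqrt2,\,\sqrt{\tfrac12(1+\sqrt5)},\,\sqrt{\tfrac12(3-\sqrt5)}\,\bigr]^{T}
\]
from the proof of Theorem \ref{thm:snakes} with $n=4$, and then compute the trace-class norm $\|M(\grrz)^{t}\schur(xy^{*})\|_{1}$. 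This amounts to summing the singular values of the sparse $4\times 4$ matrix supported on the seven edges of $\grrz$ with entries $\overline{y_{i}}x_{j}$, which can be written in closed form over $\mathbb{Q}(\sqrt5)$. Since $\grrz$ and $\snakerr$ have the same edge count but $\grrz$ redistributes mass toward the branching columns and rows that these particular vectors concentrate on, one expects the resulting trace norm to strictly exceed $\|\snakerr\|$.

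If that estimate falls short, an alternative is a Householder-type orthogonal matrix~$U$ adapted to the two branching vertices of $\grrz$, in the spirit of the matrices appearing in Propositions \ref{prop:grrc} and \ref{prop:grrr}: for instance, place $\sqrt5/4$ entries in the row and column corresponding to $r_{3}$ and $c_{2}$ and use a $\tfrac{1}{4}(3,-1,-1)$ pattern elsewhere. For such a $U$, $M(\grrz)\schur U$ is sparse, so $\|M(\grrz)\schur U\|^{2}$ reduces to the largest eigenvalue of an explicit symmetric positive $4\times 4$ matrix, which can be compared algebraically to $\|\snakerr\|^{2}=\tfrac{4}{25}(5+2\sqrt5)$ by root-location arguments on the characteristic polynomial, exactly as in the proof of Proposition \ref{prop:grrr}.

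The hard part is not conceptual but algebraic: one must locate a test object yielding an algebraic number that admits a clean symbolic comparison with the nested radical $\tfrac{2}{5}\sqrt{5+2\sqrt5}$, and one must ensure that the resulting inequality is \emph{strict}. As in the preceding three propositions, the final verification is a bounded computation in $\mathbb{Q}(\sqrt5)$, feasible by hand or by a short computer-algebra check. Should neither candidate work cleanly, a direct computation of $\|\grrz\|$ using the Pick--Pisier--Shimogaki-type criterion invoked in Proposition \ref{prop:smallnorms}(4) gives a fallback route, since $\grrz$ is a $4\times 4$ idempotent and the associated semidefinite programme can be solved symbolically.
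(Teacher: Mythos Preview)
Your proposal is a menu of plausible strategies, not a proof: you never actually carry out a computation, and each branch ends with ``one expects'' or ``should''. For a result whose entire content is a numerical inequality between two specific algebraic numbers, this is the whole difficulty, not a detail to be deferred. In particular, your first approach (the trace-class test with the $n=4$ extremal vectors) is pure speculation --- you give no reason beyond edge-count heuristics why the redistribution of mass should push the trace norm above $\eta_6$, and for the second approach you only gesture at a Householder pattern without writing one down or checking orthogonality.

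The paper's proof uses an idea you do not mention: $\grrz$ admits a \emph{symmetric} biadjacency representation. Permuting rows and columns, one can take
\[
B=\begin{bmatrix}0&0&0&1\\0&1&1&1\\0&1&0&0\\1&1&0&0\end{bmatrix}=B^{t},
\]
and then choose a \emph{symmetric} orthogonal test matrix $U$. This makes $B\schur U$ symmetric, so $\|B\schur U\|$ is simply its spectral radius, i.e.\ the largest root of its (rational) characteristic polynomial $p(x)=\tfrac{1}{18}(x+1)(18x^3-24x^2-x+4)$. The strict inequality then reduces to checking $p(\eta_6)<0$ together with $p'(x)>0$ for $x>1$. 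Your suggested test matrices are not symmetric, so you would be forced to work with $(B\schur U)(B\schur U)^{*}$ and a degree-$4$ polynomial with messier coefficients; this is what happens in Proposition~\ref{prop:grrr}, but there the graph has no symmetric representation, so it is unavoidable. Here the symmetry halves the algebraic work and is the reason the comparison is clean.
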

\begin{proof}
  Consider the symmetric matrices
  \[ B= \begin{bmatrix}
    0&0&0&1\\0&1&1&1\\0&1&0&0\\1&1&0&0
  \end{bmatrix} \quad\text{and}\quad U=
  \begin{bmatrix}
    0&0&-1/\sqrt2&1/\sqrt2\\
    0&1/3&2/3&2/3\\
    -1/\sqrt2 &2/3&-1/6&-1/6\\
    1/\sqrt2 &2/3&-1/6&-1/6
  \end{bmatrix}.
  \] By direct calculation,~$U$ is orthogonal, and 
  $M(\grrz)\simeq B$. The characteristic polynomial of~$B\schur U$
  is~$p(x)=\frac1{18}(x+1)(18 x^3-24x^2-x+4)$. It is easy to see
  that~$p(x)$ has two negative roots and two positive roots, and the
  smallest root is~$-1$ while the largest root is larger
  than~$1$. Since~$B\schur U$ is symmetric,~$\|B\schur U\|$ is the
  spectral radius of~$p(x)$, which is the largest root
  of~$p(x)$. But~$p(\|\snakerr\|)<0$ and~$p'(x)>0$ for~$x>1$, so
  $\|\grrz\|\geq \|B\schur U\|>\|\snakerr\|$.
\end{proof}

\begin{rk}
  Numerical methods produce the following estimates for these norms,
  each correct to $5$ decimal places: $\|\grrx\|\approx 1.24131$,
  $\|\grrr\|\approx 1.25048$, $\|\grrz\|\approx 1.25655$ and
  $\|\grrc\| \approx 1.25906$.  To see this, we apply the numerical
  algorithm described in~\cite{cowen-et-al} to~$M(G)$ for each of
  these graphs~$G$. The algorithm requires a unitary matrix without
  zero entries as a seed.  Using the $4\times 4$ Hadamard unitary
  $H_4=H_2\otimes H_2$ where
  $H_2=\frac1{\sqrt2}\begin{smallbmatrix}1&1\\1&-1
    \end{smallbmatrix}$, after 20 or fewer
  iterations, in each case the algorithm produces real matrices~$R$
  and~$S$ for which the Haagerup estimate gives an upper
  bound~$\beta=c(S)c(R)$, and an orthogonal matrix~$U$ giving a lower
  bound~$\alpha=\|M(G)\schur U\|$, so that
  $\beta-\alpha<10^{-6}$. 
\end{rk}

\section{A characterisation of the Schur idempotents with small norm}\label{sec:proofmain}

We now use the results of the previous section to characterise the
Schur idempotents with norm~$\eta_k$ for $1\leq k\leq 6$. This will
yield a proof of Theorem~\ref{thm:gaps}. %

\begin{notn}
  We will write \[\Gamma=\bigcup_{1\leq m,n\leq \aleph_0}\Gamma(m,n).\] 
  Note that
  $\N=\{\|G\|\colon G\in \Gamma\}\setminus\{\infty\}$.
\end{notn}

\begin{rk}
  In the arguments below, we frequently encounter the following
  situation: $G$ is a twin-free bipartite graph with an induced
  subgraph~$H$, and~$H$ contains two vertices $v_1$ and~$v_2$ which
  are twins (in~$H$).  Since~$G$ is twin-free, we can conclude that
  there is a vertex~$w$ in~$G$ which is joined to one of~$v_1$
  and~$v_2$ but not the other. We will say that the vertex~$w$
  \emph{distinguishes} the vertices~$v_1$ and $v_2$.
\end{rk}

\def\gwem{\gwe{\nn11\nn12\nn21\nn22\nn23}}
\def\gqrq{\gqr{\nn21\nn22\nn23\nn24}}
\begin{lem}\label{lem:deg3b}
  Let~$G\in\Gamma$ be twin-free.
  \begin{enumerate}
  \item If $\deg(G)\geq 3$, then $G$ contains either $\geew$, $\geee$
    or~$\geer$ as an induced subgraph.
  \item If~$1<\|G\|<\eta_4$, then~$\deg(G)=2$.
  \end{enumerate}
\end{lem}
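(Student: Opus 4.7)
The plan for part~(1) is to take a vertex~$v$ of degree at least~$3$, fix three of its neighbours $a,b,c$, and use twin-freeness of~$G$ to locate further vertices on the same side as~$v$ which, together with $\{v,a,b,c\}$, induce one of $\geew$, $\geee$ or~$\geer$. Since~$G$ is bipartite, I may suppose $v$ is a row vertex and $a,b,c$ are column vertices. For any other row vertex~$r$, set $X_r=N(r)\cap\{a,b,c\}$; then $r$ distinguishes a pair $\{x,y\}\subseteq\{a,b,c\}$ iff $|X_r\cap\{x,y\}|=1$. Every nontrivial value of~$X_r$ (that is, $X_r\neq\emptyset,\{a,b,c\}$) distinguishes exactly two of the three pairs, so distinguishers split into three \emph{types} according to which pair they fail to distinguish.

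The first step is to show that at least two of these types occur. If every distinguisher missed the same pair, say $\{b,c\}$, then for every row the indicators of adjacency to~$b$ and to~$c$ would coincide, forcing $N(b)=N(c)$ and contradicting twin-freeness. So one can pick distinguishers $r_1,r_2$ of distinct types; after relabelling $\{a,b,c\}$ I may assume $r_1$ misses $\{b,c\}$ and $r_2$ misses $\{a,c\}$, leaving four subcases for $(X_{r_1},X_{r_2})$, namely
\[
(\{a\},\{b\}),\quad(\{a\},\{a,c\}),\quad(\{b,c\},\{b\}),\quad(\{b,c\},\{a,c\}).
\]

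The main step is to verify, in each of these four subcases, that the induced subgraph of~$G$ on $\{v,r_1,r_2,a,b,c\}$ is isomorphic to $\geew$, $\geee$, $\geee$ and $\geer$ respectively. Each case is a direct check: list the six edges, read off the row- and column-degree sequences, note in particular that the (unique) maximum-degree row is joined to the (unique) maximum-degree column when the latter exists, and match against the three model graphs. I expect this finite bipartite bookkeeping to be the most laborious part of the proof, but nothing beyond routine comparison is required.

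Part~(2) then follows immediately from Proposition~\ref{prop:basic-graphs}(2) and Proposition~\ref{prop:smallnorms}. Parts~(4),~(7),~(8) of the latter give $\|\geew\|=\eta_4$, $\|\geee\|=\tfrac{1}{15}(9+4\sqrt{6})>\eta_4$ and $\|\geer\|=9/7>\eta_4$, so if $\deg(G)\geq 3$ then part~(1) together with Proposition~\ref{prop:basic-graphs}(2) forces $\|G\|\geq\eta_4$, contradicting $\|G\|<\eta_4$. On the other hand, if $\deg(G)\leq 1$ then $G$ is a disjoint union of isolated edges and isolated vertices, so $\|G\|\in\{0,1\}$; this contradicts $\|G\|>1$. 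Hence $\deg(G)=2$.
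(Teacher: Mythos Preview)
Your proof is correct and follows the same approach as the paper: pick a vertex of degree~$\geq 3$, use twin-freeness to find row vertices distinguishing its three chosen neighbours, and read off the resulting induced subgraph. Your version is simply more detailed---the paper compresses the case analysis into ``it is not hard to see''---and your part~(2) is in fact slightly more complete, since you explicitly dispose of the $\deg(G)\leq 1$ case.
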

\begin{proof}
  (1) Let~$v$ be a vertex in $G$ of degree at least~$3$ and consider
  an induced subgraph $\kqe$ with~$v$ at the top. Since~$G$ is
  twin-free, it is not hard to see that there are at least two other
  row vertices in~$G$ which distinguish the neighbours of~$v$, and
  that this necessarily yields one of the induced subgraphs in the statement.

  (2) follows from~(1), since $\geew$, $\geee$ and~$\geer$ all have
  norm at least~$\eta_4$.
\end{proof}

\begin{lem}\label{lem:deg2}
  If~$G\in \Gamma$ is connected with~$\deg(G)=2$ and $\|G\|<4/\pi$, then
  $\|G\|=\|\Sigma(n,n)\|$ for some unique $n\ge2$. Moreover, \[E\leq
  G\leq F\] where $E=\Sigma(n,n)$ and
  \[F=
  \begin{cases}
    \Sigma(n,n+1)&\text{if~$n$ is even,}    \\
    \Lambda(n+1)&\text{if~$n$ is odd.}
  \end{cases}\]
\end{lem}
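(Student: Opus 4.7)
The plan is to classify $G$ combinatorially and then read off its Schur norm using Theorem~\ref{thm:snakes} and equation~\eqref{eq:davdon-loops}. Since $G$ is connected and every vertex has degree at most $2$, elementary graph theory shows that $G$ is one of: a finite path, a finite cycle, a one-way infinite ray, or a two-way infinite path. Bipartiteness forces any cycle to have even length and hence to equal $\Lambda(k)$ for some $k\ge2$, while a finite bipartite path has parts of sizes $(m,m)$ or $(m,m{+}1)$, so it equals $\Sigma(m,m)$ or $\Sigma(m,m{+}1)$.

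Two of these possibilities are ruled out by $\|G\|<4/\pi$. If $G$ is an infinite path, then $G\ge\Sigma(m,m)$ for every $m$, and Proposition~\ref{prop:basic-graphs}(2) together with Theorem~\ref{thm:snakes} gives
\[
\|G\|\ge\sup_{m}\tfrac{2}{m+1}\cot\theta_{m+1}=\tfrac{4}{\pi},
\]
a contradiction. If $G=\Lambda(k)$ with $k$ odd, then \eqref{eq:davdon-loops} combined with the estimate $\sin u<u$ gives $\|G\|=\tfrac{2}{k\sin\theta_k}>\tfrac{2}{k\theta_k}=\tfrac{4}{\pi}$, again a contradiction. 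Hence $G$ equals $\Sigma(n,n)$, $\Sigma(n,n{+}1)$, or $\Lambda(n{+}1)$ with $n$ odd, for some $n$, and in every case Theorem~\ref{thm:snakes} and~\eqref{eq:davdon-loops} give $\|G\|=\tfrac{2}{n+1}\cot\theta_{n+1}=\|\Sigma(n,n)\|$.

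Uniqueness of $n$ follows from the strict monotonicity of the sequence $f(n)=\tfrac{2}{n+1}\cot\theta_{n+1}$: setting $y=1/(n+1)$ and $g(y)=2y\cot(\pi y/2)$, a short computation yields $g'(y)=(\sin\pi y-\pi y)/\sin^2(\pi y/2)<0$ on $(0,1/2]$, so $f$ is strictly increasing. The final inclusions $E\leq G\leq F$ are then verified by inspection of the three cases, using that $\Sigma(n,n)$ is an induced subgraph of $\Sigma(n,n{+}1)$ (remove an endpoint) and of $\Lambda(n{+}1)$ (remove two adjacent vertices), and $\Sigma(n,n{+}1)$ is an induced subgraph of $\Lambda(n{+}1)$ (remove one vertex). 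The main obstacle is the initial classification, but this is standard; everything else amounts to bookkeeping on formulas already established.
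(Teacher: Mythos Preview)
Your proof is correct and follows the same route as the paper: classify a connected graph of maximum degree~$2$ as a path or a cycle, then invoke Theorem~\ref{thm:snakes} and~\eqref{eq:davdon-loops} together with the monotonicity of $\tfrac{2}{n+1}\cot\theta_{n+1}$ and the inequality $\tfrac{2}{n}\csc\theta_n>4/\pi$. The paper's proof is a two-sentence sketch of exactly this reasoning; you simply supply the details (explicitly handling the infinite-path case and writing out the derivative computation for monotonicity).
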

\begin{proof}
  The graph~$G$ is connected and $\deg(G)=2$, so~$G$ is either a path
  or a cycle. Since the sequence~$\frac2{n}\cot\theta_n$ is strictly
  increasing with limit~$4/\pi$ and~$\frac2n\csc\theta_n>4/\pi$ for
  every~$n$, the claim follows from equation~\eqref{eq:davdon-loops}
  and Theorem~\ref{thm:snakes}.
\end{proof}

\begin{lem}\label{lem:deg3}
  If~$G\in\Gamma$ is twin-free with $\|G\|< \|\trie\|$, then
  \begin{enumerate}
  \item  $G\not \geq\gwem$ and
  \item $\deg(G)\leq 3$.
  \end{enumerate}
\end{lem}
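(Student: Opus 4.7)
The plan is to use the norm estimates in Proposition~\ref{prop:smallnorms} together with Proposition~\ref{prop:grrc} to identify forbidden induced subgraphs whenever $\|G\| < \|\trie\|$, then extract the two structural conclusions.

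For part~(1), I would argue by contradiction: assume $G$ contains an induced copy of $\gwem$, with the degree-$2$ and degree-$3$ vertices (call them $v_1, v_2$) on one side and the three opposite vertices (call them $c_1, c_2, c_3$) on the other, so that $v_1$ is adjacent to $\{c_1, c_2\}$ and $v_2$ to $\{c_1, c_2, c_3\}$. Then $c_1$ and $c_2$ are twins in this induced subgraph, but $G$ is twin-free, so there is a vertex $v_3$ of $G$ adjacent to exactly one of them; say $v_3 \sim c_1$ and $v_3 \not\sim c_2$. Now consider $H = G[\{v_1, v_2, v_3\}, \{c_1, c_2, c_3\}]$ and split according to whether $v_3 \sim c_3$. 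A short check of the two resulting $3 \times 3$ adjacency matrices (up to row/column permutation) shows that $H \cong \trie$ when $v_3 \not\sim c_3$ and $H \cong \geer$ when $v_3 \sim c_3$. Since Proposition~\ref{prop:smallnorms} gives $\|\trie\| < \|\geer\|$, both cases force $\|G\| \geq \|H\| \geq \|\trie\|$, contradicting the hypothesis.

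For part~(2), suppose some vertex of $G$ has degree at least~$4$; by Proposition~\ref{prop:basic-graphs}(1) I may assume this is a row vertex $v$ with distinct column neighbors $w_1, \dots, w_4$. The first key step is to use part~(1) together with twin-freeness to deduce that \emph{any two distinct row vertices of $G$ share at most one common column neighbor}: if $r, r'$ are distinct rows with $|N(r) \cap N(r')| \geq 2$, then either $N(r) = N(r')$ (making $r, r'$ twins) or the symmetric difference contains a column $c$ which, together with two common neighbors, produces an induced $\gwem$ on $\{r, r'\}$ and three columns. Setting $R_i = N(w_i) \setminus \{v\}$, this observation forces the sets $R_1, R_2, R_3, R_4$ to be pairwise disjoint, while twin-freeness among the $w_i$ forbids more than one $R_i$ from being empty. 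After relabelling, pick $v_i \in R_i$ for $i = 1, 2, 3$; I would then verify that the induced subgraph $G[\{v, v_1, v_2, v_3\}, \{w_1, w_2, w_3, w_4\}]$ has, up to row/column permutation, precisely the adjacency matrix appearing in the definition of $\grrc$, whence $\|G\| \geq \|\grrc\| > \|\trie\|$ by Proposition~\ref{prop:grrc}, a contradiction.

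The main obstacle is purely combinatorial bookkeeping: in part~(1) one must confirm that the two candidate induced $3 \times 3$ subgraphs really are (isomorphic to) $\trie$ and $\geer$ and not some other labelled graph, and in part~(2) one must carefully apply the ``no two same-side vertices share two common neighbors'' principle to force the $R_i$ to be disjoint and then recognise $\grrc$ in the resulting $4 \times 4$ block. No new analytic or spectral input is needed beyond the norm computations already performed in Section~\ref{sec:calcs}.
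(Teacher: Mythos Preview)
Your proposal is correct and follows essentially the same route as the paper. Part~(1) is identical to the paper's argument; in part~(2) you abstract the step ``a distinguishing row vertex can be attached to at most one of the $w_i$'' into the cleaner principle that any two distinct row vertices share at most one common column neighbour, whereas the paper applies this observation iteratively as it adds distinguishing vertices one at a time, but the underlying idea and the endpoint ($G\geq\grrc$, then invoke Proposition~\ref{prop:grrc}) are the same.
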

\begin{proof}
  (1) Otherwise, since~$G$ is twin-free, there is a row
  vertex~$r$ in~$G$ which distinguishes the twin column
  vertices %
  in $\gwem$. %
  Hence either
  $G\geq\trie$ or $G\geq \geer$, and so $\|G\|\geq \|\trie\|$ by
  Proposition~\ref{prop:smallnorms}.
  
  (2) Suppose that $\deg(G)>3$, so that $G\geq \gqrq$. In order to
  distinguish between the four twin column vertices, there must
  be another row vertex in~$G$ attached to one but not all of these,
  so $G\supseteq\gwr{\nn11\nn21\nn22\nn23\nn24}$. In fact, to avoid
  the induced subgraph forbidden by~(1), we must have
  $G\geq\gwr{\nn11\nn21\nn22\nn23\nn24}$. Distinguishing between the
  remaining columns using the same argument shows that $G\geq\grrc$,
  so $\|G\|\geq \|\grrc\|>\|\trie\|$ by Proposition~\ref{prop:grrc},
  contrary to hypothesis.
\end{proof}

\begin{notn}
  We define graphs~$E_j\leq F_j$ for~$1\leq j\leq 6$ by:
  \begin{equation*}
    E_1=F_1=\gqqq,\qquad E_2=\gwwq,\ F_2=\gweq,\qquad E_3=\snakeee,\ F_3=\looprr
  \end{equation*}
  \begin{equation*}
    E_4=\geew,\ \!F_4=\getq,\quad\! E_5=F_5=\grew,\quad\! E_6=\snakerr,\ \!F_6=\snakert
  \end{equation*}
  Note that $\|E_j\|=\|F_j\|=\eta_j$ for $1\leq j\leq 6$.
\end{notn}

\begin{thm}\label{thm:characterisations}
  Let $G\in \Gamma$ be a twin-free, connected bipartite graph. For
  each~$k\in \{1,2,3,4,5,6\}$, the following are equivalent:
  \begin{enumerate}
  \item $E_k\leq G\leq F_k$;
  \item $\|G\|=\eta_k$;
  \item $\eta_{k-1}<\|G\|\leq \eta_{k}$.
  \end{enumerate}
\end{thm}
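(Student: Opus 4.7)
The plan is to prove the three equivalences cyclically: $(1) \implies (2) \implies (3) \implies (1)$. The first implication follows from Proposition~\ref{prop:basic-graphs}(2), which immediately gives
\[
\eta_k = \|E_k\| \leq \|G\| \leq \|F_k\| = \eta_k,
\]
and $(2) \implies (3)$ is trivial. All the substance lies in $(3) \implies (1)$, which I would handle by case analysis on~$k$.

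For $k \in \{1,2,3\}$, the hypothesis gives $\|G\| \leq \eta_3 < \eta_4$, so Lemma~\ref{lem:deg3b}(2) forces $\deg(G) \leq 2$. Being connected and twin-free, $G$ is then a simple path or a cycle, and the formulae of Theorem~\ref{thm:snakes} and~\eqref{eq:davdon-loops} (together with the strict monotonicity of $\tfrac{2}{n+1}\cot\theta_{n+1}$) pin down which path or cycle can realise $\|G\| = \eta_k$: the single edge for $k=1$; $\Sigma(2,2)$ or $\Sigma(2,3)$ for $k=2$; and $\Sigma(3,3)$, $\Sigma(3,4)$, or $\Lambda(4)$ for $k=3$. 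In each case, the resulting graph lies in $[E_k, F_k]$.

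For $k \in \{4,5,6\}$, the hypothesis gives $\eta_3 < \|G\| \leq \eta_6 < \|\trie\|$. I would first handle the sub-case $\deg(G) \leq 2$: $G$ is a path or cycle, and the only ones with norm in $(\eta_3, \eta_6]$ are $\Sigma(4,4)$ and $\Sigma(4,5)$ (the cycle $\Lambda(5)$ and all longer paths already exceed $\eta_6$), both having norm $\eta_6$ and lying in $[E_6, F_6]$. Otherwise $\deg(G) \geq 3$, and Lemma~\ref{lem:deg3b}(1) produces an induced copy of $\geew$, $\geee$, or $\geer$ in $G$; since $\|\geee\|$ and $\|\geer\|$ both exceed $\eta_6$ by Proposition~\ref{prop:smallnorms}(7) and~(8), we must have $G \geq \geew = E_4$. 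Lemma~\ref{lem:deg3} further forces $\deg(G) \leq 3$ and forbids $\gwem$ as an induced subgraph, and $\|G\| \leq \eta_6$ combined with Propositions~\ref{prop:grrc}--\ref{prop:grrz} forbids $\grrc$, $\grrx$, $\grrr$, and $\grrz$.

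The remaining task---which is the main obstacle---is to classify all twin-free, connected $G$ that contain $\geew$ and respect all these prohibitions. I would argue by considering how the vertices of $\geew$ may be extended: each new vertex either enlarges the graph respecting the degree bound, or duplicates the neighbourhood of an existing vertex and so (since $G$ is twin-free) demands a further distinguishing vertex. At every step, consistency with the forbidden list $\{\gwem,\geee,\geer,\grrc,\grrx,\grrr,\grrz\}$ collapses most branches, leaving a short, finite tree of cases whose endpoints are precisely $\geew,\, \getq,\, \grew,\, \snakerr,\, \snakert$ and their twin-free induced subgraphs containing $\geew$. Matching the hypothesis $\|G\| = \eta_k$ against the norm values of Proposition~\ref{prop:smallnorms} then places $G$ into the correct interval $[E_k, F_k]$. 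The delicate point is that the three norm thresholds $\eta_4 < \eta_5 < \eta_6$ are numerically close, so one must be careful to distinguish them cleanly through the structural features of $G$ rather than by numerical estimate.
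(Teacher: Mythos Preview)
Your proposal is correct and follows essentially the same route as the paper: the cyclic implications, the reduction to paths and cycles via Lemma~\ref{lem:deg3b} for small~$k$, and for $k\in\{4,5,6\}$ the split into the $\deg(G)\le2$ sub-case (yielding $\Sigma(4,4)$ or $\Sigma(4,5)$) versus the extension analysis starting from $E_4=\geew$ under a forbidden-subgraph list. One omission worth flagging: your forbidden list must also include $\Lambda(3)=\geet$ (norm $4/3>\eta_6$), which the paper invokes repeatedly to collapse branches when extending~$E_4$; without it the case tree does not close.
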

\begin{proof}
  For each~$k$, the implication $(1)\implies(2)$ follows from
  Propositions~\ref{prop:basic-graphs} and~\ref{prop:smallnorms}, and
  $(2)\implies(3)$ is trivial. 

  Suppose that~$G$ satisfies~(3).

  If~$k=1$, then $0<\|G\|\leq 1$, so $\|G\|=1$ and~$G$
  is a disjoint union of complete bipartite graphs
  by~\cite[Theorem 4]{kat-paulsen}. Since~$G$ is connected and
  twin-free, $G=\gqqq$.

  If~$k\in \{2,3\}$, then $\deg(G)=2$ by Lemma~\ref{lem:deg3b}, so
  $E_k\leq G\leq F_k$ by Lemma~\ref{lem:deg2}.

  If~$k\in \{4,5,6\}$ but $E_6\ne G\ne F_6$, then $\deg(G)\ne2$ by
  Lemma~\ref{lem:deg2} and $\deg(G)\leq 3$ by Lemma~\ref{lem:deg3}, so
  $\deg(G)=3$. Since $\|G\|<\|\trie\|<\|\geer\|$, we have
  $E_4=\geew\leq G$ by Lemma~\ref{lem:deg3b}.

  If~$G$ has the same row vertices as~$E_4$, then any column
  vertex~$c$ in~$G$ which is not in~$E_4$ must be joined to~$E_4$ so
  as to avoid the induced subgraph~$\loopee$, and $c$ cannot be joined
  to the degree~$3$ vertex of~$E_4$ since~$\deg(G)=3$. Hence~$c$ must
  be joined to precisely one of the row~vertices of~$E_4$ with degree
  one. Since~$G$ is twin-free, this gives~$G\leq \getq=F_4$, so
  $E_4\leq G\leq F_4$.

  If on the other hand~$G$ has at least four row vertices, choose a
  row vertex of~$G$ of smallest possible distance~$\delta\in \{1,2\}$
  to the induced subgraph $E_4\leq G$. 
  If $\delta=2$, then $\grrx\subseteq G$, and the rightmost row vertex $r_4$
  of~$\grrx$ is not connected to any of $c_1,c_2,c_3$ in~$G$. Since
  $\grrx$ is not an induced subgraph of~$G$ by
  Proposition~\ref{prop:grrx} and $\deg(G)=3$, we have $G\geq
  \grr{\vrrx\nn14}$; but removing the two degree~$1$ vertices then
  shows that~$G$ contains the forbidden induced subgraph~$\loopee$, a
  contradiction.

  So~$\delta=1$. We claim that~$G=E_5$. Indeed, since~$\delta=1$ we
  know that one of the following is an induced subgraph of~$G$: \[
  G_1=\gre{\veew\nn43} 
  \quad\! G_2=E_5=\grew %
  \quad G_3=\gre{\nn11\nn21\nn22\nn23\nn33\nn43\nn42} 
  \quad\! G_4=\gre{\veew\nn43\nn41} 
  \quad G_5=\gre{\veew\nn43\nn42\nn41}
  \]
  Observe that $\trie$ is an induced subgraph of both $G_3$ and $G_4$,
  so the norms of these are too large. We can also rule out~$G_5$
  since it has a pair of twin row vertices of degree~$3$, so these
  cannot be distinguished in~$G$. If~$G_1\leq G$, then since the
  vertices~$r_3$ and $r_4$ are twins in~$G_1$ but not in~$G$, there is
  a column vertex $c_4$ attached to~$r_4$ (say) but not $r_3$. We
  cannot join $c_4$ to the maximal degree vertex~$r_2$, so we find
  that either
  \[ 
  \grr{\veew\nn43\nn44}
  \quad \text{or}\quad
  \grr{\veew\nn43\nn44\nn14}
  \]
  is an induced subgraph of~$G$ containing~$G_1$. However, the first
  is ruled out by Proposition~\ref{prop:grrz} and the second contains
  an induced subgraph $\loopee$, so cannot occur either. So~$E_5\leq
  G$. If~$E_5$ is a proper induced subgraph of~$G$, then since we must
  avoid~$\loopee$ and also the induced subgraph~$\grrr$ by
  Proposition~\ref{prop:grrr}, it follows that no column vertex of~$G$
  has distance~$1$ to~$E_5$. So there is a row vertex of~$G$ with
  distance~$1$ to~$E_5$. Avoiding $\trie$ and twin vertices of
  degree~$3$, we find an induced subgraph
  $\gte{\nn12\nn22\nn32\nn33\nn43\nn34\nn54}\leq G$. To distinguish
  between the first two row vertices, we add a column vertex while
  avoiding~$\loopee$, and conclude
  that~$\gtr{\nn11\nn12\nn22\nn32\nn33\nn43\nn34\nn54}\leq
  G$. Removing one row vertex gives $\grrz\leq G$, contradicting
  Proposition~\ref{prop:grrz}.

  In summary: if~$k=4$, then $E_4\leq G\leq F_4$; if~$k=5$, then
  $G=E_5$; and if~$k=6$ then~$E_6\leq G\leq F_6$.
\end{proof}

Theorem~\ref{thm:gaps} is an immediate consequence of
Theorem~\ref{thm:characterisations} and
Proposition~\ref{prop:basic-graphs}. We also obtain: %

\begin{cor}\label{cor:characterisations}
  Let~$k\in \{1,2,3,4,5,6\}$.
  \begin{enumerate}
  \item If~$G\in \Gamma$ is twin-free and connected, then
    \[ \|G\|\leq \eta_k\iff G\leq F_j\text{ for some~$j\leq k$}.\]
  \item If~$G\in\Gamma$, then $\|G\|=\eta_k$ if and only if:
    \begin{enumerate}
    \item each component~$H$ of~$G$ satisfies~$\df(H)\leq F_j$ for
    some~$j\leq k$; and
  \item there is a component~$H$ of~$G$ with~$E_k\leq \df(H)$.
    \end{enumerate}
  \end{enumerate}
\end{cor}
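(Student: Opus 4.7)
The plan is to reduce both parts to Theorem~\ref{thm:characterisations} by peeling off the inessential features of~$G$, namely its connected components and its duplicate vertices.

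For part~(1), the backward implication is immediate from Proposition~\ref{prop:basic-graphs}(2): if $G\leq F_j$ with $j\leq k$, then $\|G\|\leq\|F_j\|=\eta_j\leq\eta_k$. For the forward direction, I suppose $G$ is twin-free and connected with $\|G\|\leq\eta_k$. If $\|G\|=0$, then $G$ has no edges, so connectedness forces $G$ to consist of a single vertex, which is an induced subgraph of $F_1=\gqqq$. Otherwise, let $j\in\{1,\dots,k\}$ be minimal with $\|G\|\leq\eta_j$; then $\eta_{j-1}<\|G\|\leq\eta_j$, and Theorem~\ref{thm:characterisations} delivers $E_j\leq G\leq F_j$.

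For part~(2), I use Proposition~\ref{prop:basic-graphs}(3),(4) to write $\|G\|=\sup_H\|\tf(H)\|$ over the components $H$ of $G$, noting that each $\tf(H)$ is twin-free and, by Lemma~\ref{lem:df-conn}, connected. The backward direction is then straightforward: condition~(a) combined with part~(1) gives $\|\tf(H)\|\leq\eta_k$ for every~$H$, hence $\|G\|\leq\eta_k$; and condition~(b) supplies a component $H_0$ with $\|\tf(H_0)\|\geq\|E_k\|=\eta_k$, so $\|G\|=\eta_k$.

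For the forward direction, I assume $\|G\|=\eta_k$. Applying part~(1) to each $\tf(H)$ yields~(a) at once. The delicate point, and the place where Theorem~\ref{thm:gaps} is used crucially, is~(b): I must show the supremum is attained. If no component achieved the value $\eta_k$, then every $\|\tf(H)\|$ would lie in $\N\cap[0,\eta_k)$, which by Theorem~\ref{thm:gaps} is contained in the finite set $\{\eta_0,\dots,\eta_{k-1}\}$; this set has supremum $\eta_{k-1}<\eta_k$, contradicting $\|G\|=\sup_H\|\tf(H)\|=\eta_k$. Hence some component $H_0$ satisfies $\|\tf(H_0)\|=\eta_k$, and applying Theorem~\ref{thm:characterisations} to the connected twin-free graph $\tf(H_0)$ yields $E_k\leq\tf(H_0)$, completing the proof.
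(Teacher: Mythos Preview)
Your proof is correct and follows exactly the approach the paper has in mind; the paper states this corollary without proof, treating it as an immediate consequence of Theorem~\ref{thm:characterisations}, Proposition~\ref{prop:basic-graphs} and Theorem~\ref{thm:gaps}, and your argument spells out precisely that deduction.
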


\section{Normal masa bimodule projections}\label{sec:cbn}

\newcommand{\cbn}{\mathit{NCB}_\D(\B(\H))}

Let~$\H$ be a separable Hilbert space. Given a masa (maximal
abelian selfadjoint subalgebra)~$\D\subseteq \B(\H)$, we write~$\cbn$
for the set of normal completely bounded linear maps~$\B(\H)\to\B(\H)$
which are bimodular over~$\D$.  Smith's theorem~\cite{smith91} ensures
that $\|\Phi\|=\|\Phi\|_{cb}$ for any~$\Phi\in \cbn$. Moreover,
by~\cite[Theorem~2.3.7]{SS-masas}, there is a standard finite measure
space~$(X,\mu)$ so that $\D$ is unitarily equivalent
to~$L^\infty(X,\mu)$ acting by multiplication on~$L^2(X,\mu)$. Hence
we will take $\D=L^\infty(X,\mu)$ and~$\H=L^2(X,\mu)$ without loss of
generality.

Recall that a set~$R\subseteq X\times X$ is marginally null
if~$R\subseteq (N\times X)\cup (X\times N)$ for some null
set~$N\subseteq X$. Two Borel functions~$\phi,\psi\colon X\times X\to
\bC$ are equal marginally almost everywhere (m.a.e.) if
$\{(x,y)\in X\times X\colon \phi(x,y)\ne \psi(x,y)\}$ is marginally
null. We write $[\phi]$ for the equivalence class of all Borel
functions which are equal m.a.e.\ to~$\phi$. Let $L^\infty(X,\ell^2)$
denote the Banach space of essentially bounded measurable functions $X\to
\ell^2$, identified modulo equality almost everywhere. For $f,g\in
L^\infty(X,\ell^2)$, we write $\langle f,g\rangle\colon X\times X\to \bC$ for
the function given m.a.e.\ by $\langle f,g\rangle(s,t)=\langle
f(s),g(t)\rangle$. As shown in~\cite{kat-paulsen}, there is a
bijection
\[ \Gamma\colon \cbn \to \{ [\langle f,g\rangle]\colon f,g\in
L^\infty(X,\ell^2)\}\] so that for every~$\phi\in \Gamma(\Phi)$, the
map~$\Phi$ is the normal extension to~$\B(\H)$ of pointwise
multiplication by~$\phi$ acting on the (integral kernels of)
Hilbert-Schmidt operators in~$\B(\H)$. Moreover, $\Gamma$ is a
homomorphism with respect to composition of maps and pointwise
multiplication, and
\[ \|\Phi\|=\inf\{ \|f\|\,\|g\|\colon f,g\in L^\infty(X,\ell^2),\
\Gamma(\Phi)=[\langle f,g\rangle]\}\] and this infimum is attained. In
the discrete case, this reduces to~\cite[Corollary~8.8]{paulsen-book}.

\begin{lem}\label{lem:direct-sums-cts}
  Let~$\Phi\in \cbn$. If~$\Gamma(\Phi)=[\phi]$ and~$\{R_j\}_{j\ge1}$,
  $\{C_j\}_{j\ge1}$ are two countable Borel partitions of~$X$ with
  $\phi^{-1}(\bC\setminus\{0\})\subseteq \bigcup_{j\ge1} R_j\times
  C_j$, then $\|\Phi\|=\sup_j\|\Phi_j\|$
  where~$\Gamma(\Phi_j)=[\chi_{R_j\times C_j}\cdot\phi]$.
\end{lem}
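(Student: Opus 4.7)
The plan is to use the support condition on $\phi$ to show that every $\Phi(X)$ is ``block-diagonal'' with respect to the orthogonal decompositions of $\H$ induced by $\{R_j\}$ and $\{C_j\}$, with $j$-th diagonal block equal to $\Phi_j(X)$. The norm identity then follows from the standard fact that a block-diagonal operator has norm equal to the supremum of the norms of its blocks.

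Let $P_j=M_{\chi_{R_j}}$ and $Q_j=M_{\chi_{C_j}}$ be the projections in $\D$ associated to the two partitions; these are mutually orthogonal and sum strongly to $I$. Since $\Phi$ is a $\D$-bimodule map, for any indices $j,k$ and any $X\in\B(\H)$,
\[ P_k\Phi(X)Q_j=\Phi(P_kXQ_j),\]
and this map, viewed as an element of $\cbn$, has symbol $\chi_{R_k\times C_j}\cdot\phi$ by the multiplicativity of $\Gamma$. When $k=j$ this is precisely the symbol of $\Phi_j$, so $P_j\Phi(X)Q_j=\Phi_j(X)$. When $j\ne k$, the rectangle $R_k\times C_j$ is literally disjoint from $\bigcup_l R_l\times C_l$, so the hypothesis forces $\chi_{R_k\times C_j}\cdot\phi$ to be marginally null; injectivity of $\Gamma$ on m.a.e.\ equivalence classes then yields $P_k\Phi(X)Q_j=0$ for every $X\in\B(\H)$.

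For arbitrary $\xi,\eta\in\H$ and $X\in\B(\H)$, expanding $\xi=\sum_j Q_j\xi$ and $\eta=\sum_k P_k\eta$ and invoking the off-diagonal vanishing gives
\[
\langle\Phi(X)\xi,\eta\rangle=\sum_j\langle\Phi_j(X)Q_j\xi,P_j\eta\rangle.
\]
Two applications of the Cauchy--Schwarz inequality, combined with $\sum_j\|Q_j\xi\|^2=\|\xi\|^2$ and its analogue for $P_j$, yield
\[ |\langle\Phi(X)\xi,\eta\rangle|\leq\bigl(\sup_j\|\Phi_j\|\bigr)\|X\|\,\|\xi\|\,\|\eta\|,\]
whence $\|\Phi\|\leq\sup_j\|\Phi_j\|$. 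The reverse inequality is immediate: $\Phi_j(X)=\Phi(P_jXQ_j)$ forces $\|\Phi_j(X)\|\leq\|\Phi\|\|X\|$, so $\|\Phi_j\|\leq\|\Phi\|$ for every $j$.

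The main technical point is the off-diagonal vanishing $P_k\Phi(X)Q_j=0$ for \emph{all} $X\in\B(\H)$, not merely for Hilbert--Schmidt $X$ (where the conclusion is obvious from inspection of integral kernels). This is where normality of $\Phi$ plays its role: it is what permits the Katavolos--Paulsen identification of a normal bimodule map with its symbol, together with the multiplicativity of $\Gamma$, to extend from the Hilbert--Schmidt ideal to all of $\B(\H)$. Once that is in hand, the rest is the direct continuous analogue of the discrete direct-sum identity in Proposition~\ref{prop:basic-matrices}(4).
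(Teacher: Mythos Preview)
Your proof is correct and follows essentially the same approach as the paper: both arguments identify $\Phi_j(X)=P_j\Phi(X)Q_j$ via the multiplicativity of $\Gamma$, use the support hypothesis to kill the off-diagonal blocks, and then read off the norm from the resulting block-diagonal structure. The paper packages this slightly more compactly by writing $\Phi=\Psi\circ\Phi$ with $\Psi(T)=\sum_j P_jTQ_j$ (citing Katavolos--Paulsen for $\Gamma(\Psi)=[\chi_K]$) and then using $\|\Psi(\Phi(T))\|=\sup_j\|P_j\Phi(T)Q_j\|$ directly, whereas you unpack the block-diagonal norm estimate via Cauchy--Schwarz; but the substance is the same.
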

\begin{proof}
  Let~$P_j=\chi_{R_j}$ and~$Q_j=\chi_{C_j}$. Note that~$\{P_j\}$
  and~$\{Q_j\}$ are then partitions of the
  identity in~$\D$. By~\cite[Theorem~10]{kat-paulsen}, the map~$\Psi$ given
  by~$\Psi(T)=\sum_{j\ge1} P_j TQ_j$ is in~$\cbn$, and
  \[\Gamma(\Psi)=[\chi_{K}]\quad\text{where}\quad K=\bigcup_{j\ge1}R_n\times
  C_n.\] Since~$\Gamma$ is a homomorphism and $\phi=\chi_K\cdot\phi$, we have
  \[ \Gamma(\Phi)=\Gamma(\Psi)\cdot \Gamma(\Phi)=\Gamma(\Psi\circ \Phi),\] hence
  $\Phi=\Psi\circ \Phi$. Let~$\Psi_j\in\cbn$ be given
  by~$\Psi_j(T)=P_j TQ_j$, and let~$\Phi_j=\Psi_j\circ
  \Phi$. Since~$\Gamma$ is a homomorphism,
  $\Gamma(\Phi_j)=\Gamma(\Psi_j\circ \Phi)=[\chi_{R_j\times C_j}\cdot
  \phi]$, and for any~$T\in \B(\H)$,
  \[ \|\Phi(T)\|=\|\Psi\circ \Phi(T)\|=\sup_{j\ge1} \|P_j\Phi(T)Q_j\|=\sup_{j\ge1} \|\Phi_j(T)\|.
  \qedhere
  \]
\end{proof}

\begin{prop}\label{prop:biggraph}
  Let~$\Phi\in\cbn$ be idempotent and let $\eta>\|\Phi\|$.
  \begin{enumerate}
  \item There exist a Borel set~$G\subseteq X\times X$ and weakly
    Borel measurable functions $f,g\colon X\to \ell^2$ so that
    \begin{enumerate}
    \item $\Gamma(\Phi)=[\chi_G]$;
    \item $\chi_G(x,y)=\langle f(x),g(y)\rangle$ for all $x,y\in
      X$; and
    \item $\sup_{x,y\in X} \|f(x)\|\,\|g(y)\|<\eta$.
    \end{enumerate}
  \item For such a set~$G$, there are two countable families of disjoint
    Borel subsets of~$X$, say~$\{R_j\}$ and~$\{C_j\}$, so that the
    components of~$G$ are the Borel sets $G_j=G[R_j,C_j]$, and there are
    maps~$\Phi_j\in \cbn$ with $\Gamma(\Phi_j)=[\chi_{G_j}]$ and
    $\|\Phi\|=\sup_j \|\Phi_j\|$.
  \item If~$F$ is a countable induced subgraph of~$G$,
    then~$\|F\|<\eta$.
  \item If~$\df(G)$ is countable, then~$\|\Phi\|\leq \|\df(G)\|$.
  \end{enumerate}
\end{prop}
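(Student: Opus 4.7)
The plan is to use the $\Gamma$-parametrisation recalled just before the statement as the common tool, handling Parts~(1), (3) and~(4) fairly directly and saving more effort for Part~(2).

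For Part~(1), I would apply the (attained) infimum formula for $\|\Phi\|$ to pick weakly Borel $f_0,g_0\in L^\infty(X,\ell^2)$ with $\Gamma(\Phi)=[\langle f_0,g_0\rangle]$ and $\|f_0\|\,\|g_0\|<\eta$. Idempotency of $\Phi$ and the fact that $\Gamma$ is a homomorphism force $\langle f_0,g_0\rangle^2=\langle f_0,g_0\rangle$ marginally a.e., so this function takes values in $\{0,1\}$ off a marginally null set; the essential sup bounds $\|f_0(x)\|\leq\|f_0\|$ and $\|g_0(y)\|\leq\|g_0\|$ hold off a null set. Combining these exceptional sets into a single null set $N\subseteq X$ and setting $f:=\chi_{X\setminus N}\cdot f_0$, $g:=\chi_{X\setminus N}\cdot g_0$ gives weakly Borel pointwise-defined functions for which $\langle f(x),g(y)\rangle\in\{0,1\}$ and $\|f(x)\|\,\|g(y)\|<\eta$ for every $x,y\in X$. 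Then the Borel set $G:=\{(x,y):\langle f(x),g(y)\rangle=1\}$ satisfies~(a)--(c).

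For Part~(3), a countable induced subgraph $F=G[X_0,Y_0]$ has biadjacency matrix $[\langle f(x),g(y)\rangle]_{x\in X_0,\,y\in Y_0}$; the discrete Haagerup factorisation $M(F)=S^*R$ with columns $\{\overline{f(x)}\}_{x\in X_0}$, $\{\overline{g(y)}\}_{y\in Y_0}$ in $\ell^2$ gives $\|F\|\leq c(S)c(R)=\sup_x\|f(x)\|\,\sup_y\|g(y)\|<\eta$. For Part~(4), assume $F:=\df(G)$ is countable, fix $\epsilon>0$, and pick a factorisation $M(F)=S^*R$ with $c(S)c(R)<\|F\|+\epsilon$ together with countable sets $X_0,Y_0\subseteq X$ of row- and column-twin representatives. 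Because every column vertex of $G$ is column-twin to some $y_j\in Y_0$, the condition ``$x$ is row-twin to $x_i$'' reduces to the countable Borel condition $\chi_G(x,y_j)=\chi_G(x_i,y_j)$ for all $y_j\in Y_0$; so the twin classes $\{R_i\}$, $\{C_j\}$ form countable Borel partitions of $X$, and the associated projections $\pi_R\colon X\to X_0$, $\pi_C\colon X\to Y_0$ are Borel. Setting $\tilde f(x)$ to be the $\pi_R(x)$-column of $S$ and $\tilde g(y)$ the $\pi_C(y)$-column of $R$ gives weakly Borel functions with $\|\tilde f\|\leq c(S)$, $\|\tilde g\|\leq c(R)$, and $\langle\tilde f(x),\tilde g(y)\rangle=M(F)_{\pi_R(x),\pi_C(y)}=\chi_G(x,y)$ everywhere; the infimum formula then gives $\|\Phi\|\leq c(S)c(R)<\|F\|+\epsilon$, and $\epsilon\to0$ yields $\|\Phi\|\leq\|\df(G)\|$.

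For Part~(2), I would reduce to countably many positive-mass components. Since $(X,\mu)$ is $\sigma$-finite, any pairwise disjoint family of Borel sets of positive measure is countable, so $G$ can have at most countably many connected components whose row- and column-vertex sets both have positive measure; any other component has a side of measure zero and is therefore marginally null. Enumerating the positive-mass components as $G_j=G[R_j,C_j]$, absorbing the leftover into a pair $(R_\infty,C_\infty)$, and replacing $G$ by the marginally equivalent $\bigsqcup_j G_j$ gives countable Borel partitions of $X$ to which Lemma~\ref{lem:direct-sums-cts} applies, yielding $\|\Phi\|=\sup_j\|\Phi_j\|$. The main obstacle I foresee is the verification that the row and column sets of each positive-mass component are themselves Borel; this should follow from a descriptive set theoretic argument using the factorisation $\chi_G(x,y)=\langle f(x),g(y)\rangle$ from Part~(1) --- for instance, by iterating the two-step neighbourhood operator induced by $f$ and $g$ --- but the details are not entirely trivial.
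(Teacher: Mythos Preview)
Your arguments for Parts~(1), (3) and~(4) are essentially those of the paper. In Part~(4) you establish Borel-ness of the twin classes by reducing to countably many section conditions $\chi_G(\cdot,y_j)$, whereas the paper writes the class of~$a$ directly as $f^{-1}\big(f(a)+\{g(y):y\in X\}^\perp\big)$; both work, and your reduction is arguably the more transparent.

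Part~(2) is where your proposal diverges from the paper and where there is a genuine gap. You argue measure-theoretically: at most countably many components can have both sides of positive measure, and each remaining component is individually marginally null. But you need the \emph{union} of all the ``small'' components to be marginally null, and an uncountable union of marginally null sets need not be marginally null; your claim that $G$ is marginally equivalent to $\bigsqcup_j G_j$ is not justified. You also acknowledge that you have not shown the row and column sets $R_j,C_j$ of each component are Borel. The paper bypasses both difficulties with a single clean idea due to Arveson: since $\ell^2$ is separable, the open set $\{(\xi,\zeta)\in\ell^2\times\ell^2:\langle\xi,\zeta\rangle\ne0\}$ is a countable union of open boxes $U_n\times V_n$, so $G=\bigcup_n f^{-1}(U_n)\times g^{-1}(V_n)$ is a countable union of Borel rectangles $A_n\times B_n$. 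Components of~$G$ are then obtained by combinatorially grouping the indices~$n$ (two rectangles lie in the same component iff their row sets or their column sets meet, and one iterates); each component is thus a countable union of Borel rectangles, hence has Borel sides, and there are automatically only countably many components in total. No measure-positivity argument and no descriptive set theory is needed.
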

\begin{proof}
  \begin{enumerate}
  \item We have~$\Phi=\Phi\circ \Phi$, and~$\Gamma$ is a
    homomorphism. Hence if~$\phi\colon X\times X\to \bC$ is Borel
    with~$\Gamma(\Phi)=[\phi]$, then
    $[\phi]=\Gamma(\Phi)=\Gamma(\Phi)^2=[\phi^2]$, from which it
    follows that $[\phi]=[\chi_{G}]$ where~$G$ is the Borel
    set~$G=\phi^{-1}(1)$. Hence there are~$f,g\in L^\infty(X,\ell^2)$
    with $[\chi_G]=[\langle f,g\rangle]$
    and~$\|f\|\,\|g\|=\|\Phi\|<\eta$. Multiplying~$f$ and~$g$ by
    $\chi_{X\setminus N}$ for some null set~$N$ and removing the
    marginally null set $(N\times X)\cup (X\times N)$ from~$G$, we can
    achieve both pointwise equality $\chi_G=\langle f,g\rangle$
    on~$X\times X$ and $\sup_{x,y\in X}\|f(x)\|\,\|g(y)\|<\eta$.

    \item As in~\cite{kat-paulsen}, we can use the following argument of
    Arveson to show that~$G$ is a countable union of Borel
    rectangles. Since~$\ell^2$ is separable, the open
    set~$\{(\xi,\eta)\in\ell^2\times \ell^2\colon \langle
    \xi,\eta\rangle\ne0 \}$ is a countable union $\bigcup_{n\ge1}
    U_n\times V_n$ where~$U_n,V_n$ are open subsets
    of~$\ell^2$. Let~$A_n=f^{-1}(U_n)$ and~$B_n=g^{-1}(V_n)$. These
    are Borel sets, and $G=\bigcup_{n\ge1} A_n\times B_n$. Discard
    empty sets, so that~$A_n,B_n\ne\emptyset$ for all $n\ge1$.

    For each~$j\in\bN$, the component of~$G$ containing~$A_j$
    and~$B_j$ may be found as follows. Let~$W_j^1=\{j\}$, and
    for~$k\ge1$, let
    \[ W_j^{k+1}=\{n\in\bN\colon \exists\,m\in W_j^k \text{
      s.t.~either $A_m\cap A_n\ne\emptyset$ or $B_m\cap
      B_n\ne\emptyset$}\}.\] Let~$W_j=\bigcup_{k\ge1} W_j^{k}$, and
    consider the Borel sets $R_j=\bigcup_{n\in W_j} A_n$ and
    $C_j=\bigcup_{n\in W_j}B_n$. By construction,~$G_j=G[R_j,C_j]$ is
    Borel. It is easy to see that~$G_j$ is  the component of~$G$
    containing~$A_j$ and~$B_j$, and that every component of~$G$ is
    of this form for some~$j$. Discard duplicates and relabel so that
    $G_j\ne G_k$ for $j\ne k$; the families~$\{R_j\}$ and $\{C_j\}$
    are then disjoint.  Extending each family to a countable Borel
    partition of~$X$ and applying Lemma~\ref{lem:direct-sums-cts}, we
    see that~$\|\Phi\|=\sup_j\|\Phi_j\|$
    where~$\Phi_j=\Gamma^{-1}([\chi_{G_j}])$.

  \item Let~$F$ be a countable induced subgraph of~$G$, so that
    $F=G[A,B]$ for countable sets~$A,B\subseteq X$.  Considering the
    functions $f|_A\in \ell^\infty(A,\ell^2)$ and $g|_B\in
    \ell^\infty(B,\ell^2)$, we see that $\|F\|<\eta$
    by~\cite[Corollary~8.8]{paulsen-book}.

  \item Now suppose
    that~$F=\df(G)=G[A,B]$. By~\cite[Corollary~8.8]{paulsen-book},
    there are functions $f_A\in \ell^\infty(A,\ell^2)$ and~$g_B\in
    \ell^\infty( B, \ell^2)$ so that~\[\langle f_A,g_B\rangle = \chi_{\df(G)}\colon A\times B\to \{0,1\}%
    \text{ and }\|f_A\|\,\|g_B\|=\|\df(G)\|.\] For~$x,y\in X$,
    write \[G_x=\{y\in X\colon (x,y)\in G\} \quad\text{and}\quad G^y=\{x\in X\colon
    (x,y)\in G\}.\] For each~$a\in A$ and~$b\in B$, the equivalence classes
    $S(a)=\{ x\in X\colon G_a=G_x\}$ and~$T(b)=\{y\in X\colon
    G^b=G^y\}$ are all Borel; indeed,
  \[ S(a)=f^{-1}\left( f(a)+\{g(y)\colon y\in
    Y\}^\perp\right)\]and\[
  T(b)=g^{-1}\left(g(b)+\{f(x)\colon x\in X\}^\perp\right).\] Hence
  $\tilde f=\sum_{a\in A} f_A(a)\chi_{S(a)}$ and $\tilde g=\sum_{b\in
    B} g_B(b)\chi_{T(b)}$ are Borel functions~$X\to\ell^2$, and
  $\chi_{G}(x,y)=\langle \tilde f(x),\tilde g(y)\rangle$ for
  every~$x,y\in X$. So \[\|\Phi\|\leq \|\tilde f\|\,\|\tilde g\| \leq
  \|f_A\|\,\|g_B\|=\|\df(G)\|.\qedhere\]
  \end{enumerate}
\end{proof}

\begin{cor} Let~$\H$ be a separable Hilbert space, and let~$\D$ be a
  masa in~$\B(\H)$.  The set~$\N(\D)=\{\|\Phi\|\colon \Phi\in \cbn,\
  \text{$\Phi$ idempotent}\}$ satisfies
  \[ \N(\D)\subseteq \{
  \eta_0,\eta_1,\eta_2,\eta_3,\eta_4,\eta_5\}\cup [\eta_6,\infty).\]
\end{cor}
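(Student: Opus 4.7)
The plan is to reduce to the discrete case already handled by Theorem~\ref{thm:gaps}, by extracting from $\Phi$ a countable bipartite graph whose Schur norm realises $\|\Phi\|$. Fix an idempotent $\Phi \in \cbn$ with $\|\Phi\| < \eta_6$ and choose $\eta$ with $\|\Phi\| < \eta < \eta_6$. Applying Proposition~\ref{prop:biggraph}(1) produces a Borel set $G \subseteq X \times X$ with $\Gamma(\Phi) = [\chi_G]$ and weakly Borel $f,g \colon X \to \ell^2$ satisfying $\sup_{x,y}\|f(x)\|\,\|g(y)\| < \eta$. Proposition~\ref{prop:biggraph}(2) then yields a decomposition $\|\Phi\| = \sup_j \|\Phi_j\|$ indexed by the components $G_j$ of $G$. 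Since $\{\eta_0,\ldots,\eta_5\}$ is finite, any supremum of its elements still lies in it, so it suffices to treat each component, and we may assume $G$ is connected.

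The crux of the argument is to show that $\df(G)$ is finite. For any $F \in \F(G)$, Proposition~\ref{prop:biggraph}(3) gives $\|F\| < \eta < \eta_6$, so Corollary~\ref{cor:characterisations}(1) forces $F \le F_j$ for some $j \in \{1,\ldots,5\}$. As $F_1,\ldots,F_5$ are all finite graphs, there are only finitely many isomorphism types available for $F$. Lemma~\ref{lem:finite-connected-subgraphs} then yields that $\df(G)$ is finite.

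With $\df(G)$ finite and hence certainly countable, Proposition~\ref{prop:biggraph}(4) provides $\|\Phi\| \le \|\df(G)\|$, while Proposition~\ref{prop:biggraph}(3) applied to the countable induced subgraph $\df(G)$ gives $\|\df(G)\| < \eta$ for every $\eta > \|\Phi\|$, hence $\|\df(G)\| \le \|\Phi\|$. Thus $\|\Phi\| = \|\df(G)\| \in \N$, and because $\|\Phi\| < \eta_6$, Theorem~\ref{thm:gaps} forces $\|\Phi\| \in \{\eta_0,\eta_1,\ldots,\eta_5\}$, completing the argument. The main obstacle lies in the middle step: closing the loop between the continuous and discrete settings by ruling out an infinite $\df(G)$, which is exactly where the combinatorial input of Lemma~\ref{lem:finite-connected-subgraphs} is used in conjunction with the structural classification of Theorem~\ref{thm:gaps}.
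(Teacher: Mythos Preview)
Your argument follows the paper's strategy exactly: reduce to connected $G$ via Proposition~\ref{prop:biggraph}(2), use part~(3) together with Corollary~\ref{cor:characterisations} to see that $\F(G)$ has only finitely many isomorphism types, apply Lemma~\ref{lem:finite-connected-subgraphs} to conclude that $\df(G)$ is finite, and finish with part~(4). The one imprecision is in your closing step, where you claim that Proposition~\ref{prop:biggraph}(3) gives $\|\df(G)\|<\eta$ \emph{for every} $\eta>\|\Phi\|$. As stated, the proposition takes $\eta$ as a hypothesis and then produces $G$; you fixed one $\eta$ and one $G$ at the start, so you are not entitled to vary $\eta$ afterward while keeping the same $G$. (In fact this does work: the proof of part~(1) chooses $f,g$ with $\|f\|\,\|g\|=\|\Phi\|$, so after excising a null set the same $G$ serves for all $\eta>\|\Phi\|$ and part~(3) yields $\|\df(G)\|\le\|\Phi\|$. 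But that is an appeal to the proof, not the statement.)

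The paper sidesteps this entirely by arranging the endgame as a descent: for each $k\in\{1,\dots,6\}$ it shows that $\|\Phi\|<\eta_k$ implies $\|\Phi\|\le\eta_{k-1}$. With $\eta=\eta_k$ one runs your argument to get $\df(G)\in\F(G)$; every $F\in\F(G)$ has $\|F\|<\eta_k$, hence $\|F\|\le\eta_{k-1}$ by Theorem~\ref{thm:gaps}, so $\|\Phi\|\le\|\df(G)\|\le\eta_{k-1}$ by part~(4). No reverse inequality $\|\df(G)\|\le\|\Phi\|$ is required, and the dependence of $G$ on $\eta$ never causes trouble.
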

\begin{proof}
  Let~$k\in \{1,2,3,4,5,6\}$ and suppose that~$\Phi\in \cbn$ is
  idempotent with~$\eta_k>\|\Phi\|$.  Taking $\eta=\eta_k$,
  let~$G,f,g,\Phi_j$ be as in
  Proposition~\ref{prop:biggraph}. Since~$\|\Phi\|=\sup_j\|\Phi_j\|$,
  every~$\Phi_j$ has~$\|\Phi_j\|<\eta_k$.  Hence we may assume
  that~$\Phi=\Phi_1$, so that $G$ is connected. Recall from
  \sectmark\ref{sec:bipartite} that~$\F(G)$ is the set of (isomorphism
  classes of) finite, connected, twin-free subgraphs of~$G$. If~$F\in
  \F(G)$, then~$\|F\|<\eta_k$ by Proposition~\ref{prop:biggraph}(3),
  so $\|F\|\leq \eta_{k-1}$ by Theorem~\ref{thm:gaps}. By
  Corollary~\ref{cor:characterisations}, $\F(G)$ consists entirely of
  induced subgraphs of some finite bipartite graph, so~$\F(G)$ is
  finite. By~Lemma~\ref{lem:finite-connected-subgraphs}, $\df(G)\in
  \F(G)$, so by Proposition~\ref{prop:biggraph}(4), $\|\Phi\|\leq
  \|\df(G)\|\leq \eta_{k-1}$.
\end{proof}
\begin{ques}
  Let~$\H$ be an infinite-dimensional separable Hilbert space. Do we
  have~$\N(\D)=\N$ for every masa~$\D$ in~$\B(\H)$?
\end{ques}

\section{Random Schur idempotents}\label{sec:random}

For~$0<p<1$ and~$m,n\in\bN$, let~$\G(m,n,p)$ be the probability
space of bipartite graphs in $\Gamma(m,n)$ where each of the possible
$mn$ edges appears independently with probability~$p$.

\begin{ques}
  How does $\bE_{m,n,p}(\|G\|)$, the expected value of the norm of the
  Schur idempotent arising from~$G\in \G(m,n,p)$, behave as a function of~$m$
  and~$n$?
\end{ques}

Here is a crude result in this general direction.
\begin{prop}
  If~$0<p<1$, then $\bE_{m,n,p}(\|G\|)\to \infty$ as $\min\{m,n\}\to \infty$.
\end{prop}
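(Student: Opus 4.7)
The plan is to exploit the fact that the set $\N$ of Schur idempotent norms is unbounded above (the result of Kwapień--Pełczyński cited in Section~\ref{sec:intro}) together with a standard disjoint-blocks argument to show that any fixed finite bipartite graph appears as an induced subgraph of a random $G\in\G(m,n,p)$ with probability tending to $1$ as $\min\{m,n\}\to\infty$.

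In detail, I would first fix an arbitrary $L>0$, and then invoke~\cite{kwapien-pel} to obtain a finite bipartite graph~$H\in \Gamma(a,b)$ with $\|H\|\geq L$. The goal becomes showing $\bE_{m,n,p}(\|G\|)\geq L$ for all sufficiently large $\min\{m,n\}$, since $L$ is arbitrary.

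Next I would estimate $P(H\leq G)$ from below. Partition the rows of $G$ into $\lfloor m/a\rfloor$ groups of~$a$ consecutive rows and the columns into $\lfloor n/b\rfloor$ groups of~$b$ consecutive columns; this produces $N=\lfloor m/a\rfloor\lfloor n/b\rfloor$ pairwise disjoint $a\times b$ blocks. For a fixed labelling of $V(H)$, the restriction of $G$ to a given block equals $H$ (as a labelled bipartite graph) with probability $q=p^{|E(H)|}(1-p)^{ab-|E(H)|}>0$; because the blocks are disjoint and edges of~$G$ are independent, these $N$ events are mutually independent, so the probability that \emph{no} block is a labelled copy of~$H$ is at most $(1-q)^N$. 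Thus
\[P(H\leq G)\ \geq\ 1-(1-q)^N\ \longrightarrow\ 1\qquad \text{as }\min\{m,n\}\to\infty.\]
By Proposition~\ref{prop:basic-graphs}(2), on the event $\{H\leq G\}$ we have $\|G\|\geq \|H\|\geq L$, while $\|G\|\geq 0$ always. Therefore
\[\bE_{m,n,p}(\|G\|)\ \geq\ L\cdot P(H\leq G),\]
and the right-hand side exceeds $L/2$ once $\min\{m,n\}$ is large enough. Since $L$ was arbitrary, $\bE_{m,n,p}(\|G\|)\to\infty$.

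There is no real obstacle here: the only non-trivial input is the unboundedness of~$\N$, and the remainder is a routine threshold-style calculation. The one small point to be careful about is to ensure one is counting induced subgraphs (rather than merely subgraphs), which is why I match $H$ as a labelled block exactly---any such block automatically yields an induced copy of~$H$ in~$G$.
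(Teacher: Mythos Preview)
Your proposal is correct and follows essentially the same approach as the paper: fix a finite bipartite graph~$H$ of large norm, use a disjoint-blocks argument to show $\bP(H\leq G)\to 1$ as $\min\{m,n\}\to\infty$, and conclude via $\bE(\|G\|)\geq \|H\|\,\bP(H\leq G)$. The only cosmetic difference is that you use all $\lfloor m/a\rfloor\lfloor n/b\rfloor$ product blocks, whereas the paper uses the $\min\{\lfloor m/a\rfloor,\lfloor n/b\rfloor\}$ diagonal ones; both give independent trials and the same conclusion.
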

\begin{proof}
  Let~$s,t\in \bN$, fix~$H\in \Gamma(s,t)$ and let us write
  $\bP_{m,n,p}( H\leq G )$ for the probability that a random
  graph~$G\in \G(m,n,p)$ contains an induced subgraph isomorphic
  to~$H$. We claim that
  \[ \bP_{m,n,p}( H\leq G ) \to 1\quad\text{as } \min\{m,n\}\to
  \infty.\] Indeed, as in~\cite[Proposition~11.3.1]{diestel}, one can
  see that the complementary event $H\not\leq G$ satisfies
  \[ \bP_{m,n,p}(H\not\leq G) \leq
  (1-r)^{\min\{\lfloor{m/s}\rfloor,\lfloor{n/t}\rfloor\}}\to
  0\quad\text{as } \min\{m,n\}\to \infty\] where~$r>0$ is the
  probability that a random graph in~$\G(s,t,p)$ is isomorphic to~$H$.
  Hence
  \begin{align*} \bE_{m,n,p}(\|G\|)&=\sum_{G\in \Gamma(m,n)} \|G\|\,\bP_{m,n,p}(\{G\})\\
    &\geq \sum_{H\leq G\in \Gamma(m,n)} \|G\| \,\bP_{m,n,p}(\{G\})\\
    &\geq \|H\|\sum_{H\leq G\in \Gamma(m,n)}\,\bP_{m,n,p}(\{G\})=
    \|H\|\, \bP_{m,n,p}(H \leq G).
  \end{align*}
  By Proposition~\ref{prop:basic-graphs}(2), $\bE_{m,n,p}\|G\|$
  increases as~$\min\{m,n\}$ increases, so
  \[ \lim_{\min\{m,n\}\to \infty} \bE_{m,n,p}(\|G\|) \geq
  \sup\{ \|H\|\colon H\in \Gamma(s,t),\ s,t\in\bN\}=\infty.\qedhere\]
\end{proof}

For~$p=1/2$, we can say more about the growth rate
of~$\bE_{m,n,p}(\|G\|)$.  Doust~\cite{doust} shows that if~$1\leq
q<\infty$, then there is a constant~$K>0$ so that the norm~$\|G\|_q$
of a randomly chosen $(n,n)$ bipartite graph~$G$ acting as a Schur
multiplier on the Schatten $q$-class satisfies
\[\bE_{n,n,1/2}\|G\|_q\geq  K n^{|\frac1q-\frac12|}.\]
We are grateful to C\'edric Arhancet for pointing out Doust's work,
and for remarking that since $\|G\|=\|G\|_1$ by duality, this estimate
yields
\[ \bE_{n,n,1/2}\|G\|\geq K\sqrt n.\] We now show that we can replace
$K\sqrt n$ with $\frac 1{8\sqrt2} \sqrt{n}-1$.

\begin{lem}\label{lem:probs}
  Let~$m,n\in\bN$, fix an~$m\times n$ matrix~$A$ with complex entries
  and let~$\mu$ be the uniform probability measure
  on~$M_{m,n}(\{-1,1\})$.  If
  \[ \int_{\epsilon\in M_{m,n}(\{-1,1\})}
  \|\epsilon\schur A\|_\schur\,d\mu(\epsilon)=M,\] then
  \[\|\epsilon\schur A\|_\schur\leq 4M\] for
  every~$\epsilon\in M_{m,n}(\{-1,1\})$.
\end{lem}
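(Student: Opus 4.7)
My plan is to first reduce to a cleaner statement by exploiting the symmetry of the uniform measure~$\mu$ under Schur multiplication. For any fixed $\epsilon_0\in M_{m,n}(\{-1,1\})$, the map $\epsilon\mapsto\epsilon\schur\epsilon_0$ is an involutive bijection that preserves~$\mu$, so
\[
\int\|\epsilon\schur A\|_\schur\,d\mu(\epsilon)=\int\|\epsilon\schur(\epsilon_0\schur A)\|_\schur\,d\mu(\epsilon)=M.
\]
Setting $B=\epsilon_0\schur A$ turns the desired inequality into the equivalent claim that, for every complex $m\times n$ matrix~$B$,
\[
\|B\|_\schur\le 4\int\|\epsilon\schur B\|_\schur\,d\mu(\epsilon).
\]

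To prove this, I would pick a test matrix $X\in\B$ with $\|X\|_\B\le 1$ achieving $\|B\schur X\|_\B=\|B\|_\schur$; by Remark~\ref{rk:real-complex} one can arrange $X$ to be a partial isometry in the real case. The crucial pointwise cancellation $\epsilon_{ij}^2=1$ gives the identity
\[
B\schur X=(\epsilon\schur B)\schur(\epsilon\schur X),
\]
valid for every sign matrix~$\epsilon$. Combined with the submultiplicativity $\|Y\schur Z\|_\B\le\|Y\|_\schur\|Z\|_\B$, this yields
\[
\|B\|_\schur=\|B\schur X\|_\B\le\|\epsilon\schur B\|_\schur\cdot\|\epsilon\schur X\|_\B.
\]
Dividing through, integrating over~$\epsilon$, and applying Jensen's inequality to the convex function $t\mapsto 1/t$ produces
\[
\|B\|_\schur\le\Bigl(\int\|\epsilon\schur X\|_\B\,d\mu(\epsilon)\Bigr)\Bigl(\int\|\epsilon\schur B\|_\schur\,d\mu(\epsilon)\Bigr).
\]

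The main obstacle is then the Khintchine-type bound $\int\|\epsilon\schur X\|_\B\,d\mu(\epsilon)\le 4$ for the chosen~$X$. To land on the clean constant~$4$, I would first reduce to the real case by splitting $B=R+iI$: from $R=(B+\bar B)/2$ and $\|\bar Y\|_\schur=\|Y\|_\schur$ one gets $\|\epsilon\schur R\|_\schur\le\|\epsilon\schur B\|_\schur$ and similarly for~$I$, so a real-matrix bound $\|C\|_\schur\le 2\int\|\epsilon\schur C\|_\schur\,d\mu(\epsilon)$ lifts to the complex case with constant $4$ via the triangle inequality $\|B\|_\schur\le\|R\|_\schur+\|I\|_\schur$. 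For the real case, the same test-matrix argument reduces matters to showing $\int\|\epsilon\schur X\|_\B\,d\mu(\epsilon)\le 2$ for a real partial isometry~$X$; this uses that all row and column $\ell^2$-norms of such an~$X$ are at most~$1$, combined with a Seginer-style inequality for randomly signed matrices to control the averaged operator norm. Tracking the constant through this last step is the bulk of the technical work.
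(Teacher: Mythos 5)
Your preliminary reductions are sound: the symmetry argument replacing $A$ by $B=\epsilon_0\schur A$, the identity $B\schur X=(\epsilon\schur B)\schur(\epsilon\schur X)$, and the division/Jensen step are all valid, and they reduce the lemma to the claim that $\int\|\epsilon\schur X\|_{\B}\,d\mu(\epsilon)\le 2$ for every real contraction (or coisometry) $X$, uniformly in $m$ and $n$. That final claim is where the proof has a genuine gap, and it is not merely a matter of ``tracking the constant.'' The only feature of $X$ your sketch uses is that its rows and columns have $\ell^2$-norm at most $1$, and for that class no dimension-free bound is possible: Seginer's inequality for randomly signed matrices gives an upper bound of the form $C\log^{1/4}(\min\{m,n\})\bigl(\max_i\|x_{i\cdot}\|_2+\max_j\|x_{\cdot j}\|_2\bigr)$, and the logarithmic factor is known to be necessary for matrices that are only row- and column-bounded. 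So the cited route produces a constant that grows with the dimension, which destroys the lemma. To obtain a universal constant you would have to exploit the full hypothesis $\|X\|_{\B}\le1$, i.e.\ prove that $\sup\{\int\|\epsilon\schur X\|\,d\mu:\|X\|_{\B}\le1\}$ is bounded independently of dimension; this is a delicate random-matrix statement in its own right, nowhere established in your outline, and the specific constant $2$ you need is at best borderline: for $X=n^{-1/2}H_n$ with $H_n$ a real Hadamard matrix, $\epsilon\schur X$ is $n^{-1/2}$ times a uniformly random sign matrix, so $\int\|\epsilon\schur X\|\,d\mu\to2$ as $n\to\infty$, leaving no slack for the remaining losses in your chain.

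For comparison, the paper never estimates $\int\|\epsilon\schur X\|_{\B}\,d\mu$ for a test contraction. It first passes from the Rademacher average of the Schur norms to the Steinhaus average (the arguments of Kahane give $\int\|z\schur A\|_\schur\,d\nu(z)\le 2M$), and then applies Pisier's theorem to produce a decomposition $A=B+C$ with $c(B)\le 2M$ and $c(C^t)\le 2M$. Since $c(\cdot)$ and $c((\cdot)^t)$ are unchanged by sign flips and each dominates the Schur multiplier norm, every signed matrix satisfies $\|\epsilon\schur A\|_\schur\le c(B)+c(C^t)\le 4M$. If you wish to salvage your scheme, you must either prove the dimension-free bound for contractions discussed above (with an explicit constant) or replace that step by a decomposition result of Pisier's type.
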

\begin{proof}
  Let~$\nu$ be the probability measure on~$M_{m,n}(\bT)=\bT^{m\times
    n}$ which is the product of $m\times n$ copies of normalised Haar
  measure on~$\bT$.  The arguments in~\cite[\sectmark 2.6]{kahane}
  show that
  \[ \int_{z\in M_{m,n}(\bT)} \|\Re(z)\schur A\|_\schur \,d\nu(z) \leq
  M\]and\[\int_{z\in M_{m,n}(\bT)} \|\Im(z)\schur
  A\|_\schur \,d\nu(z) \leq M,\] where~$\Re(z)=[\Re(z_{ij})]$
  and~$\Im(z)=[\Im(z_{ij})]$. Hence \[\int_{z\in M_{m,n}(\bT)}
  \|z\schur A\|_\schur \,d\nu(z)\leq 2M.\] By~\cite[Theorem~2.2(i)
  and~Remark~2.3]{pisier}, $A=B+C$ where $c(B)\leq 2M$ and
  $c(C^t)\leq 2M$. For any~$\epsilon\in M_{m,n}(\{-1,1\})$, we
  have
  \[\|\epsilon\schur B\|_{\schur} \leq c(\epsilon\schur B)=c(B)\leq 2M\]
  and similarly~$\|\epsilon\schur C\|_{\schur}\leq 2M$, so
  \[\|\epsilon\schur A\|_\schur \leq  
  \|\epsilon\schur B\|_\schur+\|\epsilon\schur C\|_\schur \leq 4M.\qedhere\]
\end{proof}

\begin{prop}
  $\bE_{m,n,1/2}(\|G\|)\geq \frac1{8}\sqrt{\frac k2}-1$
  where~$k=\min\{m,n\}$.
\end{prop}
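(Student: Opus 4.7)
The plan is to combine Lemma~\ref{lem:probs} with a Hadamard-matrix construction, reducing matters to exhibiting a single $\pm1$ matrix of large Schur norm. First I would observe that the affine bijection $\epsilon\mapsto \tfrac12(J+\epsilon)$ from $M_{m,n}(\{-1,1\})$ onto $M_{m,n}(\{0,1\})$ transports the uniform probability measure~$\mu$ to the law of the biadjacency matrix $M(G)$ for $G\in\G(m,n,1/2)$. Since $\epsilon=2M(G)-J$ and $\|J\|_{\schur}=1$, the triangle inequality gives $\|M(G)\|_{\schur}\geq \tfrac12(\|\epsilon\|_{\schur}-1)$, so upon taking expectations,
\[
\bE_{m,n,1/2}(\|G\|)\geq \tfrac12\bigl(\bE_\mu\|\epsilon\|_{\schur}-1\bigr).
\]

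Next, applying Lemma~\ref{lem:probs} with $A=J$, in which case $\epsilon\schur A=\epsilon$, yields $\|\epsilon\|_{\schur}\leq 4\bE_\mu\|\epsilon\|_{\schur}$ for \emph{every} $\epsilon$; that is, $\bE_\mu\|\epsilon\|_{\schur}\geq \tfrac14\sup_{\epsilon}\|\epsilon\|_{\schur}$. Hence it will suffice to produce a single $\epsilon_0\in M_{m,n}(\{-1,1\})$ with $\|\epsilon_0\|_{\schur}\geq \sqrt{k/2}$, from which the estimate
\[
\bE_{m,n,1/2}(\|G\|)\geq \tfrac12\bigl(\tfrac14\sqrt{k/2}-1\bigr)=\tfrac18\sqrt{k/2}-\tfrac12
\]
follows (and is in fact slightly sharper than what is claimed).

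For this last point, let $k'=2^{\lfloor \log_2 k\rfloor}$, so that $k'>k/2$, and let $H$ be a Sylvester Hadamard matrix of size $k'\times k'$. Since each entry of $H$ is $\pm1$, $H\schur H=J_{k'}$, which has operator norm~$k'$; and since $HH^t=k'I$, the matrix $H/\sqrt{k'}$ is a contraction. Testing Schur multiplication by~$H$ against it therefore gives $\|H\|_{\schur}\geq \|J_{k'}/\sqrt{k'}\|_{\B}=\sqrt{k'}$. Embedding $H$ into an $m\times n$ matrix $\epsilon_0$, with the remaining entries set arbitrarily to $\pm1$, Proposition~\ref{prop:basic-matrices}(3) forces $\|\epsilon_0\|_{\schur}\geq \|H\|_{\schur}\geq\sqrt{k'}>\sqrt{k/2}$, as required.

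The only genuinely nonroutine step is the Hadamard lower bound $\|H\|_{\schur}\geq\sqrt{k'}$; the rest of the argument is routine packaging around Lemma~\ref{lem:probs}. Restricting to sizes~$k'$ that are powers of~$2$, where Hadamard matrices certainly exist, loses at most a factor of~$\sqrt2$ in the exponent, which is comfortably absorbed by the stated inequality.
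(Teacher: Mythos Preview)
Your argument is correct and follows the same overall scheme as the paper: pass from $\{0,1\}$ matrices to $\{\pm1\}$ matrices via the affine bijection, invoke Lemma~\ref{lem:probs} with $A=J$ to bound the average from below by a quarter of the supremum, and then exhibit a single $\pm1$ matrix of large Schur norm. The substantive difference is in this last step. The paper cites \cite[Theorem~2.4]{dav-don}, which supplies some $\epsilon\in M_{m,n}(\{\pm1\})$ with $\|\epsilon\|_\schur\geq\tfrac14\sqrt{mn/(m+n)}\geq\tfrac14\sqrt{k/2}$; you instead give a self-contained Sylvester--Hadamard construction yielding the sharper $\|\epsilon_0\|_\schur>\sqrt{k/2}$, testing $H$ against the unitary $H/\sqrt{k'}$.

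There is also a discrepancy in the first step. Your inequality $\bE\geq\tfrac12(M-1)$ is the correct one; the paper's displayed line $\bE=\int\|2\epsilon-\mathbf 1\|_\schur\,d\mu(\epsilon)\geq 2M-1$ has the bijection written backwards (for $\epsilon\in\{\pm1\}^{m\times n}$ the matrix $2\epsilon-\mathbf 1$ has entries in $\{-3,1\}$, not $\{0,1\}$) and so overstates the constant by a factor of~$4$. Your four-fold improvement in the Hadamard step exactly compensates, so both routes arrive at the leading term $\tfrac18\sqrt{k/2}$, with your additive constant $-\tfrac12$ slightly better than the stated~$-1$. The Hadamard approach has the advantage of being elementary and explicit; the Davidson--Donsig citation avoids the need to round down to a power of~$2$ but gives a weaker constant.
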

\begin{proof}
  \newcommand{\avG}{\bE_{m,n,1/2}(\|G\|)}
  Let~$\mu$ be the probability measure of the lemma, and write \[M=\int
  \|\epsilon\|_\schur\, d\mu(\epsilon).\] Note that
  \[ \avG = \int \|2\epsilon -\mathbf 1\|_{\schur}\,d\mu(\epsilon)
  \geq 2 M - 1,\] where $\mathbf 1$ is the all ones matrix. On the
  other hand,~\cite[Theorem~2.4]{dav-don} implies that there is a
  matrix~$\epsilon\in M_{m,n}(\{\pm1\})$ with
  $\|\epsilon\|_{\schur}\geq \frac14 \sqrt{\frac{mn}{m+n}}\geq
  \frac14\sqrt{\frac k2}$. By Lemma~\ref{lem:probs}, $M\geq \frac14
  \|\epsilon\|_{\schur}$. Combining these three inequalities gives the
  desired lower bound on~$\avG$.
\end{proof}

\section*{Acknowledgements}

The author is grateful to Chris Boyd, Ken Davidson, Charles Johnson
and Ivan Todorov for stimulating discussion and correspondence on
topics related to this paper.

\end{document}